\edef\Gin@extensions{\Gin@extensions,.mps}
\tikzset{
	column sep/.code=\def\pgfmatrixcolumnsep{\pgf@matrix@xscale*(#1)},
	row sep/.code   =\def\pgfmatrixrowsep{\pgf@matrix@yscale*(#1)},
	matrix xscale/.code=%
	\pgfmathsetmacro\pgf@matrix@xscale{\pgf@matrix@xscale*(#1)},
	matrix yscale/.code=%
	\pgfmathsetmacro\pgf@matrix@yscale{\pgf@matrix@yscale*(#1)},
	matrix scale/.style={/tikz/matrix xscale={#1},/tikz/matrix yscale={#1}}}
\def\pgf@matrix@xscale{1}
\def\pgf@matrix@yscale{1}
\newtheorem{theorem}{Theorem}
\newtheorem{lemma}{Lemma}[section]
\newtheorem{proposition}{Proposition}[section]
\newtheorem*{claim*}{Claim}
\newtheorem{problem}{Problem}%
\newtheorem{assumption}{Assumption}%
\newtheorem*{theorem*}{Theorem}
\newtheorem*{corollary*}{Corollary}
\theoremstyle{definition}
\newtheorem{definition}{Definition}[section]
\newtheorem{example}{Example}
\newtheorem{notation}{Notation}
\algnewcommand\algorithmicinput{\textbf{Input:}}
\algnewcommand\INPUT{\item[\algorithmicinput]}
\algnewcommand\algorithmicoutput{\textbf{Output:}}
\algnewcommand\OUTPUT{\item[\algorithmicoutput]}
\algnewcommand\algorithmicproc{\textbf{Procedure:}}
\algnewcommand\PROCEDURE{\item[\algorithmicproc]}
\algnewcommand\algorithmiccomplexity{\textbf{Complexity:}}
\algnewcommand\COMPLEXITY{\item[\algorithmiccomplexity]}
\newlength{\continueindent}
\newcommand*{\ALG@customparshape}{\parshape 2 \leftmargin \linewidth \dimexpr\ALG@tlm+\continueindent\relax \dimexpr\linewidth+\leftmargin-\ALG@tlm-\continueindent\relax}
\apptocmd{\ALG@beginblock}{\ALG@customparshape}{}{\errmessage{failed to patch}}
\theoremstyle{remark}
\newtheorem{remark}{Remark}
\theoremstyle{observation}
\definecolor{DarkBlue}{rgb}{0,0.1,0.55}
\numberwithin{equation}{section}
\newcommand {\hide}[1]{}
\newcommand {\sign} {\mbox{\bf sign}}
\newcommand{\zero}{\mbox{\bf zero}}
\newcommand {\junk}[1]{}
\newcommand {\R} {\mathrm{R}}
\newcommand {\SI}         {\mbox{\rm SIGN}}
\newcommand {\RR} {{\mathcal R}}
\newcommand {\Der} {{\rm Der}}
\newcommand {\T}      {\mbox{\rm T}}
\newcommand{\card}{\mathrm{card}}
\def\addots{\mathinner{\mkern1mu
		\raise1pt\vbox{\kern7pt\hbox{.}}
		\mkern2mu\raise4pt\hbox{.}\mkern2mu
		\raise7pt\hbox{.}\mkern1mu}}
\DeclareMathOperator{\trace}{Tr}
\DeclareMathOperator{\vectorize}     {vec}
\DeclareMathOperator{\column} {Col}
\DeclareMathOperator{\tdeg} {tdeg}
\DeclareMathOperator{\real} {Re}
\DeclareMathOperator{\imag} {Im}
\DeclareMathOperator{\content} {Cont}
\newcommand{\x}{\mathbf{x}}
\newcommand{\y}{\mathbf{y}}
	\title[On the complexity of analyticity in SDO]
	{
		On the complexity of analyticity in semi-definite optimization
	}
        \author{Saugata Basu}
	\address{Department of Mathematics, 
		Purdue University, West Lafayette, IN 47905, U.S.A.}
	\email{sbasu@purdue.edu}
	\author{Ali Mohammad-Nezhad}
	\address{Department of Mathematical Sciences, 
		Carnegie Mellon University, Pittsburgh, PA 15123, U.S.A.}
	\email{anezhad@andrew.cmu.edu}
\begin{document}
\begin{abstract}
It is well-known that the central path of semi-definite optimization, unlike linear optimization, has no analytic extension to $\mu = 0$ in the absence of the strict complementarity condition. In this paper, we show the existence of a positive integer $\rho$ by which the reparametrization $\mu \mapsto \mu^{\rho}$ recovers the analyticity of the central path at $\mu = 0$. We investigate the complexity of computing $\rho$ using algorithmic real algebraic geometry and the theory of complex algebraic curves. We prove that the optimal $\rho$ is bounded by $2^{O(m^2+n^2m+n^4)}$, where $n$ is the matrix size and $m$ is the number of affine constraints. Our approach leads to a symbolic algorithm, based on the Newton-Puiseux algorithm, which computes a feasible $\rho$ using $2^{O(m+n^2)}$ arithmetic operations.
\end{abstract}
\subjclass[2020]{Primary 14P10; Secondary 90C22, 90C51}

\keywords{Semi-definite optimization, central path, real univariate representation, quantifier elimination, Newton-Puiseux theorem}

\maketitle
\tableofcontents

\section{Introduction}\label{intro}
\subsection{Semi-definite optimization} We denote by $\mathbb{S}^n$ the inner product space of 
$n \times n$ symmetric matrices 
with entries in $\mathbb{R}$
with the inner product $\langle C, X \rangle =\trace(CX)$. A pair of primal-dual \textit{semi-definite optimization} (SDO) problems is defined as 
 \begin{align*}
&(\mathrm{P}) \qquad v_{p}^*:=\inf_{X \in \mathbb{S}^n} \big \{\langle C,  X \rangle \mid \langle A^{i} , X \rangle=b_i, \quad  i=1,\ldots, m, \ X \succeq 0 \big \},\\
&(\mathrm{D}) \qquad v^*_d:=\sup_{(y,S) \in \mathbb{R}^m \times \mathbb{S}^n} \Big \{b^Ty  \mid  \sum_{i=1}^m y_i A^i+S=C, \ S \succeq 0, \ y \in \mathbb{R}^m \Big \},
 \end{align*}
where $A^i \in \mathbb{S}^n$ for $i=1,\ldots, m$, $C \in \mathbb{S}^n$, $b \in \mathbb{R}^m$, and $\succeq 0$ means positive semi-definite. A primal-dual vector $(X,y,S)$ is called a \textit{solution} if $(X,y,S) \in \mathrm{Sol(P)} \times \mathrm{Sol(D)}$, where 
\begin{align*}
&\mathrm{Sol(P)}:=\Big\{X \mid  \langle A^{i}, X \rangle=b_i, \quad  i=1,\ldots, m, \ \ X \succeq 0, \ \ \langle C,  X \rangle = v^*_p  \Big\}, \\
&\mathrm{Sol(D)}:=\bigg\{(y,S) \mid \sum_{i=1}^m y_i A^i+S=C, \ \ S \succeq 0,  \ b^Ty=v^*_d \bigg\}.
\end{align*}
\begin{notation}\label{matrix_vector_form}
We adopt the notation $(\cdot,\cdot,\cdots,\cdot)$ to represent vectors or side by side arrangement of matrices. We sometimes identify a symmetric matrix $X=(X_{ij})_{n \times n}$ with a column vector $x$ by stacking the columns of the matrix on the top of each other, i.e., we use the vector isomorphism 
\begin{equation}\label{matrix_identification}
\begin{aligned}
\vectorize&: \mathbb{S}^{n} \to \mathbb{R}^{n^2}\\
X &\mapsto \big(X_{11},\ldots, X_{1n}, X_{21}, \ldots,X_{2n},\ldots,X_{n1},\ldots, X_{nn}\big)^T.
\end{aligned}
\end{equation}
Following this notation, we also define $\mathbf{A}\!:=\big(\vectorize(A^1),\ldots,\vectorize(A^m)\big)^T$. 
\end{notation}

\subsection{Central path}
SDO problems can be solved ``efficiently" using path-following interior point methods (IPMs)~\cite{NN94}, where the \textit{central path} plays a prominent role. The central path is an analytic semi-algebraic function $\xi(\mu)\!:\!(0,\infty) \to \mathbb{S}^{n} \times \mathbb{R}^m \times \mathbb{S}^n$ whose graph $(\mu,\xi(\mu))$ satisfies  
\begin{align}\label{central_path}
\big\{\mathbf{A} \vectorize(X)&=b, \ \mathbf{A}^T y + \vectorize(S-C)=0, \ \vectorize(XS-\mu I_n)=0, \ X,S \succ 0\big\},
\end{align}
where $I_n$ is the identity matrix of size $n$, and $\succ 0$ means positive definite. For every fixed positive $\mu$, following Notation~\ref{matrix_vector_form}, there exists a unique $(X(\mu),y(\mu),S(\mu))$, so-called a \textit{central solution}, satisfying~\eqref{central_path}~\cite[Theorem~3.1]{Kl02}. The analyticity of the central path is immediate from the application of the analytic implicit function theorem to the nonsingular Jacobian at a central solution~\cite[Theorem~3.3]{Kl02}. Due to its semi-algebraicity~\cite[Proposition~3.18]{BPR06} and the boundedness of $\xi\mid_{(0,1]}$~\cite[Lemma~3.2]{Kl02}, 
 the central path converges%
\footnote{The first proof of convergence was given in~\cite[Theorem~A.3]{HKR02} based on the curve selection lemma~\cite[Lemma~3.1]{M68}.} as $\mu \downarrow 0$~\cite[Theorem~A.3]{HKR02} to a solution $(X^{**},y^{**},S^{**})$ in the relative interior of the solution set~\cite[Lemma~4.2]{GS98}.
\begin{definition}\label{maximal_strict_comp}
A solution $(X,y,S)$ is called \textit{strictly complementary} if $X+S \succ 0$. The \textit{strict complementarity condition} is said to hold if there exists a strictly complementary solution for $(\mathrm{P})-(\mathrm{D})$.
\end{definition} 

\vspace{5px}
\noindent
The following assumption is made throughout to guarantee the existence of the central path. This also guarantees that $\mathrm{Sol(P)} \times \mathrm{Sol(D)}$ is nonempty and compact.
\begin{assumption}\label{IND_IPC}
The matrices $A^i$ for $i=1,\ldots,m$ are linearly independent, and there exists a feasible $(X,y,S)$ such that $X,S \succ 0$.
\end{assumption}

\vspace{5px}
\noindent
The central path in case of SDO is analytic at $\mu = 0$ if and only if the limit point of the central path is strictly complementary. Unfortunately, the failure of analyticity impairs the convergence rate of path-following IPMs, see e.g.,~\cite{WW10}. This problem has been extensively studied for linear optimization, linear complementarity problems, and also SDO in the presence of the strict complementarity condition. We will survey some of these results in Section~\ref{survey}. Our paper addresses the case for SDO regardless of the strict complementarity condition.  

\begin{notation}\label{vector_form}
Using the identification~\eqref{matrix_identification}, the limit point $(X^{**},y^{**},S^{**})$ and a central solution $(X(\mu),y(\mu),S(\mu))$ are identified by vectors $v^{**}:=(x^{**};y^{**};s^{**})$ and $v(\mu):=(x(\mu);y(\mu);s(\mu))$, respectively. Furthermore, the coordinates of the central path are denoted by $v_i(\mu)$ for $i=1,\ldots,\bar{n}$, where 
\begin{align*}
\bar{n}:=m+2n^2.
\end{align*}
\end{notation}
\begin{notation}\label{Intermediates_Notation}
The indeterminates of the polynomials defining the semi-algebraic set~\eqref{central_path} are denoted by $V_1,\ldots, V_{\bar{n}}$.
\end{notation}

\section{Main results}
In this paper, we explore the issue of analyticity in the absence of the strict complementarity condition. Our main motivation behind studying the analyticity of the central path is the following key question, as originally stated in~\cite[Page~519]{PS04}. This is the analog of the problem on the central path in the case of linear complementarity problems~\cite{SW98,SW99}.

\begin{problem}\label{CP_Analytic_Extension}
Does there exist an integer $\rho > 0$ such that $\xi(\mu^{\rho})$ is analytic at $\mu = 0$?
\end{problem}
Here, we provide an affirmative answer to Problem~\ref{CP_Analytic_Extension} and investigate the complexity of computing a feasible $\rho$ by means of a symbolic procedure.
\subsection{Upper bound on the ramification index}
 For the purpose of complexity analysis, we assume the integrality of the data in $(\mathrm{P})-(\mathrm{D})$.
\begin{assumption}\label{integral_data}
The entries in $A^i$ for $i=1,\ldots,m$, $b$, and $C$ are all integers.
\end{assumption}
\begin{notation}\label{Input_bitsizes}
A bound on the bitsizes of the entries of $A^i$, $C$, $b$ is denoted by $\tau$.
\end{notation}
\noindent
In Section~\ref{Puiseux_Expansion}, we show that $\rho$ attains its optimal value at the least common multiple of the ramification indices of the Puiseux expansions of $v_i(\mu)$, which we denote by $q$, see Notation~\ref{vector_form}. As a consequence, we show that $\rho \in \mathbb{Z}_{+} q$, i.e., any positive integer multiple ($\ge 1$) of the ramification index would be also a feasible solution to Problem~\ref{CP_Analytic_Extension}. In particular, we prove an upper bound on the optimal $\rho$.
\begin{theorem}\label{thm_reparametrization}
The optimal $\rho$ is bounded by $2^{O(m^2+n^2m+n^4)}$.
\end{theorem}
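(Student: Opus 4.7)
The plan is to combine the identification (to be established in Section~\ref{Puiseux_Expansion}) of the optimal $\rho$ with the least common multiple $q=\mathrm{lcm}(q_1,\ldots,q_{\bar n})$ of the ramification indices $q_i$ of the Puiseux expansions of the coordinates $v_i(\mu)$ at $\mu=0$, with an effective degree bound on each $q_i$ coming from elimination theory. Since
\[
q \;\leq\; \prod_{i=1}^{\bar n} q_i \;\leq\; \bigl(2^{O(\bar n)}\bigr)^{\bar n} \;=\; 2^{O(\bar n^{2})} \;=\; 2^{O(m^2+n^2m+n^4)}
\]
upon substituting $\bar n = m+2n^2$, it suffices to show that $q_i \leq 2^{O(\bar n)}$ for every $i$.

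To bound $q_i$, I would produce a nonzero bivariate polynomial $P_i(T,V_i)\in\mathbb{Z}[T,V_i]$ with $P_i(\mu,v_i(\mu))\equiv 0$ on a neighborhood of $\mu=0$ and $\deg_{V_i} P_i \leq 2^{O(\bar n)}$. Viewed in the $\bar n + 1$ variables $(\mu,V_1,\ldots,V_{\bar n})$, the equations in~\eqref{central_path} form a system of $O(\bar n)$ polynomials of total degree at most $2$ (the affine-linear constraints together with the bilinear equation $XS=\mu I_n$). By a Bezout-type bound for affine varieties, the sum of degrees of the irreducible components of the complex zero set of this system is at most $2^{O(\bar n)}$; in particular, the irreducible component $\mathcal{C}$ containing the central path, which is a curve by Assumption~\ref{IND_IPC} and the uniqueness of the central solution for each $\mu>0$, has degree at most $2^{O(\bar n)}$. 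Projecting $\mathcal{C}$ onto the $(T,V_i)$-plane gives a curve contained in $\{P_i=0\}$ for some polynomial $P_i$ with $\deg_{V_i} P_i \leq \deg \mathcal{C} \leq 2^{O(\bar n)}$; alternatively, a parametric real univariate representation obtained by the critical-point method of algorithmic real algebraic geometry (see \cite{BPR06}) yields such a $P_i$ with the same singly-exponential degree bound and controlled bitsize in $\tau$. Viewing $P_i$ as a polynomial in $V_i$ over the Puiseux series field $\mathbb{C}\{\{T\}\}$, the Newton-Puiseux theorem factors it as a product of linear factors whose ramification indices sum to $\deg_{V_i} P_i$; since $v_i(\mu)$ is one of these roots, $q_i \leq \deg_{V_i} P_i \leq 2^{O(\bar n)}$, completing the argument.

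The hard part is securing the singly-exponential bound on $\deg_{V_i} P_i$. A naive iterated-resultant elimination of the $\bar n - 1$ extra coordinates would only yield a doubly-exponential bound $2^{2^{O(\bar n)}}$, which is far too weak. The argument therefore needs to exploit the quadratic structure of~\eqref{central_path} through an affine Bezout bound, or equivalently through a critical-point construction adapted to the parametric setting with $\mu$ as a parameter; keeping track of the $\tau$-dependence of the bitsize of the resulting $P_i$ is also essential for the subsequent algorithmic analysis.
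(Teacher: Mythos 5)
Your proposal is correct and its skeleton coincides with the paper's: produce for each coordinate a bivariate integer polynomial $P_i$ of $V_i$-degree $2^{O(\bar n)}$ whose zero set contains the graph of $v_i(\mu)$ near $\mu=0$, bound the ramification index by $q_i\le \deg_{V_i}(P_i)$ via Newton--Puiseux, and conclude with $q\le\prod_i q_i\le 2^{O(\bar n^2)}=2^{O(m^2+n^2m+n^4)}$. The genuine difference is how the degree bound on $P_i$ is secured. The paper first computes a parameterized real univariate representation of the central path (\cite[Algorithm~12.18]{BPR06}, i.e.\ your ``critical-point method'' alternative), describes the graph of $v_i$ by the one-quantifier formula~\eqref{central_solution_formula}, and reads off $\deg_{V_i}(P_i)=2^{O(m+n^2)}$ from the quantifier-elimination estimates of Theorem~\ref{14:the:tqe}; this detour is what later makes the construction algorithmic. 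Your primary route --- an affine B\'ezout bound $2^{O(\bar n)}$ on the degree of the Zariski closure $\mathcal{C}$ of the central path in $(\mu,V)$-space, followed by projection to the $(\mu,V_i)$-plane --- is a legitimate and more direct way to get the existence statement, but one step needs care: uniqueness of the real positive-definite central solution does not by itself make $\mathcal{C}$ a one-dimensional irreducible component of the complex variety, and B\'ezout bounds degrees of components, not of arbitrary subvarieties. You should invoke the nonsingularity of the Jacobian of~\eqref{central_path} at each central solution (\cite[Theorem~3.3]{Kl02}) so that $v(\mu)$ is isolated in the complex fiber over each $\mu>0$, which forces any irreducible component through the central path to be one-dimensional. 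Finally, your phrase ``ramification indices sum to $\deg_{V_i}P_i$'' is accurate only for the sum over conjugacy classes of Puiseux roots (the paper routes this step through Weierstrass preparation and Proposition~\ref{geometric_Puiseux} applied to the irreducible factor carrying the central branch), but the inequality you actually use, $q_i\le\deg_{V_i}(P_i)$, is correct.
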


\begin{remark}
If we also take into account $m = O(n^2)$ from Assumption~\ref{IND_IPC} (because $A_1,\ldots,A_m$ are assumed to be linearly independent), then the optimal $\rho$ is $2^{O(n^4)}$. 
\end{remark}

\begin{remark}[Designing higher-order IPMs]
Our analytic reparametrization has the advantage that it is independent of the strict complementarity condition. In the words of the author in~\cite{H02}, this approach might be helpful in designing higher-order IPMs for SDO, and second-order conic optimization, with better local convergence than the regular IPMs.
\end{remark}
\noindent

\subsection{A symbolic procedure for computing $\rho$}
Our next contribution is a symbolic procedure, see Algorithm~\ref{analytic_param}, for an efficient computation of a feasible $\rho$ in Problem~\ref{CP_Analytic_Extension}. Using algorithmic real algebraic geometry and the theory of complex algebraic curves, we propose a symbolic algorithm, based on the Newton-Puiseux algorithm, which computes a feasible $\rho$ using $2^{O(m+n^2)}$ arithmetic operations, see Algorithm~\ref{analytic_param}. In the sequel, we prove that $\rho$ from Algorithm~\ref{analytic_param} is bounded by $2^{2^{O(m+n^2)}}$. The following theorem summarizes one of the main results of this paper.

\begin{theorem}\label{Algorithm_Complexity}
Given the central path equations in~\eqref{central_path} with coefficients in $\mathbb{Z}$, Algorithm~\ref{analytic_param} computes a feasible $\rho$ using $2^{O(m+n^2)}$ arithmetic operations, where $\rho$ is bounded by $2^{2^{O(m+n^2)}}$.  
\end{theorem}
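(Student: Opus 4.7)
The plan is to reduce Problem~\ref{CP_Analytic_Extension} to a Puiseux-expansion calculation for a single bivariate polynomial $F(\mu,T) \in \mathbb{Z}[\mu,T]$ obtained from~\eqref{central_path} by a real univariate representation (RUR), and then to apply the Newton-Puiseux algorithm to $F$. The two-step pipeline (RUR, then Newton-Puiseux) is what allows single-exponential running time while outputting a doubly-exponential integer: the first step collapses the $\bar{n} = m + 2n^2$ coordinate-wise Puiseux expansions into one, and the second reads off the ramification behaviour of that single expansion.

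First I would call a real univariate representation routine from algorithmic real algebraic geometry on the polynomial system~\eqref{central_path}, treating $\mu$ as a parameter. Since the equations in~\eqref{central_path} have degree at most $2$ in the $\bar{n}$ unknowns $V_1,\ldots,V_{\bar{n}}$, the standard single-exponential complexity bounds for such systems yield, in $2^{O(m+n^2)}$ arithmetic operations, a polynomial $F(\mu,T)\in\mathbb{Z}[\mu,T]$ together with polynomials $g_0,g_1,\ldots,g_{\bar{n}} \in \mathbb{Z}[\mu,T]$ of degree $2^{O(m+n^2)}$ such that each coordinate $v_i(\mu)$ of the central path equals $g_i(\mu,T(\mu))/g_0(\mu,T(\mu))$ for some analytic branch $T(\mu)$ of $F(\mu,T)=0$ at $\mu = 0$. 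In particular, the ramification index of every $v_i(\mu)$ divides that of $T(\mu)$, which lets us reduce the vector problem to a scalar one.

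Next I would run the Newton-Puiseux algorithm on $F(\mu,T)$ at $\mu = 0$. This returns the ramification indices $q_1,\ldots,q_k$ of all Puiseux branches of $F$ at the origin, with $k \le \deg_T F = 2^{O(m+n^2)}$. The classical Newton-Puiseux procedure runs in time polynomial in $\deg F$, so this step also fits in $2^{O(m+n^2)}$ arithmetic operations over $\mathbb{Z}$. I then set $\rho := \mathrm{lcm}(q_1,\ldots,q_k)$; because the branch singled out by the central path is among these, $\rho$ is a positive integer multiple of the true ramification index $q$ from Notation~\ref{vector_form}, and hence a feasible solution to Problem~\ref{CP_Analytic_Extension} by the fact (established in Section~\ref{Puiseux_Expansion} and invoked before Theorem~\ref{thm_reparametrization}) that any element of $\mathbb{Z}_{+}q$ is feasible.

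To bound $\rho$, I use that each $q_j$ is at most $\deg_T F = 2^{O(m+n^2)}$, so $\rho \le \mathrm{lcm}(1,2,\ldots,\deg_T F)$; by Chebyshev's estimate (or the prime number theorem) this lcm is $\exp\bigl(O(\deg_T F)\bigr) = 2^{2^{O(m+n^2)}}$, matching the claim. The main obstacle is the interface between the two subroutines: one must guarantee that the RUR faithfully packages the analytic branch of the central path as a branch of $F(\mu,T)$ at $\mu=0$, so that Newton-Puiseux applied to $F$ produces a ramification index that is genuinely a multiple of $q$ for the whole vector $(v_1,\ldots,v_{\bar{n}})$ rather than for a spurious component. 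A secondary technical point is verifying that the single-exponential complexity of the RUR survives for the particular structured system~\eqref{central_path}; here the bilinearity of the complementarity equation $\vectorize(XS-\mu I_n)=0$ and the linear independence in Assumption~\ref{IND_IPC} keep the number of variables and the degrees under control.
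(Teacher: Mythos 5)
Your route is genuinely different from the paper's. You run the Newton--Puiseux algorithm once, on the eliminating polynomial $f(\mu,T)$ of the real univariate representation of the whole path, and take the lcm of the ramification indices of all of its branches over $\mu=0$, using the observation that the ramification index of each coordinate $v_i(\mu)=g_i(\mu,t_\sigma(\mu))/g_0(\mu,t_\sigma(\mu))$ divides that of the parameter branch $t_\sigma(\mu)$. The paper instead applies quantifier elimination (Theorem~\ref{14:the:tqe}) to the formula~\eqref{central_solution_formula} separately for each of the $\bar n$ coordinates, obtaining bivariate polynomials $P_i(\mu,V_i)$, runs Newton--Puiseux on each $P_i$, uses a real univariate representation of the limit point to retain only the branches centered at $(0,v_i^{**})$, and returns the lcm over $i$ of the products of the retained distinct ramification indices. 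Your scheme is coarser but simpler, and it yields the same asymptotic bounds: $\mathrm{lcm}(1,\ldots,d)=e^{O(d)}$ with $d=2^{O(m+n^2)}$ gives $2^{2^{O(m+n^2)}}$, versus the paper's bound $\prod_s q_{is}\le\lceil e^{d/e}\rceil$ per coordinate. What the paper's per-coordinate filtering buys is a smaller $\rho$ (optimal when each $P_i$ is irreducible over $\mathbb{C}\{\mu\}$), which is one of its stated goals; your lcm over all branches of $f$ makes no attempt at this. Note also that the theorem is a statement about Algorithm~\ref{analytic_param} specifically, so your argument establishes the claimed complexity bounds for a different procedure; interestingly, the single-polynomial idea is mentioned at the start of Section~\ref{Puiseux_Expansion} and set aside there because exact computation of the coordinate-wise indices $q_i$ from $g_i/g_0$ is not effective, an objection your lcm-over-all-branches relaxation does circumvent for feasibility (though not for optimality).

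Two gaps remain. First, the ``interface'' you flag is precisely the nontrivial step, and you leave it unresolved: one must prove that among the finitely many parameterized univariate representations produced by the sampling algorithm there exists one pair $(u,\sigma)$ describing the central path for all $\mu\in(0,1/\gamma)$, quantify $\gamma$, and select that pair algorithmically. The paper does this in Lemma~\ref{how_much_small} and via the positive-definiteness sentences~\eqref{central_path_selection} decided by General Decision; without it you have no guarantee that the branch carrying the central path is among those your Newton--Puiseux call enumerates. Second, before invoking the symbolic Newton--Puiseux algorithm of \cite{W00} you must normalize $f$: divide out its content and pass to the separable (square-free) part so that the root multiplicities stabilize and the iteration terminates with well-defined ramification indices, and apply a substitution as in~\eqref{prelim_transformation} so that all branches over $\mu=0$ are bounded (otherwise the branch containing $t_\sigma$ could a priori be a place at infinity and escape the enumeration ``at the origin''). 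These are exactly Steps~\ref{Boundedness_Step}--\ref{Sq_Free_Step} of Algorithm~\ref{analytic_param}; they are routine but cannot be omitted from a complete argument.
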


\subsection{Outlines of the procedures and proofs} We now briefly state the ideas behind our symbolic algorithm and the proof of our main results.

\vspace{5px}
\noindent
 In order to prove Theorem~\ref{thm_reparametrization}, we use degree bounds from the parameterized bounded algebraic sampling~\cite[Algorithm~12.18]{BPR06} and the quantifier elimination (see Theorem~\ref{14:the:tqe}), and then we apply the result of the Newton-Puiseux theorem (see Proposition~\ref{geometric_Puiseux}). 

In the next step, we elaborate on the proof technique of Theorem~\ref{thm_reparametrization} to develop a symbolic algorithm (Algorithm~\ref{analytic_param}) for an efficient computation of $\rho$. Our symbolic algorithm consists of the following basis elements in order:
\begin{itemize}
\item Computing a real univariate representation of the central path (Algorithm~\ref{alg:sampling});
\item Developing a formula which describes the graph of the $i^{\mathrm{th}}$ coordinate of the central path based on the real univariate representation;
\item Applying quantifier elimination to the preceding formula to develop a quantifier-free $\mathcal{P}_i$-formula;
\item Applying the univariate sign determination to identify $P_i \in \mathcal{P}_i$ whose zero set contains the graph of the $i^{\mathrm{th}}$ coordinate of the central path; 
\item Applying the Newton-Puiseux algorithm (Algorithm~\ref{analytic_param}) to $P_i$ to compute a feasible $\rho$.
\end{itemize}

 Algorithm~\ref{alg:sampling} invokes the parameterized bounded algebraic sampling ~\cite[Algorithm~12.18]{BPR06} and the quantifier elimination algorithm~\cite[Algorithm~14.5]{BPR06} to compute the real univariate representation of the central path for sufficiently small $\mu$, see Lemma~\ref{how_much_small}. The output is an $(\bar{n}+3)$-tuple of polynomials in $\mathbb{Z}[\mu,T]$ along with a Thom encoding $\sigma$ which describes the tail end of the central path, see Section~\ref{background_RAG}. By applying the quantifier elimination algorithm to a quantified formula derived from the output of Algorithm~\ref{alg:sampling} (i.e., real univariate representations, see~\eqref{central_solution_formula}), and then applying the univariate sign determination algorithm~\cite[Algorithm~10.13]{BPR06}, Algorithm~\ref{analytic_param} identifies a polynomial $P_i \in \mathbb{Z}[\mu,V_i]$, for $i=1,\ldots,\bar{n}$, whose zero set contains the graph of the $i^{\mathrm{th}}$ coordinate of the central path. Algorithm~\ref{analytic_param} invokes a symbolic Newton-Puiseux algorithm from~\cite[Algorithm~1]{W00} (see Proposition~\ref{Puiseux_Newton_Thm}) to compute ramification indices of all Puiseux expansions of $P_i = 0$ near $\mu = 0$, for every $i=1,\ldots,\bar{n}$, which converge to the $i^{\mathrm{th}}$ coordinate of the limit point of the central path. Here, we also utilize the real univariate representation of the limit point of the central path from~\cite[Algorithm~3.2]{BM22}. As a consequence, Algorithm~\ref{analytic_param} outputs a feasible $\rho$ by computing the least common multiple, over $i=1,\ldots,\bar{n}$, of the product of all distinct ramification indices corresponding to the above Puiseux expansions of $P_i = 0$. The reason for taking the ``product" of all ramification indices in Algorithm~\ref{analytic_param} will be made clear in Section~\ref{Rho_Computation}. 
 
 The proof of Theorems~\ref{Algorithm_Complexity} is determined based on the complexity of the parameterized bounded algebraic sampling, the quantifier elimination, and the symbolic Newton-Puiseux algorithm in~\cite[Algorithm~1]{W00}. Although Theorem~\ref{thm_reparametrization} gives a singly exponential upper bound on $\rho$, in Algorithm~\ref{analytic_param} we can only guarantee a doubly exponential upper bound on a feasible $\rho$, because we take the product of all ramification indices.

\begin{remark}
Notice that a feasible $\rho$ can be immediately derived using the degree of $P_i$ with respect to $V_i$, and without the use of Newton-Puiseux algorithm in Algorithm~\ref{analytic_param}. However, our goal here is to compute the best feasible $\rho$, if not optimal. This is also important for computational optimization purposes, because higher values of $\rho$ will result in ill-conditioning of the Jacobian matrix of the central path equations. 
\end{remark}

 \begin{remark}
We should indicate that Algorithm~\ref{analytic_param} will return the optimal $\rho$ when the branch containing the graph of the $i^{\mathrm{th}}$ coordinate of the central path is isolated from the other branches for every $i=1,\ldots,\bar{n}$. In particular, Algorithm~\ref{analytic_param} returns the optimal $\rho$ if $P_i$ for all $i=1,\ldots,\bar{n}$ are irreducible over $\mathbb{C}\{\mu\}$. 
\end{remark}

\begin{remark}
We should point out the paper~\cite{HLMZ21} in which the authors implement an analog of our analytic reparametrization to speed up the convergence for SDO using the homotopy continuation method. In contrast to~\cite{HLMZ21}, our paper provides a quantitative bound on the optimal $\rho$ and presents a symbolic algorithm to compute a feasible $\rho$.  
\end{remark}

The rest of this paper is organized as follows. In Section~\ref{survey}, we review prior results on the complexity of SDO, convergence, and analyticity of the central path in cases of linear optimization, linear complementarity problems, and SDO. In Section~\ref{AG_background}, we provide the preliminaries to real algebraic geometry, the theory of complex algebraic curves, and the central path. Our main results are presented in Sections~\ref{symbolic_algorithms} and~\ref{Newton_Polygon_Alg}. In Section~\ref{semi-algebraic_central_path}, we present the basis of Algorithm~\ref{alg:sampling} for the real univariate representation of the central path, when $\mu$ is sufficiently small. In Section~\ref{Puiseux_Expansion}, we explain our theoretical approach and prove Theorem~\ref{thm_reparametrization}. In Section~\ref{Newton_Polygon_Alg}, we present Algorithm~\ref{analytic_param} and then prove its complexity stated in Theorem~\ref{Algorithm_Complexity}. Finally, we end with concluding remarks and topics for future research in Section~\ref{conclusion}.

\section{Prior and related work}\label{survey}
\subsection{Complexity}
The convex nature of SDO by no means implies polynomial solvability, in contrast to linear optimization. In the bit/real number model of computation, the complexity of SDO and polynomial optimization is well-known: there is no polynomial-time algorithm yet for an exact solution of these classes of optimization problems, see~\cite[Section~4.2]{B17} or~\cite{R97,RP96}. In the bit model of computation, a semi-definite feasibility problem either belongs to $\mathbf{NP} \cap \mathbf{co-NP}$ or $\mathbf{NP} \cup \mathbf{co-NP}$~\cite{R97}. In the real number model of computation~\cite{BCSS98}, a semi-definite feasibility problem belongs to $\mathbf{NP} \cap \mathbf{co-NP}$. Thus, the intrinsic nonlinearity of SDO, arising from the exponentially many positive semi-definite constraints, makes it no easier to solve than a polynomial optimization problem, which itself is, in general, NP-hard~\cite{M87}. In terms of algorithmic real algebraic geometry, the complexity of describing a primal-dual solution of SDO problem is 
\begin{align*}
\max\Big\{(2^n+m)^{O(n^2)},2^{O(mn)}\Big\},
\end{align*}
see~\cite[Algorithm~14.9]{BPR06}.

\subsection{Convergence}
On the computational optimization side, there exist efficient primal-dual IPM solvers to compute an approximate solution of SDO~\cite{Kl02,NN94}. However, even for an approximate solution, one can find well-structured pathological instances which IPM solvers either fail to solve or solve at a very slow convergence rate, see e.g.,~\cite{WW10}. By analogy with linear optimization, this poor performance can be linked to analytic or algebro-geometric properties of the central path~\cite{BL89a,BL89b,DSV12,DMS05,GS98,H02,LSZ98}. To mention but a few, there is a body of literature that deal with limiting behavior of the central path under the stronger condition of strict complementarity, see Definition~\ref{maximal_strict_comp}. Among other outstanding results, it is well-known that the central path is analytic at $\mu = 0$~\cite[Theorem~1]{H02}, and converges to the limit point $\big(X^{**},y^{**},S^{**}\big)$ at the rate of 1~\cite[Theorem~3.5]{LSZ98}: 
\begin{align}\label{Lips_Bounds}
 \|X(\mu)-X^{**}\| = O(\mu) \ \ \text{and} \  \ \|S(\mu)-S^{**}\| = O(\mu).
\end{align}
\noindent
This in turn accounts for the superlinear convergence of IPMs. On the other hand, both the analyticity at $\mu = 0$ and the Lipschitzian bounds~\eqref{Lips_Bounds} fail to hold in the absence of the strict complementarity condition~\cite{GS98}, see also Example~\ref{derivatives_diverge}. In~\cite{BM22}, the authors investigated the degree and the convergence rate of the central path from the perspective of algorithmic real algebraic geometry~\cite{BPR06}. The authors provided a lower bound on the convergence rate of the central path.
\begin{proposition}[Theorem~1.1 in~\cite{BM22}]\label{distance_to_limit_point}
Let $(X^{**},y^{**},S^{**})$ be the limit point of the central path. Then for sufficiently small $\mu$ we have
\begin{align}\label{general_bounds} 
\|X({\mu})-X^{**}\|= O(\mu^{1/\gamma}) \ \ \text{and} \ \  \|S({\mu})-S^{**}\|=O(\mu^{1/\gamma}),
\end{align}
where $\gamma=2^{O(m+n^2)}$. 
\end{proposition}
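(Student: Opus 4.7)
The plan is to exploit the fact that each coordinate $v_i(\mu)$ of the central path is a real analytic semi-algebraic function on $(0,\infty)$, and hence admits a convergent Puiseux expansion near $\mu=0$. Bounding the convergence rate reduces to bounding the ramification index of this expansion. The overall strategy parallels (and anticipates) the machinery described in the introduction for Algorithm~\ref{analytic_param}: produce, for each coordinate, a bivariate polynomial whose zero locus contains the graph of that coordinate, then apply Newton-Puiseux to read off a lower bound on the leading exponent.

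First, I would compute a real univariate representation of the graph of the central path parameterized by $\mu$, by applying the parameterized bounded algebraic sampling algorithm (\cite[Algorithm~12.18]{BPR06}) to the polynomial system in~\eqref{central_path}. This system involves $\bar n = m+2n^2$ unknowns and has constant degree, so the resulting univariate representation consists of polynomials in $\Z[\mu,T]$ whose degrees are singly exponential in $m+n^2$. Next, for each coordinate $i$, I would write down the quantified formula expressing that $V_i$ is the $i$-th coordinate of a central solution at parameter $\mu$, and apply the quantifier elimination algorithm (\cite[Algorithm~14.5]{BPR06}) to obtain a quantifier-free description. Tracking the degree bounds through sampling and elimination, one extracts a polynomial $P_i(\mu,V_i) \in \Z[\mu,V_i]$, vanishing on the graph of the $i$-th coordinate of the central path, whose total degree is $2^{O(m+n^2)}$.

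The decisive step is the application of the Newton-Puiseux theorem to $P_i$ at the point $(0,v_i^{**})$. By analyticity of $\xi$ on $(0,\infty)$ together with the convergence $\xi(\mu)\to v^{**}$, exactly one branch of $P_i=0$ over $\mu=0$ carries the graph of $v_i$, and this branch has a Puiseux expansion of the form
\[
v_i(\mu) = v_i^{**} + c_i \mu^{p_i/q_i} + (\text{higher order terms}),
\]
with $p_i,q_i$ coprime positive integers and $c_i\neq 0$ (or else $v_i\equiv v_i^{**}$, which is a trivial case). The ramification index $q_i$ divides the degree of $P_i$ in $V_i$ associated to the branch, and is therefore bounded by $2^{O(m+n^2)}$. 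Setting $\gamma$ to be the maximum of the $q_i$ over all coordinates of $X(\mu)-X^{**}$ and $S(\mu)-S^{**}$, we get $|v_i(\mu)-v_i^{**}| = O(\mu^{p_i/q_i}) = O(\mu^{1/\gamma})$ for each coordinate, since $p_i\ge 1$. Summing coordinatewise yields the asserted Lipschitz-type bounds in~\eqref{general_bounds}.

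The main obstacle is propagating the singly exponential degree bound through the sampling and quantifier elimination steps without incurring an additional exponential blow-up; the standard worst-case bounds for general quantifier elimination are doubly exponential, and only the structured parameterized variant of~\cite{BPR06} keeps the bound singly exponential in $m+n^2$. A secondary technical point is to ensure that Puiseux expansions give a valid bound even when $P_i$ is reducible over $\C\{\mu\}$: this is handled by picking the unique branch containing the analytic arc $\mu\mapsto v_i(\mu)$, which exists because the central path is an analytic semi-algebraic curve with a well-defined limit. Once these two issues are addressed, the bound $\gamma=2^{O(m+n^2)}$ follows directly from the degree bound on $P_i$.
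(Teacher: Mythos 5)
This proposition is imported from \cite{BM22} and not proved in the present paper, but your argument is essentially the same machinery the paper itself develops in Sections~\ref{semi-algebraic_central_path}--\ref{Puiseux_Expansion} for Theorem~\ref{thm_reparametrization}: a real univariate representation of the central path via parameterized bounded algebraic sampling, quantifier elimination applied to the formula \eqref{central_solution_formula} to obtain $P_i\in\mathbb{Z}[\mu,V_i]$ of degree $2^{O(m+n^2)}$ whose zero set contains the graph of $v_i$, and the Newton-Puiseux bound $q_i\le \deg_{V_i}(P_i)$ giving the exponent $1/\gamma$. The sketch is correct; the only small inaccuracy is the claim that $q_i$ \emph{divides} the degree --- when $P_i$ is reducible over $\mathbb{C}\{\mu\}$ the ramification index equals the degree of the irreducible Weierstrass factor carrying the branch (Propositions~\ref{geometric_Puiseux} and~\ref{WPT}), so only the inequality $q_i\le\deg_{V_i}(P_i)$ holds, which is all the bound requires.
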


\subsection{Analyticity}
Variants of Problem~\ref{CP_Analytic_Extension} have been also studied for linear optimization and linear complementarity problems, see e.g.,~\cite{G94,MT96,SW98,SW99}. For linear complementarity problems with no strictly complementary solution, the central path with reparametrization $\mu \mapsto \sqrt{\mu}$ can be analytically extended to $\mu = 0$~\cite{SW98,SW99}. This is mainly due to the fact that variables along the central path have magnitudes $O(1)$, $O(\mu)$, or $O(\sqrt{\mu})$~\cite{IPRT00}. However, such a classification does not necessarily hold for SDO, as shown in~\cite[Theorem~3.8]{MT19}. In fact, the only studies of Problem~\ref{CP_Analytic_Extension} for SDO are either under the assumption that a strictly complementary solution exists~\cite{H02,PS04} or under very restrictive conditions~\cite{NFOM05}.


\section{Background}\label{AG_background}
We briefly review the concepts of semi-algebraic sets, power and Puiseux series, complex algebraic curves, and the analyticity of the central path. Our notation for real closed fields, Puiseux series, formal power series, algebraic curves, and semi-definite optimization is consistent with those in~\cite{BM22,BPR06,F01,SWP08,W78}. For an exposition of algebraic curve theory and algebraic functions, the reader is referred to~\cite{BG91,JS87,W78}.

\vspace{5px}
\noindent
\paragraph{\textit{Definition of complexity}} 
\hide{
In this paper, the complexity of an algorithm is defined according to~\cite[Chapter~8]{BPR06}, and it is a function of problem size, including the number of variables, the number of polynomials and degrees of polynomials. More specifically, the inputs to Algorithms~\ref{alg:sampling} to~\ref{Weierstrass_Alg} is an integral polynomial, see~\eqref{central_path_defining_polynomial}, formed by taking the sum of squares of polynomials in~\eqref{central_path}. By complexity we mean the number of arithmetic operations and comparisons in $\mathbb{Z}$ used by these algorithms. 
Since we deal with integral polynomials, this notion of complexity agrees with the bit (Turing) notion of complexity.
}
By complexity of an algorithm we will mean the number of arithmetic operations in the ring
$\mathbb{Z}$ 
including comparisons needed by the algorithm (see ~\cite[Chapter~8]{BPR06}).
The complexity will be bounded in terms of the number of variables, the number of polynomials in the input and the degrees of polynomials. More specifically, the input to Algorithms~\ref{alg:sampling} and~\ref{analytic_param} is an integral polynomial, see~\eqref{central_path_defining_polynomial}, formed by taking the sum of squares of polynomials in~\eqref{central_path}. 
\hide{Sometimes in addition to counting the number of arithmetic operations we will be more precise and count the number of bit operations
used by the algorithm and bound it in terms of the bitsizes of the coefficients of the
polynomials in the input.
This last notion of complexity (defined in terms of the number of bits) agrees with the classical (Turing) notion of complexity.}
\subsection{Puiseux series, real closed fields and semi-algebraic sets}
\label{background_RAG}
In this section we recall some relevant notions from real algebraic geometry
(the reader can consult~\cite[Chapter 2]{BPR06} for more details). From now on, for  an integral domain $\mathrm{D}$, we denote by $\mathrm{D}[Y_1,\ldots,Y_{\ell}]_{\leq d}$  the subset of polynomials in the ring $\mathrm{D}[Y_1,\ldots,Y_{\ell}]$ with degrees $\leq d$. Furthermore, $\mathrm{D}(Y_1,\ldots,Y_{\ell})$ denotes the  field of fractions of $\mathrm{D}[Y_1,\ldots,Y_{\ell}]$.

\begin{notation}
Let $\mathrm{K}$ be a field of characteristic zero. A polynomial $P \in \mathrm{K}[X]$ is called \textit{separable} if $\gcd(P,P') \in \mathrm{K}\setminus\{0\}$, where $\gcd(P,P')$ denotes the greatest common divisor of $P$ and its derivative $P'$. If there is no non-constant polynomial $A \in \mathrm{K}[X]$ such that $A^2$ divides $P$, then $P$ is called \textit{square-free}.
\end{notation}
If $\mathrm{K}$ is a field with characteristic zero, then $P$ is separable if and only if it is square-free.

\subsubsection{Puiseux series}
A \textit{Puiseux series} with coefficients in $\mathbb{C}$ (resp.  $\mathbb{R}$)
is a Laurent series with fractional exponents, i.e., a series of the form $\sum_{i = r}^{\infty} c_i \varepsilon^{i/q}$ where $c_i \in \mathbb{C}$ (resp. $c_i \in \mathbb{R})$, $i,r \in \mathbb{Z}$, and $q$ is a positive integer, which is called the \textit{ramification index} of the Puiseux series. 

Puiseux series appear naturally in algebraic geometry if we want to express the roots
of a polynomial $F(X,Y)$, parametrized by $X$ in a neighborhood of $0$. 
In this paper, they 
appear in the the proof of Theorem~\ref{thm_reparametrization}, 
when we express the roots of a polynomial $P_i \in \mathbb{Z}[\mu,V_i]$ near $\mu = 0$, where the zero set of $P_i$ contains the graph of the $i^{\mathrm{th}}$ coordinate of the central path.

The set of Puiseux series in $\varepsilon$ with coefficients in $\mathbb{C}$ (resp. $\mathbb{R}$) is a field, which we denote by 
$\mathbb{C}\langle \langle \varepsilon \rangle\rangle$ (resp. $\mathbb{R}\langle \langle \varepsilon \rangle\rangle$). It is a classical fact that the field 
$\mathbb{C}\langle \langle \varepsilon \rangle\rangle$ 
(resp. $\mathbb{R}\langle \langle \varepsilon \rangle\rangle$)
is algebraically closed (resp. real closed).

The subfield of $\mathbb{R}\langle \langle \varepsilon \rangle\rangle$ of elements which are algebraic over $\mathbb{R}(\varepsilon)$ is called the field of algebraic Puiseux series with coefficients in $\mathbb{R}$, 
and is denoted by $\mathbb{R} \langle \varepsilon \rangle$. 
It is the real closure of the ordered field $\mathbb{R}(\varepsilon)$ in which $\varepsilon$ is positive but smaller than every positive element of $\mathbb{R}$.
An alternative description of $\mathbb{R} \langle \varepsilon \rangle$ is that it is the field of germs of semi-algebraic functions to the right of the origin. Thus, each element
of $\mathbb{R} \langle \varepsilon \rangle$ is represented by a continuous semi-algebraic
function $(0,t_0) \rightarrow \mathbb{R}$ (see \cite[Chapter 2]{BPR06}), and this is the 
reason why the field $\mathbb{R} \langle \varepsilon \rangle$ plays an important role in the
study of germs of semi-algebraic curves (for example, the germ of the central path which is a semi-algebraic curve). 
\hide{
The fields $\mathbb{R}$, $\mathbb{R}\langle \langle \varepsilon \rangle\rangle$, and $\mathbb{R} \langle \varepsilon \rangle$ are all examples \textit{real closed fields}~\cite[Exercise 2.19 and Theorem 2.91]{BPR06}.
Real closed fields have a unique order compatible with the field operations. For example,  the field $\mathbb{R} \langle \varepsilon \rangle$, this unique order is the one in which $\varepsilon$ is positive but smaller than every positive element of $\mathbb{R}$. 
}
We let $o(\cdot)$ denote the order of a Puiseux series, and it is defined as $o(\sum_{i = r}^{\infty} c_i \varepsilon^{i/q})=r/q$ if $c_{r} \neq 0$, see~\cite[Section~2.6]{BPR06}. 
We denote by $\mathbb{R}\langle\varepsilon\rangle_b$ the subring of $\mathbb{R}\langle\varepsilon\rangle$ of elements with are bounded over $\mathbb{R}$
(i.e. all Puiseux series in $\mathbb{R}\langle\varepsilon \rangle$ whose orders are
non-negative). 
We denote by $\lim_{\varepsilon}:\mathbb{R}\langle\varepsilon\rangle_b \rightarrow \mathbb{R} $ 
which maps a bounded Puiseux series $\sum_{i = 0}^{\infty} c_i\varepsilon^{i/q}$ to $c_0$
(i.e. to the value at $0$ of the continuous extension of the corresponding curve).

In terms of germs, the elements of $\mathbb{R}\langle\varepsilon\rangle_b$
are represented by semi-algebraic functions $(0,t_0) \rightarrow \mathbb{R}$ which can be extended continuously to $0$, and $\lim_\varepsilon$ maps such an element to the value
at $0$ of the continuous extension.

\subsubsection{Real closed fields}
While for the most part we will be concerned with the field of real numbers, at some points 
we will need to consider the non-Archimedean real closed extensions of $\mathbb{R}$ -- namely, the field of algebraic Puiseux series with coefficients in $\mathbb{R}$. Recall from~\cite[Chapter 2]{BPR06} that a real closed field $\R$ is an ordered field in which every positive element is a square and every polynomial having an odd degree has a root in $\R$.

Given a real closed field $\R$ and a set $\mathcal{P} \subset \R[Y_1,\ldots,Y_{\ell}]$, a \textit{quantifier-free $\mathcal{P}$-formula} $\mathrm{\Phi}(Y_1,\ldots,Y_{\ell})$ with coefficients in $\R$ is a Boolean combination of atoms $P > 0$, $P=0$, or $P < 0$ where $P \in \mathcal{P}$, and $\{Y_1,\ldots,Y_{\ell}\}$ are the \emph{free variables} of $\mathrm{\Phi}$. 
A quantified $\mathcal{P}$-formula is given by 
\begin{align*}
\Psi= (\mathrm{Q}_1 X_1) \cdots (\mathrm{Q}_{k} X_{k}) \ \mathcal{B}(X_1,\ldots, X_{k},Y_1,\ldots,Y_{\ell}),
\end{align*}
in which $\mathrm{Q}_i \in\{\forall,\exists\}$ are quantifiers and $\mathcal{B}$ is a quantifier-free $\mathcal{P}$-formula with $\mathcal{P} \subset \R[X_1,\ldots,X_{k},Y_1,\ldots,Y_{\ell}]$. A formula with no free variable is called a \textit{sentence}. The set of all $(y_1,\ldots,y_{\ell}) \in \R^{\ell}$ satisfying $\Psi$ is called the $\R$-realization of $\Psi$, and it is denoted by $\RR(\Psi,\R^{\ell})$. A $\mathcal{P}$-\textit{semi-algebraic} subset of $\R^{\ell}$ is defined as the $\R$-realization of a quantifier-free $\mathcal{P}$-formula.

It is a classical result due to Tarski \cite{Tarski51} that the first order theory of real closed fields is decidable and admits quantifier elimination. Thus every quantified formula
is equivalent modulo the theory of real closed fields to a quantifier-free formula. Later
in the paper we will use an effective version of this theorem~\cite[Theorem~14.16]{BPR06} equipped with complexity estimates, as stated below.

\begin{theorem}
[Quantifier Elimination]\label{14:the:tqe}
Let $\mathcal{P} \subset \R[X_{[1]},\ldots, X_{[\omega]}, Y]_{\leq d}$ be a
finite set of $s$ polynomials, where $X_{[i]}$ is a block of $k_i$ variables, and
$Y$ is a block of $\ell$ variables.
Consider the quantified formula 
\[
\Phi (Y) = (Q_1 X_{[1]})\cdots (Q_\omega X_{[\omega]}) \Psi(X_{[1]},\ldots, X_{[\omega]}, Y)
\]
and $\Psi$ a $\mathcal{P}$-formula.
 Then there exists a quantifier-free formula 
  \[ \Psi (Y) = \bigvee_{i=1}^{I} \bigwedge_{j=1}^{J_{i}} \Big(
     \bigvee_{n=1}^{N_{ij}} \sign (P_{ijn} (Y))= \sigma_{ijn} \Big)  \]
    equivalent to $\Phi$, where $P_{ijn} (Y)$ are polynomials in the variables $Y$, $\sigma_{ijn} \in
  \{0,1, - 1\}$, and
\begin{align*}
\sign(P_{ijn}(Y))\!&:=\begin{cases} \ \ 0 \ \ &P_{ijn}(Y) = 0,\\ \ \ 1 \ \ &P_{ijn}(Y) > 0,\\-1  &P_{ijn}(Y) < 0.  \end{cases}
\end{align*}
Furthermore, we have  
  \begin{eqnarray*}
    I & \leq & s^{(k_{\omega} +1) \cdots (k_{1} +1) ( \ell +1)} d^{O
    (k_{\omega} ) \cdots O (k_{1} ) O ( \ell )} ,\\
    J_{i} & \leq & 
      s^{(k_{\omega} +1) \cdots (k_{1} +1)} d^{O (k_{\omega} ) \cdots O (k_{1}
      )}
    ,\\
    N_{ij} & \leq &
      d^{O (k_{\omega} ) \cdots O (k_{1} )}
  ,
  \end{eqnarray*}
  and the degrees of the polynomials $P_{ijk} (y)$ are bounded by $d^{O
  (k_{\omega} ) \cdots O (k_{1} )}$. Moreover, there exists an algorithm (~\cite[Algorithm~14.5]{BPR06} (Quantifier Elimination)) to compute $\Phi (Y)$ with complexity 
  \begin{align*}
      s^{(k_{\omega} +1) \cdots (k_{1} +1)(\ell+1)} d^{O (k_{\omega} ) \cdots O (k_{1}) O(\ell)
      }
  \end{align*}
  in $\mathrm{D}$, where $\mathrm{D}$ denotes the ring generated by the coefficients of the polynomials in $\mathcal{P}$. If $\mathrm{D}=\mathbb{Z}$ and $\tau$ denotes an upper bound on bitsizes of $\mathcal{P}$, then the bitsizes of the integers in the intermediate computations and the output are bounded by $\tau d^{O(k_{\omega}) \cdots O (k_{1})O(\ell)}$.
\end{theorem}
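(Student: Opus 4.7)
The plan is to prove the theorem by induction on the number of quantifier blocks $\omega$, at each stage eliminating the innermost block and carefully tracking how the parameters $(s,d,k_i,\ell)$ evolve. The base case $\omega = 0$ is vacuous (a quantifier-free formula is already of the desired form up to reorganizing atoms via a sign-determination procedure). The main reduction step is to show that if we can produce an equivalent quantifier-free DNF formula for $\omega-1$ blocks with the claimed bounds, then we can do the same for $\omega$ blocks by first eliminating the innermost block $(\mathrm{Q}_\omega X_{[\omega]})$.

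To eliminate a single block, I first normalize using $\forall X_{[\omega]}\,\Psi \equiv \neg \exists X_{[\omega]}\, \neg\Psi$, so that without loss of generality $\mathrm{Q}_\omega = \exists$. The core computational tool is \emph{parameterized bounded algebraic sampling} (\cite[Algorithm~12.18]{BPR06}), which, given a finite set $\mathcal{P} \subset \R[X_{[\omega]}, Z]_{\le d}$ of $s$ polynomials (where $Z=(X_{[1]},\ldots,X_{[\omega-1]},Y)$ is treated as parameters), outputs a finite family of parameterized points realizing, over every specialization of $Z$, every consistent sign condition of $\mathcal{P}$ on $\R\la\eps\ra^{k_\omega}$. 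The number and degrees of the polynomials produced in the parameters $Z$ are bounded by $s^{k_\omega+1} d^{O(k_\omega)}$, and the algorithm runs in time $s^{k_\omega+1} d^{O(k_\omega)}$ in the coefficient ring $\mathrm{D}$. Combining this with the \emph{parameterized sign determination} algorithm then yields, for each sign condition on $\mathcal{P}$, a Boolean combination of sign conditions on a new family $\mathcal{P}' \subset \mathrm{D}[Z]$ of polynomials with
\[
\#\mathcal{P}' \le s^{k_\omega+1} d^{O(k_\omega)}, \qquad \deg \mathcal{P}' \le d^{O(k_\omega)},
\]
such that $\exists X_{[\omega]}\,\Psi$ is equivalent over $\R$ to a Boolean combination of $\mathcal{P}'$-atoms. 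This produces a quantifier-free formula with $\omega-1$ remaining blocks, but now with parameters $(s',d') = (s^{k_\omega+1} d^{O(k_\omega)}, d^{O(k_\omega)})$.

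I then iterate this block-elimination step $\omega$ times. At each iteration the multiplicative update is $s \mapsto s^{k_i+1}d^{O(k_i)}$ and $d \mapsto d^{O(k_i)}$, so after telescoping through blocks $\omega,\omega-1,\ldots,1$ and finally applying a sign-determination step in the $\ell$ free variables (which contributes an additional $(\ell+1)$-fold and $O(\ell)$-fold factor in the exponents), the number of disjuncts $I$, conjuncts $J_i$, literals $N_{ij}$, the degree bound, and the total arithmetic cost all satisfy the claimed products $s^{(k_\omega+1)\cdots(k_1+1)(\ell+1)}\, d^{O(k_\omega)\cdots O(k_1)O(\ell)}$ (with the indicated variants for $J_i, N_{ij}$, and the degree). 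The bit-complexity refinement when $\mathrm{D}=\mathbb{Z}$ follows because each algebraic sampling/sign-determination step multiplies bitsizes by the same degree factor, giving the asserted bound $\tau d^{O(k_\omega)\cdots O(k_1)O(\ell)}$.

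The main obstacle, and indeed the technical heart of the argument, is the single-block elimination: proving that parameterized bounded algebraic sampling correctly realizes all sign conditions uniformly in the parameters, with the stated degree and complexity bounds. This rests on the critical point method applied to a carefully deformed polynomial $\sum_j P_j^2 + \eps(\,\|X_{[\omega]}\|^{2e} - 1/\eps\,)$ (or a similar infinitesimal perturbation) to reduce real solvability to computing critical points of a smooth bounded map, and on a Bezout-type bound for these critical points treated as a parameterized algebraic system; taking $\lim_\eps$ then transfers conclusions back to $\R$. The remaining bookkeeping---composing block eliminations and tracking how $s$, $d$, and coefficient bitsizes compound multiplicatively at each stage---is routine once the single-block step is in hand.
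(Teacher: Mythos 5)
This theorem is quoted verbatim from \cite[Theorem~14.16]{BPR06} and the paper offers no proof of its own, so there is nothing internal to compare against; your proposal is, in outline, a faithful reconstruction of the standard argument in that reference (block-by-block elimination of the innermost quantifier via parameterized bounded algebraic sampling plus sign determination, with the $s \mapsto s^{k_i+1}d^{O(k_i)}$, $d \mapsto d^{O(k_i)}$ updates telescoping to the stated bounds). The sketch is correct as far as it goes, with the genuinely hard content (uniform correctness and complexity of the parameterized sampling step) appropriately identified but deferred to \cite[Algorithm~12.18]{BPR06}.
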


\vspace{5px}
\noindent
We also state here the complexity of the quantifier elimination for deciding the truth or falsity of a sentence from~\cite[Theorem~14.14]{BPR06}.

\begin{theorem}[General Decision]\label{thm:general_decision}
Let $\mathcal{P} \subset \R[X_{[1]},\ldots, X_{[\omega]}]_{\leq d}$ be a
finite set of $s$ polynomials, where $X_{[i]}$ is a block of $k_i$ variables. Given a sentence $\Phi$, there exists an algorithm which decides the truth of $\Phi$ using  
  \begin{align*}
      s^{(k_{\omega} +1) \cdots (k_{1} +1)} d^{O (k_{\omega} ) \cdots O (k_{1})
      }
  \end{align*}
 arithmetic operations in $\mathrm{D}$, where $\mathrm{D}$ denotes the ring generated by the coefficients of the polynomials in $\mathcal{P}$.  
\end{theorem}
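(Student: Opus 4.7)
My plan is to prove Theorem~\ref{thm:general_decision} by induction on the number of quantifier blocks $\omega$, reducing to a single block-elimination subroutine at each stage. The base case $\omega = 0$ is immediate: a quantifier-free $\mathcal{P}$-sentence is a Boolean combination of sign conditions on constant polynomials in $\mathrm{D}$ and can be evaluated in a bounded number of arithmetic operations per atom, well within the claimed bound.

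For the inductive step, I would view the innermost sub-formula $(Q_\omega X_{[\omega]})\, \mathcal{B}$ as a parametric formula in the free variables $X_{[1]}, \ldots, X_{[\omega-1]}$, and apply a parametric block-elimination subroutine. This subroutine should output a new finite family $\mathcal{P}' \subset \mathrm{D}[X_{[1]}, \ldots, X_{[\omega-1]}]$ together with a Boolean combination $\mathcal{B}'$ on the signs of $\mathcal{P}'$ such that the equivalence
\begin{align*}
(Q_\omega X_{[\omega]})\,\mathcal{B}(X_{[1]},\ldots,X_{[\omega]}) \;\equiv\; \mathcal{B}'(X_{[1]},\ldots,X_{[\omega-1]})
\end{align*}
holds on $\R^{k_1+\cdots+k_{\omega-1}}$, with the quantitative bounds $|\mathcal{P}'| = s^{O(k_\omega+1)}$, $\deg(\mathcal{P}') = d^{O(k_\omega)}$, and total arithmetic cost $s^{O(k_\omega+1)} d^{O(k_\omega)}$ in $\mathrm{D}$ — the arithmetic over $\mathrm{D}[X_{[1]},\ldots,X_{[\omega-1]}]$ being absorbed into this count because only sign conditions, not explicit symbolic expressions, of the members of $\mathcal{P}'$ need to be recorded. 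Applying the inductive hypothesis to the sentence $(Q_1 X_{[1]}) \cdots (Q_{\omega-1} X_{[\omega-1]})\,\mathcal{B}'$, with updated parameters $(s^{O(k_\omega+1)}, d^{O(k_\omega)})$, then produces the telescoped complexity
\begin{align*}
s^{(k_\omega+1)(k_{\omega-1}+1) \cdots (k_1+1)} \; d^{O(k_\omega) O(k_{\omega-1}) \cdots O(k_1)},
\end{align*}
as claimed.

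The main obstacle lies entirely in realizing the block-elimination subroutine with the sharp exponents above: $s^{O(k_\omega+1)}$ output polynomials of degree $d^{O(k_\omega)}$, with no extra multiplicative factors in the number of free variables. This requires the refined parameterized bounded algebraic sampling machinery underlying Theorem~\ref{14:the:tqe} — perturbing the input, forming auxiliary Bezout–type polynomials whose zeros sample every realizable sign condition on the fiber $X_{[\omega]} \in \R^{k_\omega}$, and invoking univariate sign determination to aggregate the Boolean outcome across the quantifier. The improvement over invoking Theorem~\ref{14:the:tqe} directly with $\ell = k_1+\cdots+k_{\omega-1}$ comes precisely from the fact that, since each intermediate formula is still a sentence, we never have to output a quantifier-free description in free variables and therefore avoid the extra $(\ell+1)$ factor in the exponent of $s$ and the $O(\ell)$ factor in the exponent of $d$ that appear in the full quantifier-elimination bound.
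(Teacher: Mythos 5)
The paper gives no proof of this statement: it is quoted verbatim from \cite[Theorem~14.14]{BPR06} as an imported black box, so there is nothing internal to compare against. Your reconstruction --- induction on the number of quantifier blocks, with a block-elimination subroutine that replaces the innermost block $X_{[\omega]}$ by a family $\mathcal{P}'$ of $s^{O(k_\omega+1)}$ polynomials of degree $d^{O(k_\omega)}$ in the remaining variables, built from parameterized bounded algebraic sampling and sign determination --- is exactly the strategy of Algorithms~14.1 (Block Elimination) and~14.3 (General Decision) in that reference, and your observation that the decision bound is the quantifier-elimination bound specialized to $\ell = 0$ free variables is also how the two complexities relate there.

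One point of bookkeeping in your sketch is not right as stated. You claim the arithmetic over $\mathrm{D}[X_{[1]},\ldots,X_{[\omega-1]}]$ is ``absorbed'' because only sign conditions of the members of $\mathcal{P}'$ need to be recorded. That cannot be the mechanism: the polynomials in $\mathcal{P}'$ are the \emph{input} to the next round of block elimination, so they must be computed explicitly as polynomials in $X_{[1]},\ldots,X_{[\omega-1]}$; only after the last block is eliminated does one reduce to evaluating signs of constants. The correct accounting is that each block elimination costs $s^{k_\omega+1}d^{O(k_\omega)}$ operations in the ring $\mathrm{D}[X_{[1]},\ldots,X_{[\omega-1]}]$, and the conversion to operations in $\mathrm{D}$ telescopes because the degrees of the intermediate polynomials in the surviving variables are controlled by $d^{O(k_\omega)}$, $d^{O(k_\omega)O(k_{\omega-1})}$, and so on. With that correction your inductive step closes and yields the stated product bound. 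The remaining substance --- that the block-elimination subroutine actually achieves $s^{k_\omega+1}$ output polynomials of degree $d^{O(k_\omega)}$ --- is the hard quantitative content of \cite{BPR06} and is correctly identified by you as the crux, but your proposal (reasonably) does not attempt to prove it.
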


\subsubsection{Sign conditions, univariate representations, and Thom encodings}\label{sign_cond_Thom_encod}
In our algorithms we will need to represent points symbolically whose coordinates are algebraic over the ground field $\R$. We follow the representation used in the book~\cite{BPR06}.

Let $\R$ be a real closed field. Given a finite family $\mathcal{P} \subset \R[Y_1,\ldots,Y_{\ell}]$, a \textit{sign condition} on $\mathcal{P}$ is an element of $\{-1,0,1\}^{\mathcal{P}}$, i.e., a mapping $\mathcal{P} \to \{-1,0,1\}$. The \textit{realization of a sign condition $\sigma$ on a set $\mathcal{Z} \subset \R^{\ell}$} is defined as
\begin{align*}
\RR(\sigma,\mathcal{Z}):=\Big\{y \in \mathcal{Z} \mid \bigwedge_{P \in \mathcal{P}} \sign(P(y)) = \sigma(P)\Big\}. 
\end{align*}
If $\RR(\sigma,\mathcal{Z}) \neq \emptyset$, then $\sigma$ is said to be \textit{realized by $\mathcal{P}$ on $\mathcal{Z}$}. The set of all sign conditions realized by $\mathcal{P}$ on $\mathcal{Z}$ is denoted by $\SI(\mathcal{P},\mathcal{Z})$.

\vspace{5px}
\noindent
An \textit{$\ell$-univariate representation} is $(\ell+2)$-tuple of polynomials $u=\big(f, g_{0},\ldots,g_{\ell}\big) \in \R[T]^{\ell+2}$, where $f$ and $g_0$ are coprime. A \textit{real $\ell$-univariate representation} of an $x \in \R^{\ell}$ is a pair $(u,\sigma)$ of an $\ell$-univariate representation $u$ and a \textit{Thom encoding} $\sigma$ of a real root $t_{\sigma}$ of $f$ such that
\begin{align*}
x=\bigg(\frac{g_1(t_{\sigma})}{g_0(t_{\sigma})},\cdots,\frac{g_{\ell}(t_{\sigma})}{g_0(t_{\sigma})}\bigg) \in \R^{\ell}.
\end{align*}
Let $\Der(f):= \big\{f, f^{(1)},f^{(2)},\ldots,f^{(\deg(f))}\big\}$ denote a list of polynomials in which $f^{(i)}$ for $i > 0$ is the formal $i^{\mathrm{th}}$-order derivative of $f$ and $\deg(f)$ stands for the degree of $f$. The Thom encoding $\sigma$ of $t_{\sigma}$ is a sign condition on $\Der(f)$ such that $\sigma(f)=0$.

\vspace{5px}
\noindent
The notion of Thom encoding of real roots 
of a polynomial will be extensively used in this paper.
The following proposition~\cite[Proposition~2.27]{BPR06} indicates that for any
$P \in \R[X]$ and a root $x \in \R$ of $P$, the sign condition $\sigma$ realized by $\Der(P)$ at $x$ characterizes the root $x$. 

\begin{proposition} [Thom's Lemma] 
\label{prop:Thom}
Let $P \subset \R[X]$ be a univariate polynomial and $\sigma \in \{-1,0,1\}^{\Der(P)}$. Then the realization of the sign condition $\sigma$ is either empty, a point, or an open interval.
\end{proposition}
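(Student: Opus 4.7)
The plan is to prove Thom's Lemma by induction on $\deg(P)$, exploiting the structural observation that $\Der(P) = \{P\} \cup \Der(P')$ whenever $\deg(P) \ge 1$. Consequently, any sign condition $\sigma \in \{-1,0,1\}^{\Der(P)}$ restricts to a sign condition $\sigma'$ on $\Der(P')$, and we may write
\[
\RR(\sigma,\R) = \RR(\sigma',\R) \cap \{x \in \R : \sign(P(x)) = \sigma(P)\}.
\]

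For the base case, when $P$ is constant (including the zero polynomial), $\Der(P) = \{P\}$, and $\sign(P(x))$ is the same for all $x \in \R$. Thus $\RR(\sigma,\R)$ is either all of $\R$ (an open interval) or empty, depending on whether $\sigma(P)$ agrees with the sign of that constant.

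For the inductive step with $\deg(P) \ge 1$, I would apply the induction hypothesis to $P'$, which has strictly smaller degree, so that $\RR(\sigma',\R)$ is empty, a single point, or an open interval. In the first two cases the conclusion is immediate, since $\RR(\sigma,\R)$ is a subset. The substantive case is $\RR(\sigma',\R) = (a,b)$, where $a \in \R \cup \{-\infty\}$ and $b \in \R \cup \{+\infty\}$. On this interval, $P'$ takes the constant sign $\sigma(P')$. Since $\deg(P) \ge 1$ makes $P'$ a nonzero polynomial with only finitely many real roots, $P'$ cannot vanish identically on $(a,b)$, which rules out $\sigma(P') = 0$. Hence $P'$ has constant nonzero sign on $(a,b)$, and so $P$ is strictly monotone there. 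A strictly monotone function has at most one zero on $(a,b)$, so $(a,b)$ decomposes into at most three pieces: two open subintervals on which $P$ has opposite nonzero signs, separated by a possible singleton $\{c\}$ where $P(c)=0$. Intersecting with the remaining constraint $\sign(P) = \sigma(P)$ selects at most one of these three pieces, yielding an empty set, a single point, or an open interval, as required.

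The main obstacle in the argument is the monotonicity step in the inductive case, specifically ensuring that the sign of $P'$ on the open realization interval $(a,b)$ cannot be zero. This is what lets us conclude that $P$ has at most one root in $(a,b)$. The subtlety is entirely resolved by the observation that a nonzero polynomial over a real closed field cannot vanish on any open interval, so the case $\sigma(P') = 0$ with an interval realization is simply incompatible with $\deg(P) \ge 1$. The rest of the proof is a straightforward case analysis driven by the induction hypothesis and the decomposition $\Der(P) = \{P\} \cup \Der(P')$.
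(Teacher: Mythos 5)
Your proof is correct and is essentially the standard argument for Thom's Lemma: the paper itself only cites \cite[Proposition~2.27]{BPR06} without proof, and the inductive argument you give (splitting $\Der(P)=\{P\}\cup\Der(P')$, using that a nonzero polynomial over a real closed field cannot vanish on an open interval to force strict monotonicity of $P$ on the interval realization of $\sigma'$) is exactly the proof in that reference. No gaps.
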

 
\subsection{Local parametrization of complex algebraic curves}
In this section, we recall basic definitions of affine and projective complex algebraic curves and some facts about their local parametrizations that will play a role later in the paper.

\subsubsection{Algebraic curves and local parametrization}
An \textit{algebraic} subset of $\mathbb{C}^{\ell}$ is the zero set of a set $\mathcal{P}$ of polynomials in $\mathbb{C}[Y_1,\ldots,Y_{\ell}]$.
We denote
\begin{align*}
\zero(\mathcal{P},\mathbb{C}^{\ell})\!:=\Big\{y \in \mathbb{C}^{\ell} \mid \bigwedge_{P \in \mathcal{P}} P(y)=0\Big\}.
\end{align*}
 As a special case, an \textit{affine complex algebraic curve} $\mathcal{C}$ is defined as the zero set of a bivariate polynomial $F \in \mathbb{C}[X,Y]$, i.e.,
\begin{align}\label{def:affine_curve}
\mathcal{C}:=\zero(F,\mathbb{C}^2)=\{(x, y) \in \mathbb{C}^2 \mid F(x, y) = 0\}.
\end{align}
If $G \in \mathbb{C}[X,Y,Z]$ is a homogeneous polynomial 
we call 
the subset of $\mathbb{P}^2(\mathbb{C})$ 
\begin{align*}
\mathcal{D}:=\{(x:y:z) \in \mathbb{P}^2(\mathbb{C}) \mid G(x,y,z) = 0\}
\end{align*}
\textit{projective complex algebraic curve}.

Let $\mathbb{C}[[T]]$ be the ring of formal power series in $T$ over $\mathbb{C}$, and let $\mathbb{C}((T))$ denote its 
field of fractions.
Let $\phi=\sum_{i=r}^{\infty} c_i T^{i} \in \mathbb{C}((T))$, where $c_i \in \mathbb{C}$ and $r \in \mathbb{Z}$. If $c_r \neq 0$, the integer $r$ is defined to be the order of $\phi$, denoted by $o(\phi)=r$. If $\phi = 0$, we define $o(\phi)=\infty$. We call a \textit{projective local parametrization} of $\mathcal{D}$ a point $ (\phi):=\big(\phi_1(T): \phi_2(T):\phi_3(T)\big) \in \mathbb{P}^2\big(\mathbb{C}((T))\big)$ such that 
\begin{align*}
G\big(\phi_1(T), \phi_2(T),\phi_3(T)\big) = 0,
\end{align*}
and for every $\psi \neq 0$ it holds that $\psi \phi_i \not \in \mathbb{C}$ for some $i$. Given an affine algebraic curve $\mathcal{C}$ and a projective local parametrization for its projective closure $\mathcal{C}^*$ (defined by the homogenization of $F$ in ~\eqref{def:affine_curve}), we define an
\textit{affine local parametrization} of $\mathcal{C}$ as 
\begin{align*}
(\phi_1(T)/\phi_3(T),\phi_2(T)/\phi_3(T)) \in \mathbb{C}((T))^2.
\end{align*}
If $\min_{i \in \{1,2,3\}}\{o(\phi_i)\} = 0$, then the \textit{center} of a projective local parametrization is defined to be $(\phi_1(0):\phi_2(0):\phi_3(0))$. A local parametrization of $\mathcal{D}$ is called \textit{reducible} if $\phi_i \in \mathbb{C}((T^s))$ for some $s > 1$ and every $i=1,2,3$. Two local parametrizations $(\phi_1:\phi_2:\phi_3)$ and $(\psi_1:\psi_2:\psi_3)$ are called \textit{equivalent} if there exists a $\varphi \in \mathbb{C}((T))$ with $o(\varphi)=1$ such that $\phi_i = \psi_i (\varphi)$ for $i=1,2,3$. A \textit{place} of $\mathcal{C}$ (resp. $\mathcal{D}$) is the equivalence class of all its affine (resp. projective) irreducible local parametrizations.
\hide{
A \textit{place} of $\mathcal{D}$ is the equivalence class of all irreducible local parametrizations of $\mathcal{D}$. 
A place of an affine algebraic curve $\mathcal{C}$ is a place of its projective closure 
whose center belongs to $\mathcal{C}$.}
The center of a place is the common center of its local parameterizations. 
\hide{
A place is equivalent to a \textit{branch} of $\mathcal{C}$, i.e., the set of all points $(\phi_1(t),\phi_2(t))$ for $t$ in a neighborhood of zero, where $\phi_1$ and $\phi_2$ are germs%
\footnote{A germ of holomorphic functions at $x_0$ means an equivalence class of all holomorphic functions which yield the same values in a neighborhood of $x_0$.} of holomorphic functions at zero. Equivalently, a branch of $\mathcal{C}$ is the zero set of an irreducible factor of $F$ over $\mathbb{C}\{X\}$.
}
A place is often referred to as a \textit{branch} of $\mathcal{C}$, i.e., the set of all points $(\phi_1(t),\phi_2(t)) \in \mathcal{C}$ for $t$ in a neighborhood of zero, where $\phi_1$ and $\phi_2$ are germs%
\footnote{A germ of holomorphic functions at $x_0$ means an equivalence class of all holomorphic functions which yield the same values in a neighborhood of $x_0$.} of holomorphic functions at zero.

\begin{example}\label{ex:local_param}
The affine complex algebraic curve (so-called cusp) defined by
\begin{align*}
    F_1(X,Y) = Y^2 - X^3
\end{align*}
has a single branch with a center at $(0,0)$, which is locally parameterized by $(t^2,t^3)$. On the other hand, the complex curve (so-called nodal cubic) defined by 
\begin{align*}
F_2(X,Y) = Y^2 - X^3 - X^2 = (Y-X\sqrt{X+1})(Y+X\sqrt{X+1})
\end{align*}
has 2 branches centered at $(0,0)$, which are locally parameterized by $(t, \phi_1(t))$ and $(t, \phi_2(t))$, see Figure~\ref{NodalCusp}. Notice that $\phi_1(t)$ and $\phi_2(t)$ are power series expansions of $X\sqrt{X+1}$ and $-X\sqrt{X+1}$, respectively. 
\begin{figure}
\centering
 \begin{minipage}[c]{0.5\textwidth}
\includegraphics[height=1.8in]{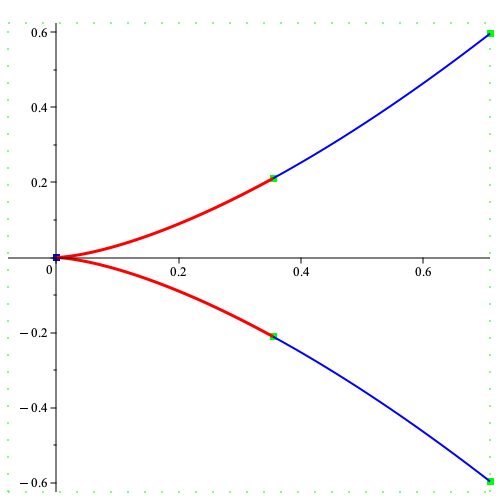}
\end{minipage}\hfill
\begin{minipage}[c]{0.5\textwidth}
\includegraphics[height=1.8in]{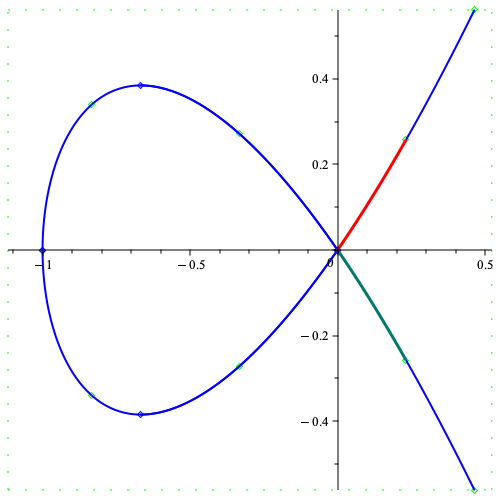}
\label{NodalCusp}
\end{minipage}
\caption{The branches of cusp (left) and nodal cubic (right) centered at $\mathbf{0}$.}
\end{figure}
\end{example}

We should also note that a point of a parametrization might be a center for more than one branch of $\mathcal{C}$.

\begin{example}[Example~2.66 in~\cite{SWP08}]\label{joint_center}
Consider the algebraic curve $\mathcal{C}$ defined by
\begin{align*}
F(X,Y)=Y^5-4Y^4+4Y^3+2X^2Y^2
-XY^2+2X^2Y+2XY+X^4+X^3.
\end{align*}
The curve $\mathcal{C}$ has two branches over $X = 0$ both centered at $\mathbf{0}$. The roots corresponding to these branches have ramification indices 1 and 2, see Figure~\ref{Double_Branches}.
 \begin{figure}
\includegraphics[height=2.0in]{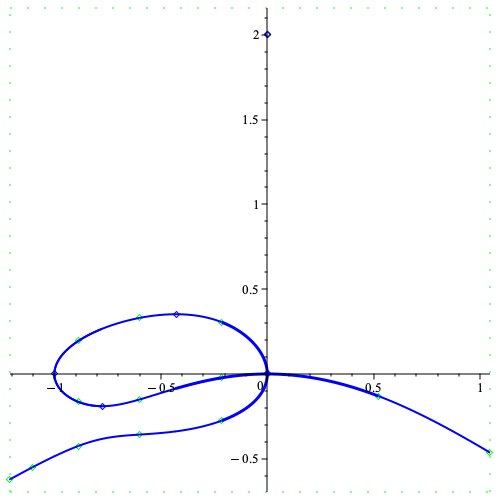}
\caption{In this example, $\mathbf{0}$ is the center of two branches.}
\label{Double_Branches}
\end{figure}
\end{example}

\begin{notation}
In this paper, a complex number is denoted by $a + b\sqrt{-1}$, where $a,b \in \mathbb{R}$, and $\exp(it)=\cos(t)+i\sin(t)$ is the complex exponential function. We use upper case letters for indeterminates of polynomials and Laurent and (convergent) power series, while lower case letters are used for the arguments of (holomorphic) functions.  
\end{notation}

\subsubsection{Newton-Puiseux theorem}
Let $\mathbb{C}\{T\}$ denote the ring of convergent power series in $T$, and assume without loss of generality that $(0,0) \in \mathcal{C}$. If $\partial F/\partial Y(0,0) \neq 0$, then the implicit function theorem provides an affine local parametrization $(T,\phi(T))$ of $F=0$ with center  $(0,0)$, where $\phi(T) \in \mathbb{C}\{T\}$. In general, the Newton-Puiseux theorem~\cite[Theorem~3.1 of Chapter IV]{W78} shows that an affine complex algebraic curve $\mathcal{C}$ has affine local parameterizations of the type $(T^q, \phi(T))$ for some $1 \le q \le d$, where $d:=\deg_Y(F)$. The proof of the Newton-Puiseux theorem is constructive and uses the Newton polygon to show that the field of Puiseux series $\mathbb{C}\langle\langle T \rangle\rangle  = \bigcup_ {s=1}^{\infty} \mathbb{C}((T^{1/s}))$ is algebraically closed.
\begin{proposition}[Theorems 3.2 and~4.1 and Section~4.2 of Chapter IV in~\cite{W78}]\label{Puiseux_Newton_Thm}
Let 
\begin{align}\label{bivariate_polynomial}
F(X,Y)=a_0 + a_1Y + \cdots + a_d Y^d \in \mathbb{C}[X][Y],
\end{align}
where $a_d \neq 0$. There exists $d$ (not necessarily distinct) Puiseux series $\psi_i(X) \in \mathbb{C} \langle \langle X \rangle \rangle$ for $i=1,\ldots,d$ such that 
\begin{align*}
F(X,Y)=a_d \prod_{i=1}^d (Y - \psi_i).
\end{align*}
\noindent
There exist $k$ places of the curve $F = 0$ with center at $(0,0)$ corresponding to each $\psi_i$ with a positive order and multiplicity $k$. Conversely, corresponding to each place $(\phi_1,\phi_2)$ of $\mathcal{C}$ with center at $(0,0)$ there exist $o(\phi_1)$ roots of $F(X,Y)=0$ with identical positive orders.  
\end{proposition}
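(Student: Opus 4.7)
The plan is to prove both claims by the Newton polygon method, which simultaneously yields the algebraic closedness of $\mathbb{C}\langle\langle X \rangle\rangle$ and the correspondence with places. For the factorization $F = a_d \prod_i (Y-\psi_i)$, it suffices to show that every nonconstant polynomial in $\mathbb{C}\langle\langle X \rangle\rangle[Y]$ has a root in $\mathbb{C}\langle\langle X \rangle\rangle$; the full factorization then follows by induction on the $Y$-degree, dividing by $(Y-\psi_i)$ at each stage.

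To exhibit a root, I would form the Newton polygon of $F(X,Y) = \sum c_{ij} X^i Y^j$, namely the lower convex hull in $\mathbb{R}^2$ of the support $\{(i,j) : c_{ij} \neq 0\}$. Each edge of slope $-\mu$ with $\mu>0$ corresponds to a candidate root of order $\mu$: the substitution $Y = cX^\mu + Y_1$ followed by collecting the lowest-order terms produces a nontrivial univariate polynomial equation in $c \in \mathbb{C}$, solvable by algebraic closedness of $\mathbb{C}$. Iterating the construction on the polynomial in $Y_1$ yields a formal series with strictly increasing rational exponents. Standard arguments (via Weierstrass preparation, or by reducing to an irreducible factor in $\mathbb{C}\{X\}[Y]$) show that the exponents share a common denominator $s$, so the output lies in $\mathbb{C}((X^{1/s}))$, and when $F \in \mathbb{C}[X][Y]$ a majorant argument delivers convergence on a neighborhood of $0$.

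For the correspondence with places, the key input is the Galois action on Puiseux series. If $\psi \in \mathbb{C}\langle\langle X \rangle\rangle$ has exact ramification index $q$ (meaning $q$ is minimal with $\psi \in \mathbb{C}((X^{1/q}))$), then setting $T = X^{1/q}$ yields an irreducible affine local parametrization $(T^q,\phi(T))$ of $\mathcal{C}$ centered at $(0,0)$ with $o(\phi_1) = q$. The substitution $T \mapsto \zeta T$ for a primitive $q$-th root of unity $\zeta$ yields an equivalent parametrization, which on the Puiseux side corresponds to the distinct conjugate root $\psi_\zeta(X) = \phi(\zeta X^{1/q}) = \sum_i c_i \zeta^i X^{i/q}$ of $F$. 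Hence the $d$ Puiseux roots of positive order partition into Galois orbits of sizes $q_1,\ldots,q_r$, each orbit defining a single place of $\mathcal{C}$ at $(0,0)$; if the associated irreducible factor in $\mathbb{C}\{X\}[Y]$ occurs with multiplicity $k$ in $F$, then the $q_j$ conjugate roots in the orbit each appear with multiplicity $k$ among the $\psi_i$, producing the stated count. Conversely, a place with parametrization $(\phi_1,\phi_2)$ satisfying $o(\phi_1) = q$ has a reparametrization obtained by a finite substitution $T \mapsto T^q$, recovering the $q$ Puiseux roots in its orbit.

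The main obstacle is controlling the Newton polygon iteration: one must show both that the exponent denominators stabilize and that the resulting formal series converges when $F$ is polynomial. Both issues are handled together by reducing to an irreducible factor of $F$ in $\mathbb{C}\{X\}[Y]$, for which the iteration stabilizes after finitely many steps on a Newton polygon segment with integer slope after a rescaling $X \mapsto X^{1/s}$, at which point the implicit function theorem supplies both finiteness of the denominator and convergence.
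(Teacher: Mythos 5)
Your argument is correct and is essentially the classical Newton--Puiseux proof that the paper does not reproduce but simply cites from Walker (Chapter~IV, Theorems~3.2 and~4.1): algebraic closedness of $\mathbb{C}\langle\langle X\rangle\rangle$ via the Newton polygon iteration, followed by identifying each Galois orbit of conjugate roots $\phi(\zeta X^{1/q})$ with a single place $(T^q,\phi(T))$ and counting multiplicities through the repeated irreducible factors. The two points you defer---stabilization of the exponent denominators and convergence---are handled exactly as you indicate, by reducing to a simple root of the characteristic equation and invoking the implicit function theorem, so there is no gap worth flagging.
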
 
\noindent
If $a_d(0) = 0$ in Proposition~\ref{Puiseux_Newton_Thm}, then $F(0,Y) = 0$ has less than $d$ roots (including multiplicity). In that case, there exist Puiseux series $\psi_i$ in Proposition~\ref{Puiseux_Newton_Thm} with negative orders, and they correspond to places at infinity.

\subsubsection{Weierstrass polynomial}
The constructive proof of the Newton-Puiseux theorem yields an iterative procedure to compute ramification indices of the Puiseux expansions in Proposition~\ref{Puiseux_Newton_Thm}, see Section~\ref{Newton_Puiseux_algorithm}. A specialized version of Proposition~\ref{Puiseux_Newton_Thm} can be obtained if $F$ is assumed to be an irreducible Weierstrass polynomial. A polynomial $W \in \mathbb{C} \{X\} [Y]$ is called a \textit{Weierstrass polynomial} of degree $d$ if
\begin{align*}
W=a_0 + a_1 Y + \cdots + a_{d-1} Y^{d-1} + Y^{d},
\end{align*}
where
$a_j(0)=0$ for $j=0,\ldots,d-1$. Given an irreducible $W$, the local parametrization of Proposition~\ref{Puiseux_Newton_Thm} can be explicitly written in terms of the degree of $W$. 
\begin{proposition}[Section 7.8 in~\cite{F01}]\label{geometric_Puiseux}
Let $W$ be irreducible over $\mathbb{C} \{X\}$. Then $(T^d, \phi(T))$ is an affine local parametrization of $W = 0$, which determines a unique branch with center at $(0,0)$. Furthermore, $W = 0$ has $d$ distinct roots near $x = 0$, and they are all described by
\begin{align*}
 \psi_i (X)= \sum_{j = 1}^{\infty} c_{j} \Big(\exp\big(2\pi \sqrt{-1} (i-1)/d\big)X^{1/d}\Big)^{j}, \qquad i=1,\ldots,d. 
\end{align*}
\end{proposition}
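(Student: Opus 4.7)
The plan is to derive this from the Newton--Puiseux theorem (Proposition~\ref{Puiseux_Newton_Thm}) together with a Galois-theoretic argument that exploits the irreducibility of $W$. First, Proposition~\ref{Puiseux_Newton_Thm} yields a factorization
\[
W(X,Y) = \prod_{i=1}^{d}(Y - \psi_i(X))
\]
with $\psi_i(X) \in \mathbb{C}\langle\langle X\rangle\rangle$. Because $W$ is a Weierstrass polynomial, $W(0,Y) = Y^d$, and hence $\psi_i(0) = 0$ for all $i$; in particular, every $\psi_i$ has strictly positive order.

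Next, I would let $N$ be a common denominator of the fractional exponents appearing in the $\psi_i$, so that all roots lie in the finite Galois extension $\mathbb{C}((X^{1/N}))$ of $\mathbb{C}((X))$. The Galois group is cyclic, generated by $\sigma: X^{1/N} \mapsto \exp(2\pi\sqrt{-1}/N)\,X^{1/N}$, and since $W$ has coefficients in $\mathbb{C}((X))$, $\sigma$ permutes the roots $\psi_1, \ldots, \psi_d$. Grouping these roots into $\langle \sigma \rangle$-orbits and forming, for each orbit $\mathcal{O}$, the product $\prod_{\psi \in \mathcal{O}}(Y - \psi)$ produces a monic polynomial whose coefficients are $\sigma$-invariant and therefore lie in $\mathbb{C}((X))$. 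A standard analyticity argument, discussed below, upgrades each of these orbit factors to a polynomial in $\mathbb{C}\{X\}[Y]$. The irreducibility of $W$ over $\mathbb{C}\{X\}$ then forces the existence of a single orbit of size $d$.

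A single orbit of size $d$ is equivalent to the statement that no non-trivial power of $\sigma$ fixes $\psi_1$; consequently the ramification index of $\psi_1$ is exactly $d$. We may then write $\psi_1(X) = \phi(X^{1/d})$ with $\phi(T) = \sum_{j\ge 1} c_j T^j \in \mathbb{C}\{T\}$, and the remaining $d-1$ roots are precisely the distinct Galois conjugates
\[
\psi_i(X) = \phi\big(\exp(2\pi\sqrt{-1}(i-1)/d)\,X^{1/d}\big), \qquad i=1,\ldots,d,
\]
which is the stated formula. The map $T \mapsto (T^d, \phi(T))$ is then an affine local parametrization of $W = 0$ centered at $(0,0)$; the single-orbit conclusion shows this parametrization is irreducible and hence determines a single branch at the origin.

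The main obstacle I anticipate is the careful justification that each orbit-invariant factor of $W$, a priori living only in $\mathbb{C}((X))[Y]$, actually has coefficients in the smaller ring $\mathbb{C}\{X\}[Y]$; this is where the hypothesis of irreducibility over $\mathbb{C}\{X\}$ (rather than over the larger formal field) must be brought to bear. The cleanest resolution is via Weierstrass preparation, which establishes a bijection between irreducible factors of $W$ in $\mathbb{C}\{X\}[Y]$ and Galois orbits of its Puiseux roots at the origin, so that irreducibility in $\mathbb{C}\{X\}[Y]$ is equivalent to transitivity of the Galois action on the $d$ roots.
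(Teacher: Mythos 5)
This proposition is quoted from \cite[Section~7.8]{F01} and the paper supplies no proof of its own, so your argument can only be measured against the standard one --- and it is essentially that standard argument: factor $W$ over the Puiseux field, let the cyclic Galois group of $\mathbb{C}((X^{1/N}))/\mathbb{C}((X))$ act on the roots, and use irreducibility of $W$ to force a single orbit of size $d$, which gives ramification index exactly $d$ and the conjugate roots $\phi(\zeta^k X^{1/d})$. The structure is correct. Two points deserve more care than you give them. First, Proposition~\ref{Puiseux_Newton_Thm} as stated applies to $F\in\mathbb{C}[X][Y]$, whereas $W\in\mathbb{C}\{X\}[Y]$; you need the convergent-coefficient version of Newton--Puiseux (the algebraic closure of $\operatorname{Frac}(\mathbb{C}\{X\})$ is the field of \emph{convergent} Puiseux series) to conclude $\phi\in\mathbb{C}\{T\}$ and to speak of actual roots ``near $x=0$'' --- you assert $\phi\in\mathbb{C}\{T\}$ without justification. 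Second, your ``main obstacle'' (descending the orbit factors to $\mathbb{C}\{X\}[Y]$) is resolved not by invoking Weierstrass preparation --- which, as you phrase it, presupposes the very orbit--factor bijection you are proving --- but simply by observing that once the roots are known to be convergent and of positive order, the elementary symmetric functions of an orbit are $\sigma$-invariant elements of $\mathbb{C}\{X^{1/N}\}$ and hence lie in $\mathbb{C}\{X\}$; the product over orbits is then a genuine factorization of $W$ in $\mathbb{C}\{X\}[Y]$, and irreducibility does the rest. With those repairs the proof is complete and matches the cited classical treatment.
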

\begin{example}
The condition on the irreducibility of $W$ is an integral part of Proposition~\ref{geometric_Puiseux} and cannot be dropped. For instance, the reducible Weierstrass polynomial
\begin{align*}
 W(X,Y)=(Y^3-X^2)(Y^2-X^3)=Y^5-X^3Y^3-X^2Y^2+X^5 \in \mathbb{C}[X,Y]
 \end{align*}
 has roots $Y=X^{\frac23}$ and $Y=\pm X^{\frac32}$, while there is no local parametrization of the type $(T^5,\phi(T))$ around $x = 0$. 
 \end{example} 
\noindent
Let $\mathbb{C}\{X,Y\}$ denote the ring of convergent power series in $X$ and $Y$. Then Proposition~\ref{geometric_Puiseux} can be applied to any $F \in \mathbb{C}[X,Y]$ with $F(0,Y) \neq 0$. This fact immediately follows from the \textit{Weierstrass preparation theorem}.
\begin{proposition}[Section~6.7 in~\cite{F01}]\label{WPT}
Suppose that $F \in \mathbb{C}\{X,Y\}$ such that 
\begin{align*}
F(0,Y) \neq 0 \ \ \text{and} \ \ o(F(0,Y)) = d.
\end{align*}
Then $F$ can be uniquely written as $F = U W$, where $U \in \mathbb{C} \{X,Y\}$ is a unit element and $W \in \mathbb{C} \{ X \}[Y]$ is a Weierstrass polynomial of degree $d$. In particular, if $F \in \mathbb{C} \{X\}[Y]$, then $U \in \mathbb{C} \{X\}[Y]$ holds as well. 
\end{proposition}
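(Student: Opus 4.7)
The plan is the classical analytic proof, built on Rouché's theorem and a Cauchy-type residue formula. Since $F(0,Y)$ has a zero of exact order $d$ at $Y=0$, I first choose $r>0$ small enough that $Y=0$ is the only zero of $F(0,Y)$ in the closed disc $|Y|\le r$. By continuity of $F$ and Rouché's theorem, there is a polydisc around $0$ in the $X$-variable on which $F(X,\cdot)$ has exactly $d$ zeros $y_1(X),\dots,y_d(X)$ (counted with multiplicity) inside $|Y|<r$, and none on the boundary.

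I then define the candidate Weierstrass polynomial by $W(X,Y):=\prod_{i=1}^d (Y-y_i(X))$. To see that its coefficients lie in $\mathbb{C}\{X\}$, I express the Newton power sums via the residue formula
\[
p_k(X)\;=\;\sum_{i=1}^d y_i(X)^k\;=\;\frac{1}{2\pi\sqrt{-1}}\oint_{|Y|=r} Y^k\,\frac{\partial_Y F(X,Y)}{F(X,Y)}\,dY,
\]
which is manifestly holomorphic in $X$ on the polydisc. Newton's identities then give the elementary symmetric functions, hence the coefficients of $W$, as elements of $\mathbb{C}\{X\}$; since each $y_i(0)=0$, these coefficients vanish at $X=0$, so $W$ is genuinely a Weierstrass polynomial of degree $d$.

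Next I set $U:=F/W$ on the complement of $\{W=0\}$ inside the polydisc. By construction, for each small $X$ the polynomial $W(X,\cdot)$ and the slice $F(X,\cdot)$ have the same zeros inside $|Y|<r$ with the same multiplicities, so $U$ is locally bounded there and extends by Riemann's removable singularity theorem to a holomorphic function with $U(0,0)\neq 0$; thus $U\in\mathbb{C}\{X,Y\}$ is a unit. Uniqueness of the decomposition follows by comparing zero sets slice-by-slice: any $W'$ arising in another factorization $F=U'W'$ must be the monic polynomial in $Y$ whose roots are the zeros of $F(X,\cdot)$ near $Y=0$, which forces $W'=W$ and hence $U'=U$. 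For the final assertion, when $F\in\mathbb{C}\{X\}[Y]$, polynomial long division in $\mathbb{C}\{X\}[Y]$ gives $F=QW+R$ with $\deg_Y R<d$; the companion Weierstrass division theorem (proved by the same residue method) ensures uniqueness of the remainder, forcing $R=0$ and thus $U=Q\in\mathbb{C}\{X\}[Y]$.

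The main obstacle is justifying that the matching of zero sets really holds throughout the polydisc rather than merely at the origin, since that is what powers both the removability argument for $U$ and the uniqueness claim. This is handled precisely by the Rouché step: by shrinking the $X$-polydisc further if necessary, one guarantees that none of the $y_i(X)$ cross the boundary $|Y|=r$, so the count of zeros remains exactly $d$ and the divisors of $F$ and $W$ in the polydisc coincide.
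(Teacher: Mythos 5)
Your proof is correct: it is the classical analytic proof of the Weierstrass preparation theorem (Rouch\'e to count the zeros of the slices $F(X,\cdot)$, residue formulas for the Newton power sums to see that the coefficients of $W$ lie in $\mathbb{C}\{X\}$, removable singularities for the unit $U$, and Weierstrass division for the final polynomial statement). The paper does not prove this proposition at all --- it cites it from Section~6.7 of the reference [F01] --- and your argument is essentially the standard proof found there, so there is nothing to contrast. Two small points you could tighten if writing this out in full: the extension of $U=F/W$ across $\{W=0\}$ is most cleanly made \emph{jointly} holomorphic via the Cauchy integral $U(X,Y)=\frac{1}{2\pi\sqrt{-1}}\oint_{|\zeta|=r}\frac{F(X,\zeta)}{W(X,\zeta)}\frac{d\zeta}{\zeta-Y}$ rather than by a per-slice removable-singularity argument; and in the uniqueness step you should note that any competing factor $W'$ is forced to have degree exactly $d$, since $F(0,Y)=U'(0,Y)W'(0,Y)$ with $U'(0,0)\neq 0$ gives $o(W'(0,Y))=d$.
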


\hide{
In a more general setting, we can obtain local parameterizations of an affine algebraic curve using the Riemann monodromy theorem, see e.g.,~\cite[Theorem~4.5.3]{JS87}. Suppose that $F$ in~\eqref{bivariate_polynomial} is irreducible over $\mathbb{C}$ and $\partial F/\partial Y(0,0) = 0$. By the irreducibility of $F$ there exist an $x_0 \in \mathbb{C}$ in a neighborhood of $x = 0$ such that $F(x_0,y)$ has distinct roots $y_1,\ldots,y_d$. The implicit function theorem implies the existence of $n$ germs $\phi_1(x),\ldots,\phi_d(x)$ of holomorphic functions at $x_0$ such that $\phi_i(x_0) = y_i$ and $F(x,\phi_i(x)) = 0$ for $i=1,\ldots,d$ and for all $x$ in a neighborhood of $x_0$ and $y \in \mathbb{C}$~\cite[Lemma~5.13.2]{BG91}. The \textit{analytic continuation} of $\phi_1(x),\ldots,\phi_d(x)$ along the boundary of a sufficiently small disk centered at $x = 0$ results in new roots $\phi_{\sigma(1)}(x),\ldots,\phi_{\sigma(d)}(x)$ of $F = 0$ which are simply a permutation of the germs $\phi_1(x),\ldots,\phi_d(x)$, with $\sigma$ being a disjoint union of cycles. A cycle of length 1 is associated to a regular point of the curve, i.e., a point at which $(\partial F/\partial X,\partial F/\partial Y)$ does not vanish. Further, every cycle $\{1,\ldots,q\}$ of length $q>1$ corresponds to a branch of the curve, and $x = 0$ is a branch point. In that case, the Puiseux expansions of the roots of $F=0$ near $x=0$ are given by
\begin{align}\label{roots_of_cycles}
\psi_i(X)=\sum_{j \in \mathbb{Z}} c_j \Big(\exp\big(2\pi \sqrt{-1}(i-1) /q\big)^j X^j\Big)^{1/q}, \qquad i=1,\ldots,q.
\end{align}
\begin{remark}
If $a_d(0) \neq 0$, then $F(0,y)$ has $d$ complex roots. In that case, the Puiseux series $\phi_1(x),\ldots,\phi_q(x)$ are all convergent as $x \to 0$, which means $j \in \mathbb{Z}$ in~\eqref{roots_of_cycles} can be replaced by $j \ge 0$. 
\end{remark}
}

\begin{remark}\label{branch_alt_def}
Locally, a branch of $\mathcal{C}$ at a point (say $\mathbf{0} \in \mathcal{C}$) is the set of zeros of an irreducible factor of $F$ over $\mathbb{C}\{X\}$, see~\cite[Page~123]{F01}. For instance, $F_1 = 0$ in Example~\ref{ex:local_param} has one branch  
with center $\mathbf{0}$
because $F_1$ is irreducible over $\mathbb{C}\{X\}$. On the other hand, $F_2$ has two irreducible factors in $\mathbb{C}\{X\}[Y]$ and thus $F_2 = 0$ has two branches 
with center $\mathbf{0}$.
\end{remark}

\subsection{Analyticity of the central path}\label{CP_SDO}
The higher-order derivatives of the central path with respect to $\mu$ are all well-defined, by the  implicit function theorem. More concretely, the $i^{\mathrm{th}}$-order derivatives of the central path for $i \ge 1$ can be obtained by solving
\begin{align*}
\langle A^{i} , X^{(i)} \rangle&=0, \qquad  i=1,\ldots, m,\\[-1\jot]
\sum_{i=1}^m y^{(i)}_i A^i+S^{(i)}&=0,\\[-1\jot]
X^{(i)}S(\mu)+S(\mu)X^{(i)}+X(\mu)S^{(i)}+S^{(i)}X(\mu)&=\\
\begin{cases} I_n \ \ &i=1,\\
-\sum_{j=1}^{i-1} {i \choose j}\big(X^{(j)}S^{(i-j)}+S^{(j)}X^{(i-j)}\big), \ \ &i>1,
\end{cases}
\end{align*} 
where $(X^{(i)},y^{(i)},S^{(i)})$ denotes the $i^{\mathrm{th}}$-order derivative, and the coefficient matrix is always nonsingular.
The analyticity of the central path at $\mu = 0$ follows analogously if the Jacobian is nonsingular at the unique solution, see~\cite[Section~6]{GS98}, as a results of strict complementarity and nondegeneracy conditions, see~\cite[Theorem 3.1]{AHO98} and~\cite[Theorem 3.1]{H98}.

\vspace{5px}
\noindent
Obviously, the bounds~\eqref{Lips_Bounds} and~\eqref{general_bounds} do not imply the boundedness of the derivatives as $\mu \downarrow 0$, see e.g., Example~\ref{derivatives_diverge}. In fact, the convergence of derivatives (of any order) is guaranteed only in the presence of the strict complementarity condition~\cite[Theorem~1]{H02}. For instance, consider an orthogonal basis $Q:=\big(Q_{\mathcal{B}},Q_{\mathcal{T}}, Q_{\mathcal{N}}\big)$ for the 3-tuple of mutually orthogonal subspaces%
\footnote{The subspaces $\column(X^{**})$ and $\column(S^{**})$ are orthogonal by the complementarity condition $XS=0$.} 
\begin{align*}
\Big(\column(X^{**}),\column(S^{**}),(\column(X^{**})+\column(S^{**}))^{\perp}\Big),
\end{align*}
where $\column(\cdot)$ denotes the column space of a matrix. If $Q_{\mathcal{T}} \neq \{\mathbf{0}\}$, then the first-order derivative of the central path fails to converge, because
\begin{align*}
Q^T_{\mathcal{T}} X^{(1)}(\mu) Q_{\mathcal{T}} Q^T_{\mathcal{T}}S(\mu) Q_{\mathcal{T}} + Q^T_{\mathcal{T}}X(\mu) Q_{\mathcal{T}}Q^T_{\mathcal{T}}S^{(1)}(\mu)Q_{\mathcal{T}}= I_{n_{\mathcal{T}}},
\end{align*}
while both $Q^T_{\mathcal{T}}S(\mu) Q_{\mathcal{T}} \to 0$ and $Q^T_{\mathcal{T}}X(\mu) Q_{\mathcal{T}} \to 0$ as $\mu \downarrow 0$
\hide{
\footnote{There exists a lower bound $2^{1-n}$ on the convergence rates of $Q^T_{\mathcal{T}} X(\mu) Q_{\mathcal{T}}$ and $Q^T_{\mathcal{T}} S(\mu) Q_{\mathcal{T}}$~\cite[Theorem~3.8]{MT19}.}}. All this indicates that the central path has no analytic extension to $\mu=0$.
\begin{example}\label{derivatives_diverge}
The minimization of a linear objective function over the \emph{3-elliptope}, see Figure~\ref{3elliptope_fails}, can be cast into a SDO problem:
\begin{align}\label{feasible_3elliptope}
\min\bigg\{4x-4y-2z \mid 
\begingroup 
\setlength{\tabcolsep}{.75pt} 
\renewcommand{\arraystretch}{.8}
\begin{pmatrix} 1 & \ x & \ y\\x & \ 1 & \ z\\y & \ z & \ 1 \end{pmatrix}
\endgroup \succeq 0\bigg\}.
\end{align} 
\noindent
The unique solution of~\eqref{feasible_3elliptope} is given by
\begin{align}\label{unique_optimal_solution}
\setlength{\arraycolsep}{1pt}
X^*=\begin{pmatrix} \ \ 1 & \ -1 & \ \ \ 1 \\ -1 & \ \ \ 1 & \ -1\\ \ \ 1 & \ -1 & \ \ \ 1 \end{pmatrix}, \ \
y^*=(-4, \ -1, \ -1)^T,\ \
S^*=\begin{pmatrix} \ \ 4 & \ \ \ 2 & \ -2 \\ \ \ 2 & \ \ \ 1 & \ -1\\ -2 & \ -1 & \ \ \ 1 \end{pmatrix},
\end{align}
which is not strictly complementary. 

The graph of $X_{12}(\mu)$ can be described as the set of all $(\mu,t)$ with $\mu > 0$ satisfying 
\begin{align}\label{central_path_defining_equation}
F(\mu,T):=2T^3+(2-\mu/2)T^2-(\mu+2)T-2=0,
\end{align}
\begin{align*}
\begingroup 
\setlength{\tabcolsep}{.75pt} 
\renewcommand{\arraystretch}{.8}
\begin{pmatrix} 1  & T & -T\\T & 1 & -T^2+\mu T/2+1\\-T & -T+\mu T/2+1 & 1 \end{pmatrix} \succ 0,
\endgroup
\\ 
\begingroup 
\setlength{\tabcolsep}{.75pt} 
\renewcommand{\arraystretch}{.8}
\begin{pmatrix} \mu-4T & 2 & -2\\2 & -2/T-1 & -1 \\-2 & -1 & -2/T-1 \end{pmatrix} \succ 0.
\endgroup
\end{align*}
By the uniqueness of~\eqref{unique_optimal_solution}, we must have $X_{12}(\mu) \to -1$ as $\mu \downarrow 0$. Then it follows from~\eqref{central_path_defining_equation} that 
\begin{align*}
X^{(1)}_{12}(\mu)=\frac{X^2_{12}(\mu)/2+X_{12}(\mu)}{6X^2_{12}(\mu)+(4-\mu)X_{12}(\mu)-(\mu+2)} \to \infty
\end{align*}
as $\mu \downarrow 0$. This explains the tangential convergence of the central path in Figure~\ref{3elliptope_fails}.  

 \begin{figure}
\includegraphics[height=1.5in]{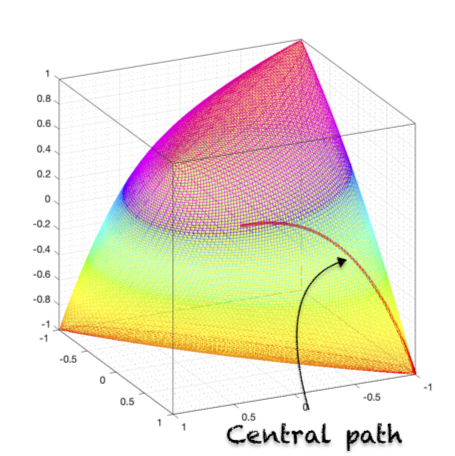}
\caption{The derivatives of the central path fail to exist at $\mu = 0$.}
\label{3elliptope_fails}
\end{figure}
\end{example}
\noindent
The algebraic curve $F = 0$ in Example~\ref{derivatives_diverge} has two branches over $\mu = 0$ centered at $(0,-1)$ and $(0,1)$, as demonstrated by Figure~\ref{3elliptope_alg_curve}.
\hide{
Since $\partial{F}/\partial \mu(0,-1) = 0$, the condition of the implicit function theorem fails, which makes $(0,-1)$ a branch point of the curve.}
By invoking the ``Algcurves" package in Maple%
\footnote{Available at \url{https://www.maplesoft.com/}} we can numerically compute the Puiseux expansions of all roots of $F = 0$ near $\mu = 0$ as follows
\begin{align*}
T_{1}(\mu)&=-1+\frac{\sqrt{8}}{8} \mu^{\frac12}+\frac{1}{32} \mu -\frac{11\sqrt{8}}{2048} \mu^{\frac32}-\frac{3}{512} \mu^{2}-\frac{121\sqrt{8}}{1048576} \mu^{\frac52}\\
&\quad+\frac{15}{32768} \mu^3+\frac{19405\sqrt{8}}{268435456}\mu^{\frac72}+ \cdots, \\
T_{2}(\mu)&=-1-\frac{\sqrt{8}}{8} \mu^{\frac12}+\frac{1}{32} \mu +\frac{11\sqrt{8}}{2048} \mu^{\frac32}-\frac{3}{512} \mu^{2}+\frac{121\sqrt{8}}{1048576} \mu^{\frac52}\\
&\quad+\frac{15}{32768} \mu^3-\frac{19405\sqrt{8}}{268435456}\mu^{\frac72}+ \cdots ,\\
T_{3}(\mu)&=1+\frac{3}{16} \mu+\frac{3}{256} \mu^{2} -\frac{15}{16384} \mu^{3}-\frac{15}{262144}\mu^4+\frac{345}{16777216} \mu^5\\
&\quad-\frac{21}{33554432}\mu^6-\frac{1869}{4294967296}\mu^7+\cdots\hfill ,
\end{align*}
where $T_1$ is the Puiseux expansion of the $X_{12}$ coordinate of the central path.
\hide{$T_2$ corresponds to an exterior semi-algebraic path converging to the same limit point, and $T_3$ is a semi-algebraic path converging to an infeasible vector.}
The order of the Puiseux series $T_1$ indicates the non-Lipschitzian convergence 
\begin{align*}
\|X(\mu)-X^{**}\| = O(\sqrt{\mu}) \ \ \text{and} \ \ \|S(\mu)-S^{**}\| = O(\sqrt{\mu}).
\end{align*}
\begin{figure}
\center
\begin{minipage}[c]{0.5\textwidth}
\includegraphics[height=2.2in]{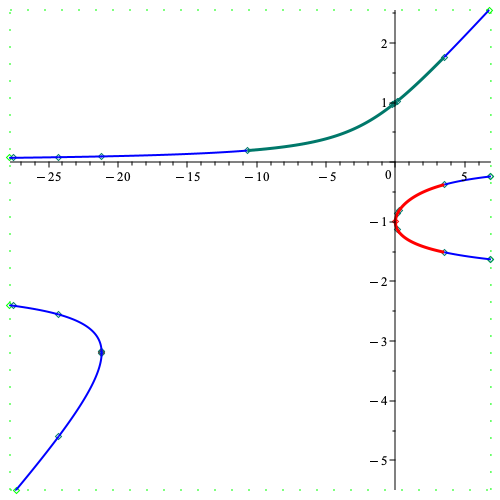}
\end{minipage}
\begin{minipage}[c]{0.4\textwidth}
\includegraphics[height=1.5in]{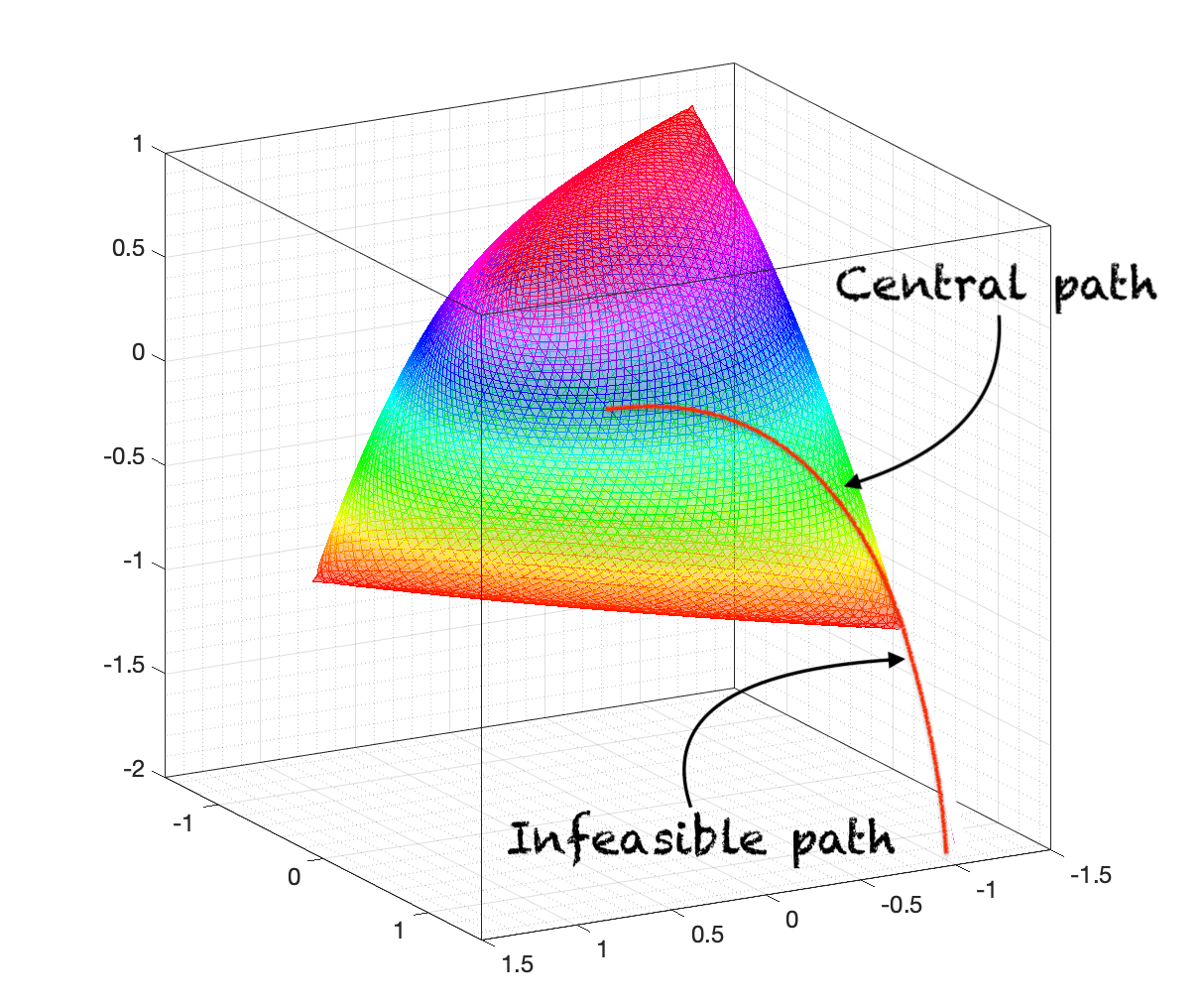}
\end{minipage}
\caption{$T_1(\mu)$ (the upper red segment of the branch) is the expansion of $X_{12}(\mu)$; $T_2(\mu)$ (the lower red segment of the branch) is the expansion of a semi-algebraic path converging to $X^{**}_{12}$ from the exterior of the positive semi-definite cone; $T_3(\mu)$ (the green segment) is the expansion of a semi-algebraic path converging to an infeasible value.}
\label{3elliptope_alg_curve}
\end{figure}

\hide{
\begin{remark}
The ramification indices in Example~\ref{derivatives_diverge} can be also read off from the monodromy group of $F = 0$. Using the Algcurves package of Maple, we can numerically locate the branch points of $F = 0$ as $\mu=-1.41253 \pm 4.71 \sqrt{-1}$, $\mu-21.175$, and $\mu=0$ with cycles $(1,3)$, $(1,2)$, $(1,2)$, respectively. As with the branch point $\mu = 0$, all this means that the analytic continuation of a holomorphic solution $\phi_1(\mu)$ near $\mu = 0$ results in $\phi_2(\mu)$ when complex $\mu$ wraps once around the branch point $\mu = 0$. Thus, the cycle length 2 means the ramification index $q = 2$. 
\end{remark}}

\section{Reparametrization of the central path}\label{symbolic_algorithms}
Although the Lipschitzian bounds~\eqref{Lips_Bounds} fail to exist in the absence of the strict complementarity condition, we can still exploit the local information around the center point to recover the analyticity of the central path. This remedial action can be applied to Example~\ref{derivatives_diverge}, where the ramification index of $T_1$ suggests the local parametrization $(\mu^{2},\Phi(\mu))$ of the curve $F = 0$ around $\mu = 0$, where 
\begin{align*}
\Phi(\mu)=-1+\frac{\sqrt{8}}{8} \mu+\frac{1}{32} \mu^2 -\frac{11\sqrt{8}}{2048} \mu^{3}-\frac{3}{512} \mu^{4}-\frac{121\sqrt{8}}{1048576} \mu^{5}+\frac{15}{32768} \mu^6+ \ldots, 
\end{align*}
see Figure~\ref{3elliptope_reparam}. Thus, one may adopt a reparametrization $\mu \mapsto \mu^{\rho}$ under which the central path is analytic at $\mu = 0$, where $\rho$ is a positive integer multiple of the ramification index of the Puiseux expansion. 

In the worst-case scenario, the magnitude of $q$ depends exponentially on $n$.
 \begin{figure}
\includegraphics[height=2.0in]{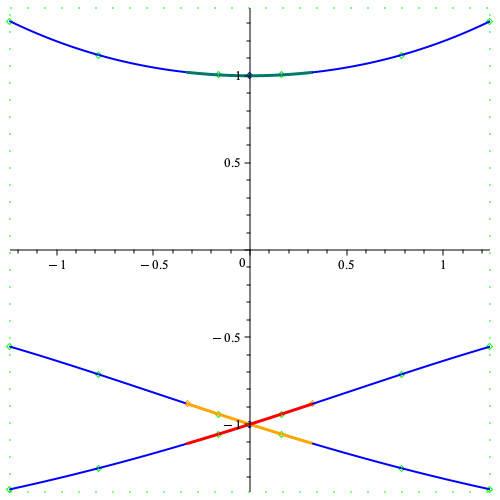}
\caption{The algebraic curve~\eqref{central_path_defining_equation} after the reparametrization $\mu \to \mu^2$. The reparametrized central path (red segment) is analytic at $\mu = 0$.}
\label{3elliptope_reparam}
\end{figure}
\begin{example}[Example~3.3 in~\cite{Kl02}]\label{lower_bound_analyticity}
Consider the following SDO problem in dual form $(\mathrm{D})$:
\begin{align*}
\max\Bigg\{-y_n \mid S=
\begingroup 
\setlength{\tabcolsep}{.75pt} 
\renewcommand{\arraystretch}{.8}
\begin{pmatrix} 1 & y_1 & y_2 &\ldots & y_{n-1}\\y_1 & y_2 & 0 & \ldots & 0\\y_2 & 0 & y_3 & \ddots & \vdots\\ \vdots & \vdots & \ddots & \ddots & 0\\y_{n-1} & 0 & \ldots & 0 & y_n \end{pmatrix}
\endgroup \succeq 0\Bigg\},
\end{align*}
which has a unique solution $(y^{**},S^{**})$ with $y^{**}_i=0$ for $i=1,\ldots,n$. In this case, $y_2(\mu)=O(\mu^{2^{-(n-2)}})$, which implies a reparametrization $\mu \mapsto \mu^{q}$ with $q\ge 2^{n-2}$. 
\end{example} 
\noindent
In view of Example~\ref{derivatives_diverge}, our goal is to adopt a semi-algebraic approach to identify bivariate polynomials, analogous to $F(\mu,T)$ in~\eqref{central_path_defining_equation}, that describe the tail end of the central path in a coordinate-wise manner. All we need then are the ramification indices of the Puiseux expansions corresponding to the roots of these bivariate polynomials around $\mu = 0$, which give rise to a feasible $\rho$ for Problem~\ref{CP_Analytic_Extension}.

The basis elements of our semi-algebraic approach  are the real univariate representation of the central path for sufficiently small $\mu$, the quantifier elimination (see Theorem~\ref{14:the:tqe}), and the Newton-Puiseux theorem (see Proposition~\ref{Puiseux_Newton_Thm}), which we elaborate on in Sections~\ref{semi-algebraic_central_path} and~\ref{Puiseux_Expansion}.

\subsection{Real univariate representation of the central path}\label{semi-algebraic_central_path}
The central path system~\eqref{central_path} can be considered as a $\mu$-infinitesimally deformed polynomial system $\mathcal{G} \subset \mathbb{Z}[\mu][V_1,\ldots,V_{\bar{n}}]$, i.e., a $\mu$-infinitesimal deformation of the polynomial system
\begin{align*}
\big\{\mathbf{A} \vectorize(X)&=b, \ \mathbf{A}^T y + \vectorize(S-C)=0, \ \vectorize(XS)=0\big\},
\end{align*}
whose zeros belong to $\mathbb{C}\langle\mu\rangle^{\bar{n}}$. Without loss of generality, we consider the restriction of the central path to the interval $(0,1]$. Recall that the central path is uniformly bounded, i.e., there exists, see Lemma~\ref{central_path_ball}, a rational $\varepsilon > $ such that 
\begin{align}\label{bound_on_algebraic_set}
\|(X({\mu}),y({\mu}),S({\mu}))\| \le 1/\varepsilon, \qquad \forall \mu \in (0,1].
\end{align} 
\noindent
Since we are interested in the central path which is bounded over $(0,1]$, we only characterize the bounded zeros of $\mathcal{G}$ in $\mathbb{R}\langle \mu \rangle^{\bar{n}}$, which are defined as
\begin{align*}
\zero_b\big(\mathcal{G},\mathbb{R}\langle \mu \rangle^{\bar{n}}\big)\!:=\zero\big(\mathcal{G},\mathbb{R}\langle \mu \rangle^{\bar{n}}\big) \cap \mathbb{R}\langle \mu \rangle^{\bar{n}}_b,
\end{align*}
where $\mathbb{R}\langle \mu \rangle^{\bar{n}}_b$ denotes the subring of 
$\mathbb{R}\langle \mu \rangle$ consisting of elements
which are bounded over $\mathbb{R}$. The central path for sufficiently small positive $\mu$ is a bounded solution of $\zero\big(\mathcal{G},\mathbb{R}\langle \mu \rangle^{\bar{n}}\big)$. Therefore, the limit point $(X^{**},y^{**},S^{**})$ is contained in $\lim_{\mu} \zero_b\big(\mathcal{G},\mathbb{R}\langle \mu \rangle^{\bar{n}}\big)$.

\subsubsection{Parameterized bounded algebraic sampling}
Our approach to characterize the bounded zeros of $\mathcal{G}$ is to compute real univariate representation of the central path when $\mu$ is sufficiently small; this describes the coordinate $v_i(\mu)$ of the central path as a rational function of $\mu$ and the roots of a univariate polynomial. To that end, we define the two polynomials $Q \in \mathbb{Z} [\mu, V_1,\ldots, V_{\bar{n}}]$ and $\tilde{Q} \in \mathbb{Z} [\mu, V_1,\ldots, V_{\bar{n}+1}]$ as follows 
\begin{align}
Q&:= \|\mathbf{A}x-b\|^2 + \|\mathbf{A}^Ty+s-c\|^2+ \|XS-\mu I_n\|^2, \nonumber \\
\tilde{Q}&:=Q^2 + \big(\varepsilon^2(V_1^2+\ldots+V_{\bar{n}+1}^2)-1\big)^2, \label{central_path_defining_polynomial}
\end{align}
where $\varepsilon$ is defined in~\eqref{bound_on_algebraic_set}. Notice that for every fixed $\mu \in (0,1]$, $\zero(\tilde{Q}({\mu}),\mathbb{R}^{\bar{n}+1})$ is nonempty (it contains a central solution), and 
\begin{align*}
\bigcup_{\mu \in \mathbb{R}} \zero(\tilde{Q}({\mu}),\mathbb{R}^{\bar{n}+1})
\end{align*}
is bounded, because $\zero(\tilde{Q}({\mu}),\mathbb{R}^{\bar{n}+1})$ is the intersection of the cylinder based on $\zero(\mathcal{P}_{\mu},\mathbb{R}^{\bar{n}})$ and an $\bar{n}$-sphere, where 
\begin{align*}
\mathcal{P}_{\mu}:=\big\{\mathbf{A} \vectorize(X)&=b, \ \mathbf{A}^T y + \vectorize(S-C)=0, \ \vectorize(XS-\mu I)=0\big\}. 
\end{align*}
\noindent
\hide{
The idea here is to utilize the parameterized bounded algebraic sampling algorithm~\cite[Algorithm~12.18]{BPR06} with input $\tilde{Q}$ to describe a finite set of \textit{$X_1$-pseudo critical points}~\cite[Definition~12.41]{BPR06} on $\zero\big(\tilde{Q}(\mu),\mathbb{R}^{\bar{n}+1}\big)$, which meets every connected component of $\zero\big(\tilde{Q}(\mu),\mathbb{R}^{\bar{n}+1}\big)$~\cite[Proposition~12.42]{BPR06}, see also~\cite[Section~3.1]{BM22}.
}
The idea here is to utilize the parameterized bounded algebraic sampling algorithm~\cite[Algorithm~12.18]{BPR06} with input $\tilde{Q}$ to describe a finite set of sample points, which for every $\mu \in \mathbb{R}$ meets every connected component of $\zero\big(\tilde{Q}(\mu),\mathbb{R}^{\bar{n}+1}\big)$, see also ~\cite[Proposition~12.42]{BPR06}. The description of these sample points are given by a set $\mathcal{U}$ of parameterized univariate representations
\begin{align*}
u:=(f,g)=\big(f,(g_0,g_{1},\ldots, g_{\bar{n}+1})\big) \in \mathbb{Z}[\mu,T]^{\bar{n}+3}.
\end{align*}
We will prove in Lemma~\ref{how_much_small} that for sufficiently small positive $\mu$, there exists a univariate representation $u$ and a real root $t_{\sigma}$ of $f$ with Thom encoding $\sigma$ such that   
\begin{align}\label{central_path_representation}
v_{i}(\mu)&=\frac{g_{i}(\mu, t_{\sigma})}{g_{0}(\mu, t_{\sigma})} \in \mathbb{R}, \qquad i=1\ldots,\bar{n}, 
\end{align}
where $g_0(\mu, t_{\sigma}) \neq 0$. At the end, the problem of choosing the right $\big((f,g),\sigma\big)$ is a real algebraic geometry problem and can be decided by the quantifier elimination algorithm.

To see this, let $(\hat{X}(\mu),\hat{y}(\mu),\hat{S}(\mu))$ be associated to $((f,g),\sigma)$, and let, without loss of generality, $C_x,C_s \in \mathbb{Z}[\mu] [T,\Lambda]$ be the characteristic polynomials of $\hat{X}(\mu)$ and $\hat{S}(\mu)$, respectively. Then $((f,g),\sigma)$ represents the central path, when $\mu$ is sufficiently small, if the following two $\mathcal{Q}$-sentences with $\mathcal{Q} \subset \mathbb{Z}[\mu][T,\Lambda]$ are both true:
\begin{equation}\label{central_path_selection}
\begin{aligned}
(\exists T) (\forall \Lambda) \  \big(\sign(f^{(j)})=\sigma(f^{(j)}), \ j \in \mathbb{Z}_{\ge 0}\big) 
\wedge \big(\neg (C_x(T,\Lambda) = 0) \vee (\Lambda > 0)\big),\\
(\exists T) (\forall \Lambda) \ \big(\sign(f^{(j)})=\sigma(f^{(j)}), \ j \in \mathbb{Z}_{\ge 0}\big) 
\wedge \big(\neg (C_s(T,\Lambda) = 0) \vee (\Lambda > 0)\big).
\end{aligned}
\end{equation}
Now we can apply ~\cite[Algorithm~14.3]{BPR06} (General Decision) to~\eqref{central_path_selection}.

\vspace{5px}
\noindent
Algorithm~\ref{alg:sampling} summarizes our symbolic procedure for the real univariate representation of the central path.
\begin{algorithm}
\caption{Real univariate representation of the central path}
\label{alg:sampling}
\begin{algorithmic}
\State Input: The polynomial $\tilde{Q} \in \mathbb{Z} [\mu, V_1,\ldots,V_{\bar{n}+1}]$. \\
\Return The set $\mathcal{U}$ of parametrized univariate representations $u$ and Thom encodings $\sigma$ of roots of $f\in \mathbb{Z}[\mu][T]$; There exists a real univariate representation $(u,\sigma)$ with $u \in \mathcal{U}$ which describes the central path for sufficiently small positive $\mu$, see Lemma~\ref{how_much_small}.

\vspace{5px}
\State \textbf{Procedure:}
\begin{enumerate}
\item\label{param_sampling_Step} Generate the set $\mathcal{U}$ of parameterized univariate representations by applying~\cite[Algorithm~12.18]{BPR06} (Parameterized Bounded Algebraic Sampling) with input $\tilde{Q}$ and parameter $\mu$.
\item\label{Thom_encod_Step} Compute the ordered list of Thom encodings of the roots of $f$ in $\mathbb{R}\langle \mu \rangle$ by applying~\cite[Algorithm~10.14]{BPR06} (Thom Encoding) to each $f \in \mathbb{Z}[\mu][T]$ in the set $\mathcal{U}$.

\item\label{quantifier_elim} Decide which $((f,g),\sigma)$ describes the central path for sufficiently small positive $\mu$ by applying~\cite[Algorithm~14.3]{BPR06} (General Decision) to~\eqref{central_path_selection}.
\end{enumerate}
\end{algorithmic}
\end{algorithm}

\subsubsection{Complexity and the proof of correctness} The correctness of Algorithm~\ref{alg:sampling} follows from  Theorem~\ref{thm:general_decision}, and Lemma~\ref{how_much_small} below, and its complexity follows from the complexity of~\cite[Algorithm~12.18]{BPR06} and ~\cite[Algorithm~14.3]{BPR06}. First, we need the following quantitative result. 

\begin{lemma}\label{central_path_ball}
The polynomial $\tilde{Q}$ in~\eqref{central_path_defining_polynomial} has coefficients with bitsizes bounded by $\tau 2^{O(m+n^2)}$.
\end{lemma}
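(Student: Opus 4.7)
The plan is to (i) derive a quantitative upper bound on the norm of the central path over $\mu \in (0,1]$, (ii) use this bound to choose $\varepsilon$ as a rational number with controlled bitsize satisfying~\eqref{bound_on_algebraic_set}, and (iii) propagate the bitsize of $\varepsilon$ through the polynomial operations that define $\tilde{Q}$.

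For step (i), I would regard the restriction of the central path to $\mu \in (0,1]$ as a bounded semi-algebraic subset of $\mathbb{R}^{\bar n + 1}$ (with $\mu$ adjoined as an additional coordinate). Boundedness on $(0,1]$ is the qualitative statement of~\cite[Lemma~3.2]{Kl02}. The defining system~\eqref{central_path}, together with the inequalities $0 < \mu \le 1$, uses polynomials of degree at most $2$ with integer coefficients of bitsize at most $\tau$, in $\bar n + 1 = m + 2n^2 + 1$ variables. Applying a standard quantitative bound for the magnitude of points in a bounded semi-algebraic set defined over $\mathbb{Z}$ (for instance, the root/coordinate bounds in~\cite[Chapter~13]{BPR06}), the Euclidean norm of any point on this curve is bounded above by some $M$ satisfying $\log M \le \tau \cdot d^{O(\bar n + 1)}$ with $d = 2$, i.e., $M \le 2^{\tau \cdot 2^{O(m+n^2)}}$.

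For step (ii), I would choose a positive integer $N$ slightly larger than $M + 1$ and set $\varepsilon := 1/N$. Then~\eqref{bound_on_algebraic_set} holds, and the bitsize of $\varepsilon$ is dominated by $\log N = \tau \cdot 2^{O(m+n^2)}$.

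For step (iii), I would track the bit growth in $\tilde Q = Q^2 + (\varepsilon^2(V_1^2+\cdots+V_{\bar n + 1}^2) - 1)^2$. The coefficients of $Q$ arise as sums of at most $\mathrm{poly}(\bar n)$ products of the input entries, so they have bitsizes $O(\tau + \log(mn))$; squaring to form $Q^2$ adds only $O(\log \bar n)$ further bits to each coefficient. The second summand has coefficients which are integer linear combinations of $1$, $\varepsilon^2$, and $\varepsilon^4$, so their bitsizes are $O(\mathrm{bitsize}(\varepsilon))=\tau\cdot 2^{O(m+n^2)}$. Adding the two contributions gives the announced bound $\tau \cdot 2^{O(m+n^2)}$ on the bitsizes of the coefficients of $\tilde Q$.

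The main obstacle is step (i): producing an effective quantitative semi-algebraic norm bound with the correct explicit dependence on $\tau$, $d = 2$, and $\bar n + 1$. Once one invokes the appropriate theorem from~\cite[Chapter~13]{BPR06} with $d^{O(\bar n + 1)} = 2^{O(m+n^2)}$, steps (ii) and (iii) are routine bit accounting.
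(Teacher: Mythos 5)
Your overall strategy (bound the path, choose $\varepsilon$, propagate bitsizes) matches the paper's, and your steps (ii) and (iii) are the same routine accounting the paper leaves implicit. The difference, and the problem, is in step (i). You propose to bound the whole central path at once by viewing its graph over $(0,1]$ as a bounded semi-algebraic set cut out by ``polynomials of degree at most $2$ with integer coefficients of bitsize at most $\tau$.'' But the degree-$2$ part of \eqref{central_path} is only the equality system $\mathbf{A}\vectorize(X)=b$, $\mathbf{A}^Ty+\vectorize(S-C)=0$, $XS=\mu I$, and that variety is \emph{not} bounded over $\mu\in(0,1]$: it contains non-central branches (solutions with $X$ or $S$ indefinite) that can escape to infinity --- this is exactly why the paper must intersect $\zero(Q(\mu),\cdot)$ with a sphere of radius $1/\varepsilon$ when forming $\tilde Q$ in the first place. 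Boundedness of the path comes only from the constraints $X,S\succ 0$, and once you write those as polynomial inequalities (positivity of leading principal minors) the degree becomes $n$, so the quantitative theorem you invoke yields $2^{\tau\, n^{O(m+n^2)}}=2^{\tau\, 2^{O((m+n^2)\log n)}}$, which is weaker than the claimed $\tau\,2^{O(m+n^2)}$ by a $\log n$ factor in the exponent. The gap is repairable --- e.g.\ encode $X\succ 0$ existentially via $X=LL^{T}$ with $L$ lower triangular with positive diagonal, which keeps the degree constant and only adds $O(n^2)$ variables --- but as written the degree bookkeeping does not support the stated bound.

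For comparison, the paper avoids this entirely with an elementary interior-point argument: the primal and dual affine constraints give the orthogonality $\langle X(\mu)-X(1),\,S(\mu)-S(1)\rangle=0$, hence $\langle X(\mu),S(1)\rangle+\langle S(\mu),X(1)\rangle=n(\mu+1)\le 2n$ for $\mu\in(0,1]$, and positive definiteness plus $X(1)S(1)=I$ then give $\|X(\mu)\|\le 2n\lambda_{\max}(X(1))$ and similarly for $S(\mu)$. This reduces the problem to bounding the single \emph{isolated} point $(X(1),y(1),S(1))$ of the degree-$2$ equality system at $\mu=1$, to which \cite[Theorem~1]{BR10} applies directly and yields $2^{\tau 2^{O(m+n^2)}}$. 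In short, the paper trades your general-purpose quantitative semi-algebraic bound on the whole curve for an SDO-specific duality argument that needs the quantitative input only at one nondegenerate point; that is what lets it keep the degree at $2$ and obtain the sharper exponent.
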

\begin{proof}
All we need here is the magnitude of $1/\varepsilon$ in $\tilde{Q}$, which is also an upper bound on the norm of central solutions for all $\mu \in (0,1]$, see~\eqref{bound_on_algebraic_set}. From the central path equations in~\eqref{central_path} it follows that $\langle X(\mu)-X(1), S(\mu)-S(1) \rangle = 0$, which results in
\begin{align*}
\langle X(\mu),S(1) \rangle + \langle S(\mu),X(1) \rangle = n(\mu+1).
\end{align*}
Since the central solutions are positive definite, 
\begin{align*}
\langle X(\mu),S(1) \rangle >0, \ \langle S(\mu),X(1) \rangle >0,
\end{align*}
and thus for all $\mu \in (0,1]$ we have
\begin{align*}
\langle X(\mu),S(1) \rangle \le  2n \quad \text{and} \quad  \langle S(\mu),X(1) \rangle \le  2n.
\end{align*}
Furthermore, the centrality condition $XS = I$ implies that
\begin{equation}
\begin{aligned}\label{bound_on_Xu}
\|X(\mu)\| &\le \frac{2 n}{\lambda_{\min}(S(1))} \le 2n\lambda_{\max}(X(1)),\\
\|S(\mu)\| &\le \frac{2 n}{\lambda_{\min}(X(1))} \le 2n\lambda_{\max}(S(1)), 
\end{aligned}
\qquad\qquad \forall \mu \in (0,1].
\end{equation}
By the integrality of the data, see Assumption~\ref{integral_data}, there exists~\cite[Theorem~1]{BR10} a ball of radius $r=2^{\tau 2^{O(m+n^2)}}$ containing every isolated point of $\zero(\mathcal{P}',\mathbb{R}^{\bar{n}})$, including $(X(1),y(1),S(1))$, where
\begin{align*}
\mathcal{P}':=\big\{\mathbf{A} \vectorize(X)=b, \mathbf{A}^T y + \vectorize(S-C)=0, \vectorize(XS-I_n)=0\big\}.
\end{align*}
This also gives an upper bound on $\lambda_{\max}(X(1))$ and $\lambda_{\max}(S(1))$ which, by~\eqref{bound_on_Xu}, yields
\begin{align*}
\|X(\mu)\|,\|S(\mu)\| = 2^{\tau2^{O(m+n^2)}}, \qquad \forall \mu \in (0,1]. 
\end{align*}
Now the result follows when we choose $1/\varepsilon = 2^{\tau2^{O(m+n^2)}}$.  
\end{proof}
\begin{lemma}\label{how_much_small}
The polynomials $(f,g) \in \mathcal{U}$ have degree $O(1)^{\bar{n}+1}$ and their coefficients have bitsizes $\tau 2^{O(m+n^2)}$, where $\tau$ is an upper bound on the bitsizes of the entries in $A^i$, $C$, and $b$. Furthermore, there exist a Thom encoding $\sigma$, $u \in \mathcal{U}$, and $\gamma = 2^{\tau 2^{O(m+n^2)}}$ such that $(u,\sigma)$ describes the central path for all $\mu \in (0, 1/\gamma)$. 
\end{lemma}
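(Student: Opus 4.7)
\textbf{Proof plan for Lemma~\ref{how_much_small}.}

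The degree and bitsize estimates are a direct application of the complexity analysis of the parameterized bounded algebraic sampling algorithm (\cite[Algorithm~12.18]{BPR06}) to the input $\tilde Q$. Observe that $Q$ in~\eqref{central_path_defining_polynomial} is a sum of squares of polynomials of total degree $\leq 2$ in $(V_1,\ldots,V_{\bar n+1})$, so $\tilde Q$ has constant total degree (in fact $\leq 8$) in $\bar n+2$ variables $(\mu,V_1,\ldots,V_{\bar n+1})$. By Lemma~\ref{central_path_ball}, its integer coefficients have bitsizes $\tau 2^{O(m+n^2)}$. The complexity statement of~\cite[Algorithm~12.18]{BPR06} then produces a set $\mathcal{U}$ of parameterized univariate representations $(f,g_0,\ldots,g_{\bar n+1})\in\mathbb{Z}[\mu,T]^{\bar n+3}$ whose degrees in $\mu$ and $T$ are bounded by $O(1)^{\bar n+1}$ and whose coefficients have bitsizes bounded by the input bitsize times $O(1)^{\bar n+1}$, which is $\tau 2^{O(m+n^2)}$ since $\bar n = m+2n^2$.

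For the existence of a pair $(u,\sigma)$ describing the central path near $\mu=0$, I would use the key property of parameterized bounded algebraic sampling: for every fixed $\mu\in\mathbb{R}$, evaluating the representations in $\mathcal{U}$ yields a finite set of points meeting every semi-algebraically connected component of $\zero(\tilde Q(\mu),\mathbb{R}^{\bar n+1})$. Since $(X(\mu),y(\mu),S(\mu))$ is the unique central solution and, by nonsingularity of the central-path Jacobian, an isolated real zero of the corresponding system (when intersected with $X,S\succ 0$), it constitutes its own connected component and is therefore captured by at least one parameterized representation $u\in\mathcal{U}$ together with a Thom encoding $\sigma$ of a real root of $f(\mu,T)$. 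The quantifier-elimination criterion in~\eqref{central_path_selection} shows that the set
\[
E_{(u,\sigma)} \defeq \bigl\{\mu\in(0,1] : (u,\sigma)\text{ describes }v(\mu)\bigr\}
\]
is semi-algebraic for each $(u,\sigma)$. Since $\mathcal{U}$ is finite and Thom encodings on $\Der_T(f)$ are finite, the family $\{E_{(u,\sigma)}\}$ is a finite cover of $(0,1]$, each member being a finite union of intervals and points. By a pigeonhole argument at the accumulation point $0$, some $E_{(u_0,\sigma_0)}$ must contain a terminal interval $(0,\gamma_0)$.

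To bound $\gamma_0$ from below, I would locate the right endpoint of this interval as a root of an explicit polynomial obtained by the quantifier elimination applied to the sentences~\eqref{central_path_selection}. Concretely, the boundary of $E_{(u_0,\sigma_0)}$ in $(0,1]$ is defined by polynomial conditions in $\mu$ derived from the coefficients of $f$, $g$, $\Der_T(f)$, and the characteristic polynomials $C_x,C_s$ of the matrices $\hat X(\mu),\hat S(\mu)$ read from $u_0$; by Theorem~\ref{14:the:tqe} and the already-established bounds, these polynomials in $\mathbb{Z}[\mu]$ have degree $O(1)^{\bar n+1}=2^{O(m+n^2)}$ and bitsizes $\tau 2^{O(m+n^2)}$. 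The classical Cauchy lower bound on positive real roots of an integer univariate polynomial of degree $D$ and bitsize $\tau'$ gives a lower bound of $2^{-\tau'}$; substituting our estimates yields a lower bound $2^{-\tau 2^{O(m+n^2)}}$ on the smallest positive boundary value of $E_{(u_0,\sigma_0)}$. Taking $\gamma\defeq 2^{\tau 2^{O(m+n^2)}}$ therefore guarantees $(u_0,\sigma_0)$ represents the central path on all of $(0,1/\gamma)$.

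The step I expect to be most delicate is the pigeonhole in the second paragraph: I must ensure that the sample point selected in the connected component containing $v(\mu)$ is indeed $v(\mu)$ itself (not some nearby point in the same component). This is why the input polynomial is $\tilde Q=Q^2+(\varepsilon^2\sum V_i^2-1)^2$ rather than just $Q$: the additional sphere constraint, together with the isolation of the central solution (as a zero-dimensional component of $\{\mathcal{P}_\mu=0\}$ with $X,S\succ 0$), reduces the connected component containing $v(\mu)$ to a single point, so that the sampling algorithm returns $v(\mu)$ exactly. Once this point is understood, the semi-algebraic finiteness and the root-bound computation are standard.
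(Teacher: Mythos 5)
Your proposal is correct and follows essentially the same route as the paper's proof: the degree and bitsize bounds come from the complexity statement of the parameterized bounded algebraic sampling applied to $\tilde Q$ together with Lemma~\ref{central_path_ball}; existence of a single $(u,\sigma)$ valid on a terminal interval follows from the isolation of the central solution (so the sampling captures $v(\mu)$ exactly) plus finiteness of $\mathcal{U}$ and of the Thom encodings; and the quantitative lower bound on that interval is obtained by eliminating quantifiers from~\eqref{central_path_selection} and applying a lower bound on the nonzero real roots of the resulting integer polynomials in $\mu$ (the paper cites \cite[Lemma~10.3]{BPR06}, which is the same Cauchy-type bound you invoke). Your explicit discussion of why the sampled point in the relevant connected component is $v(\mu)$ itself is a point the paper passes over more quickly, but it does not change the argument.
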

\begin{proof}
The output of~\cite[Algorithm~12.18]{BPR06} is a set of $(\bar{n}+3)$-tuples of polynomials $(f, g)$ in $\mathbb{Z}[\mu,T]_{\le O(\deg_V(\tilde{Q}))^{\bar{n}+1}}$, where $\deg_V(\tilde{Q}) = 4$ is the degree of $\tilde{Q}$ with respect to $V$. The bound on the bitsizes of the coefficients follows from Lemma~\ref{central_path_ball} and~\cite[Algorithm~12.18]{BPR06}.

\vspace{5px}
\noindent
Since a central solution is an isolated solution%
\footnote{This follows from the implicit function theorem.} of $\zero\big(Q(\mu),\mathbb{R}^{\bar{n}}\big)$, the projections of the real points associated to $u$ to the first $\bar{n}$ coordinates contain the central path when $\mu \in (0,1]$. Since there are finitely many $(\bar{n}+3)$-tuples of polynomials in $\mathcal{U}$, there must exist $(f,g) \in \mathcal{U}$ and $\mu_0 > 0$ such that $(f,g)$ describes the central solutions for all $\mu \in (0,\mu_0)$.

\vspace{5px}
\noindent
Let $((f,g),\sigma)$ be a real univariate representation for which ~\eqref{central_path_selection} is true (i.e., it describes the central path for sufficiently small $\mu$), where $(f,g) \in \mathbb{Z}[\mu,T]^{\bar{n}+2}$ (after discarding $g_{\bar{n}+1}$), and consider the following formulas:
\begin{align*}
\Phi_x (\mu) = (\exists T) (\forall \Lambda) \ \big(\sign(f^{(j)})=\sigma(f^{(j)}), \ j \in \mathbb{Z}_{\ge 0}\big) 
\wedge \big(\neg (C_x(T,\Lambda) = 0) \vee (\Lambda > 0)\big),\\
\Phi_s (\mu) = (\exists T) (\forall \Lambda) \  \big(\sign(f^{(j)})=\sigma(f^{(j)}), \ j \in \mathbb{Z}_{\ge 0}\big) 
\wedge \big(\neg (C_s(T,\Lambda) = 0) \vee (\Lambda > 0)\big).
\end{align*}
\noindent
Notice that $\RR(\Phi_x,\mathbb{R}) \cap \RR(\Phi_s,\mathbb{R}) \neq \emptyset$. By Theorem~\ref{14:the:tqe}, $\Phi_x$ and $\Phi_s$ are equivalent to quantifier-free $\mathcal{P}_x$- and $\mathcal{P}_s$-formulas, where $\mathcal{P}_x,\mathcal{P}_s \subset \mathbb{Z}[\mu]_{\le 2^{O(m+n^2)}}$. By~\cite[Lemma~10.3]{BPR06} and Lemma~\ref{central_path_ball}, there exists $\gamma = 2^{\tau' +O(m+n^2)}$ with $\tau'=\tau 2^{O(m+n^2)}$ such that all nonzero real roots of polynomials in $\mathcal{P}_x,\mathcal{P}_s$ are bigger than $1/\gamma$  . This completes the proof. 
\end{proof}

\hide{
The cylindrical decomposition of $\mathbb{R}^2$ adapted to $\mathcal{R}:=\{f,f^{(1)},\ldots,f^{\deg(f)}\}$ yields a cylindrical decomposition of $\mathbb{R}^2$ on which the cells are $\mathcal{R}$-invariant. Thus, a lower bound on the horizontal length of a cell on which $f, f^{(1)},\ldots,f^{\deg(f)}$ have constant signs leads to the desired bound on $\mu_0$. By~\cite[Proposition~8.45]{BPR06}, the elimination of $T$~\cite[Algorithm~11.1]{BPR06} gives polynomials in $\mathbb{Z}[\mu]$ of degree $O(1)^{\bar{n}+1}$, whose distinct roots, by~\cite[Corollary~10.22]{BPR06}, have a minimum distance of $1/\gamma$ with $\gamma = 2^{\tau' O(1)^{\bar{n}+1}}$, where $\tau'=O(\tau 2^{\bar{n}})$ by Lemma~\ref{central_path_ball}.
}

Now, we can prove the complexity of Algorithm~\ref{alg:sampling}.

\begin{theorem}\label{uni_var_degree_of_polynomials}
There exist $\gamma = 2^{\tau 2^{O(m+n^2)}}$ and an algorithm with complexity $2^{O(m+n^2)}$ to compute $((f,g),\sigma)$ which represents the central path for all $\mu \in (0,  1/\gamma)$.
\end{theorem}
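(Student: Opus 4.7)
The plan is to assemble the complexity bound by walking through Algorithm~\ref{alg:sampling} step by step, invoking the published complexity of each subroutine, and then combining this with the existence guarantee already furnished by Lemma~\ref{how_much_small}. The crucial structural observation driving the bound is that $\tilde{Q}$ has $\bar{n}+1 = m+2n^2+1$ variables, total degree a fixed constant independent of $m$ and $n$, and (by Lemma~\ref{central_path_ball}) coefficient bitsizes bounded by $\tau 2^{O(m+n^2)}$. Consequently every subroutine whose arithmetic cost is singly exponential in the number of variables will contribute at most $2^{O(m+n^2)}$ operations, and these will all aggregate to the claimed bound.

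For Step~\ref{param_sampling_Step} I would invoke the complexity of~\cite[Algorithm~12.18]{BPR06}: applied to $\tilde{Q}$ with parameter $\mu$ it uses $\deg_V(\tilde{Q})^{O(\bar{n})} = 2^{O(m+n^2)}$ arithmetic operations in $\mathbb{Z}$ and returns at most $2^{O(m+n^2)}$ parameterized univariate representations $(f,g) \in \mathbb{Z}[\mu,T]^{\bar{n}+3}$ with degree bound $\deg_T(f),\deg_T(g_i) = O(1)^{\bar{n}+1}$, as already recorded in Lemma~\ref{how_much_small}. For Step~\ref{Thom_encod_Step}, the Thom encoding algorithm~\cite[Algorithm~10.14]{BPR06} applied to each such $f$ runs in a number of arithmetic operations polynomial in $\deg_T(f)$, and summing over all $u \in \mathcal{U}$ still fits within $2^{O(m+n^2)}$ operations. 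For Step~\ref{quantifier_elim}, the two sentences in~\eqref{central_path_selection} involve polynomials in $\mathbb{Z}[\mu][T,\Lambda]$ of degree $2^{O(m+n^2)}$, with a constant number of quantifier blocks each of constant size, so Theorem~\ref{thm:general_decision} yields overall complexity $2^{O(m+n^2)}$ for this step as well.

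To conclude, Lemma~\ref{how_much_small} already guarantees the existence of a pair $((f,g),\sigma)$ with $(f,g) \in \mathcal{U}$ for which~\eqref{central_path_selection} is true and which describes the central path on the interval $(0,1/\gamma)$ with $\gamma = 2^{\tau 2^{O(m+n^2)}}$; Algorithm~\ref{alg:sampling} simply returns such a pair once the decision test in Step~\ref{quantifier_elim} succeeds on it. The main subtlety in the proof is not the complexity of any individual subroutine, which is standard, but rather certifying that the quantifier-free $\mathcal{P}_x$- and $\mathcal{P}_s$-formulas produced by Step~\ref{quantifier_elim} isolate the central-path branch on a genuine interval $(0,1/\gamma)$. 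This is handled by Lemma~\ref{how_much_small} via the root-separation estimate~\cite[Lemma~10.3]{BPR06} applied to the univariate polynomials in $\mathbb{Z}[\mu]$ produced by quantifier elimination, whose coefficient bitsizes and degrees are in turn controlled by Lemma~\ref{central_path_ball} and by the degree estimates of Theorem~\ref{14:the:tqe}; this is precisely what forces the doubly-exponential-looking shape of $\gamma$.
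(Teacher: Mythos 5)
Your proposal is correct and follows essentially the same route as the paper: bounding Steps~\ref{param_sampling_Step}--\ref{quantifier_elim} of Algorithm~\ref{alg:sampling} via the published complexities of~\cite[Algorithm~12.18]{BPR06},~\cite[Algorithm~10.14]{BPR06}, and Theorem~\ref{thm:general_decision} (noting the constant number of quantified variables in~\eqref{central_path_selection}), and then deriving the existence of $((f,g),\sigma)$ and the bound on $\gamma$ directly from Lemma~\ref{how_much_small}. The only cosmetic difference is that the paper additionally records $\tdeg_{\mu}(\tilde{Q})=6$ when invoking the parameterized sampling algorithm, a detail your argument implicitly subsumes in the constant-degree observation.
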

\begin{proof}
The complexity of Step~\ref{param_sampling_Step} in Algorithm~\ref{alg:sampling} is $2^{O(m+n^2)}$, which is determined by~\cite[Algorithm~12.18]{BPR06} and noting that $\tdeg_{\mu}(\tilde{Q})=6$, where $\tdeg_{\mu}(\tilde{Q})$ is the total degree of monomials in $\tilde{Q}$ containing $\mu$.  The complexity of Step~\ref{Thom_encod_Step} is determined by the number of parametrized univariate representations $(f,g)$ in $\mathcal{U}$, which is $O(\deg(\tilde{Q}))^{\bar{n}+1}$ and the complexity of~\cite[Algorithm~10.14]{BPR06} applied to every $(f,g) \in \mathcal{U}$. Step~\ref{quantifier_elim} decides the truth or falsity of~\eqref{central_path_selection} for every real univariate representation $((f,g),\sigma)$. By Theorem~\ref{thm:general_decision}, all this can be done using $2^{O(m+n^2)}$ arithmetic operations. The second part of the theorem is the direct application of Lemma~\ref{how_much_small}.  
\end{proof}
\subsection{Puiseux expansion of the central path}\label{Puiseux_Expansion}
Existence of the reparametrization in Problem~\ref{CP_Analytic_Extension} can be proved by invoking the output of Algorithm~\ref{alg:sampling} and Proposition~\ref{Puiseux_Newton_Thm}. By substituting the roots $t \in \mathbb{C} \langle \langle \mu \rangle \rangle$ of $f(\mu,T) = 0$, it is easy to see that the $i^{\mathrm{th}}$ coordinate of the central path can be represented by $\sum_{j = 0}^{\infty} c_{ij} \mu^{j/q_{i}}$ with $c_{ij} \in \mathbb{R}$, and $q_{i} \in \mathbb{Z}$, see e.g.,~\cite[Theorem~1.2 of Chapter IV]{W78}%
\footnote{In case of the central path, $j$ must be nonnegative, since otherwise the root would be unbounded.}. Then one can choose $\rho$ to be (an integer multiple of) the least common multiple of $q_{i}$ over $i \in \{1,\ldots,\bar{n}\}$. However, this approach does not lead to an algorithm for an effective computation of $\rho$. 

An alternative approach, which we will follow in Section~\ref{Newton_Polygon_Alg}, can be given in terms of a semi-algebraic description of the coordinates. More precisely, let $((f,g),\sigma)$ with $(f,g)\in \mathcal{U}$ being the real univariate representation of the central path for all sufficiently small $\mu$, see e.g., Theorem~\ref{uni_var_degree_of_polynomials}. Then the graph of $v_i(\mu)$, when $\mu$ is sufficiently small, can be described by the following quantified formula
\begin{align}\label{central_solution_formula}
(\exists T)  \ \big(V_i g_0-g_i = 0\big)  \wedge  \big(\sign(f^{(j)}(\mu,T))=\sigma(f^{(j)}),\ j=0,1,\ldots \big).
\end{align}
\noindent
By Theorem~\ref{14:the:tqe},~\eqref{central_solution_formula} is equivalent to a quantifier-free $\mathcal{P}_i$-formula with $\mathcal{P}_i \subset \mathbb{Z}[\mu,V_i]$. Furthermore, there exists a polynomial $P_i \in \mathcal{P}_i $ such that $P_i(\mu,v_i(\mu))=0$ for sufficiently small $\mu$ and $P_i(0,v_i^{**})=0$ (because~
\eqref{central_solution_formula} describes the graph of a semi-algebraic function). Note that $P_i \in \mathbb{Z}[\mu,V_i]$ in Theorem~\ref{14:the:tqe}, as the output of the quantifier elimination, is the product a finite number of polynomials in $\mathbb{Z}[\mu,V_i]$, see e.g., the proof of Lemma~2.5.2 in~\cite[Page 36] {BCR98}. Therefore, $P_i$ need not be irreducible over $\mathbb{C}$ (or even $\mathbb{R}$). Nevertheless, $P_i$ has an absolutely irreducible factor with real coefficients, whose zero set contains the graph of the $i^{\mathrm{th}}$ coordinate of the central path when $\mu$ is sufficiently small.
\begin{proposition}\label{irreducible_polynomial}
The polynomial $P_i$ has an absolutely irreducible factor $R_i \in \mathbb{R}[\mu,V_i]$ such that $R_i(\mu,v_i(\mu))=0$.
\end{proposition}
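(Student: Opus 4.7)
The plan is to factor $P_i$ over $\mathbb{C}$ into absolutely irreducible factors, identify the one that vanishes on the entire germ of the graph of $v_i$, and then use the reality of $v_i$ to exhibit this factor with coefficients in $\mathbb{R}$.

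First, write $P_i = c\prod_{k=1}^{r} Q_k^{e_k}$ with $Q_k \in \mathbb{C}[\mu,V_i]$ distinct and absolutely irreducible. Since $P_i(\mu,v_i(\mu))=0$ on the interval $I_0=(0,1/\gamma)$ furnished by Theorem~\ref{uni_var_degree_of_polynomials}, for each $\mu\in I_0$ the point $(\mu,v_i(\mu))$ lies in some $\{Q_k=0\}$. The sets
\begin{align*}
C_k:=\{\mu\in I_0 \mid Q_k(\mu,v_i(\mu))=0\}
\end{align*}
are closed (by continuity of $v_i$) and their finite union equals $I_0$, so by the Baire category theorem some $C_k$ contains a nonempty open subinterval $J$. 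Because $v_i$ is analytic on $(0,\infty)$, the map $\mu\mapsto Q_k(\mu,v_i(\mu))$ is analytic on $I_0$ and vanishes on $J$, hence vanishes identically on $I_0$ by the identity theorem.

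To replace $Q_k$ by a polynomial with real coefficients, observe that since $v_i$ is real-valued, conjugation gives $\overline{Q}_k(\mu,v_i(\mu))=\overline{Q_k(\mu,v_i(\mu))}=0$, so $\overline{Q}_k$ also vanishes on the germ of the graph. If $Q_k$ and $\overline{Q}_k$ were non-associate absolutely irreducible polynomials, then the graph of $v_i$ would lie in $\{Q_k=0\}\cap\{\overline{Q}_k=0\}$, which is a finite set (two distinct irreducible plane curves in $\mathbb{C}^2$ meet in finitely many points by Bezout), contradicting the infinitude of the graph. Hence $\overline{Q}_k=\lambda Q_k$ for some $\lambda\in\mathbb{C}^*$; iterating conjugation yields $|\lambda|=1$, so writing $\lambda=e^{i\alpha}$ with $\alpha\in\mathbb{R}$, the polynomial $R_i:=e^{i\alpha/2}Q_k$ satisfies $\overline{R}_i=R_i$, i.e., $R_i\in\mathbb{R}[\mu,V_i]$. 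By construction $R_i$ is absolutely irreducible (a nonzero scalar multiple of $Q_k$), divides $P_i$ in $\mathbb{C}[\mu,V_i]$ (hence in $\mathbb{R}[\mu,V_i]$, as the quotient necessarily has real coefficients), and satisfies $R_i(\mu,v_i(\mu))=0$.

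The main obstacle is the first step: ruling out the possibility that different portions of the graph of $v_i$ are governed by different irreducible factors of $P_i$. The combination of the Baire category theorem with the analyticity of the central path handles this cleanly; the subsequent passage from $\mathbb{C}$ to $\mathbb{R}$ is then an exercise in polynomial factorization under complex conjugation.
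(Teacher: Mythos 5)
Your proof is correct, and it reaches the conclusion by a slightly different mechanism than the paper. The paper simply takes for granted that $P_i$ has an absolutely irreducible factor $R_i\in\mathbb{C}[\mu,V_i]$ vanishing on the germ of the graph, and then writes $R_i=\real(R_i)+\sqrt{-1}\,\imag(R_i)$: both parts vanish on the (infinite, real) graph, hence share a common factor, which is incompatible with the absolute irreducibility of $R_i$ unless $R_i$ is already (proportional to) a real polynomial. You instead work with the conjugate polynomial $\overline{Q}_k$ as a whole: non-associate irreducible curves meet in finitely many points, so $\overline{Q}_k=\lambda Q_k$ with $|\lambda|=1$, and a square root of $\lambda$ normalizes $Q_k$ to a real polynomial. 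The two arguments are close cousins (since $\real(Q_k)$ and $\imag(Q_k)$ are linear combinations of $Q_k$ and $\overline{Q}_k$), but yours is the cleaner of the two: the paper's phrasing as a contradiction is somewhat loose, whereas your associate-and-rescale step, together with the observation that the cofactor $P_i/R_i$ is automatically real, gives exactly the statement claimed. You also supply a justification the paper omits, namely why a single irreducible factor must vanish on the \emph{entire} germ of the graph rather than on scattered pieces; your Baire-category-plus-identity-theorem argument handles this (one could equally note that the analytic functions $\mu\mapsto Q_k(\mu,v_i(\mu))$ have identically vanishing product on an interval, so one of them vanishes identically). No gaps.
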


\begin{proof}
Let $R_i \in \mathbb{C}[\mu,V_i]$ be an absolutely irreducible factor of $P_i$ whose zero set contains the graph of the $i^{\mathrm{th}}$ coordinate of the central path. Then $R_i$ can be written as
\begin{align*}
R_i = \real(R_i) + \sqrt{-1}\imag(R_i),
\end{align*}
where  $\real(R_i),\imag(R_i) \in \mathbb{R}[\mu,V_i]$ are the real and imaginary parts of $R_i$ obtained from the real and imaginary parts of their coefficients. Then $R_i(\mu,v_i(\mu)) = 0$ yields 
\begin{align*}
\real(R_i)(\mu,v_i(\mu))=\imag(R_i)(\mu,v_i(\mu)) = 0,
\end{align*}
which in turn implies that $\real(R_i)$ and $\imag(R_i)$ have a common factor, see e.g.,~\cite[Lemma of Section 1.1]{S13}. However, this would contradict the absolute irreducibility of $R_i$. 
\end{proof}
\begin{remark}
For complexity purposes, we do not include any factorization step in Algorithm~\ref{analytic_param}. Instead, we choose $P_i$ to contain only the tail end of the central path, i.e., for sufficiently small positive $\mu$. 
\end{remark}
\noindent
Let $P_i:=\sum_{j=0}^{d_i} p_{ij}(\mu) V_i^{j}$ from Theorem~\ref{14:the:tqe} applied to~\eqref{central_solution_formula}, where $p_{ij}(\mu) \in \mathbb{Z}[\mu]$ and $d_i:=\deg_{V_i}(P_i)$. By Proposition~\ref{Puiseux_Newton_Thm}, $P_i$ can be factorized as
\begin{align}\label{central_path_factorization}
P_i(\mu,V_i) = p_{i d_i}(\mu) \prod_{\ell=1}^{d_i} \big(V_i - \psi_{i \ell} (\mu)\big), \qquad i=1,\ldots,\bar{n},
\end{align}
where the ramification index of $\psi_{i \ell}$ is denoted by $q_{i\ell}$. Therefore, there exists a unique (multiple) factor $\ell_i$ in~\eqref{central_path_factorization} such that 
\begin{align}\label{central_path_Puiseux_exp}
v_i(\mu)=\psi_{i \ell_i} (\mu):=\sum_{j = 0}^{\infty} c_{ij} \mu^{j/q_{i\ell_i}}, \ \ \text{for all sufficiently small} \ \mu > 0,
\end{align}
where $c_{ij} \in \mathbb{R}$ and $c_{i0}=v_i^{**}$. 
\begin{remark}
Although $\zero(\tilde{Q}(\mu),\mathbb{R}^{\bar{n}+1})$ is bounded over all $\mu$, not every root of $P_i = 0$ near $\mu = 0$ is necessarily bounded, unless $p_{id_i}(0) \neq 0$. 
\end{remark}
\begin{remark}
Notice that $\psi_{i \ell_i} \in \mathbb{R} \langle \mu \rangle$ because $v_i(\mu)$ is semi-algebraic.
\end{remark}
Note that $P_i = 0$ may have more than one branch over $\mu = 0$ with the same center $(0,v_i^{**})$, because $P_i$ is not necessarily irreducible over $\mathbb{C}\{\mu\}$%
\footnote{Remember that the zero set of an irreducible factor of $P_i$ over $\mathbb{C}\{\mu\}$ is a branch of $P_i = 0$.}, see Example~\ref{joint_center}. However, one of these branches contains the graph of the $i^{\mathrm{th}}$ coordinate of the central path, when $\mu$ is sufficiently small, see Proposition~\ref{Puiseux_Newton_Thm}.
\begin{remark}
By Proposition~\ref{Puiseux_Newton_Thm}, if $\psi_{i \ell_i}$ is not a multiple root of $P_i = 0$, then \textit{exactly} one of the branches of $P_i = 0$ contains the graph of the $i^{\mathrm{th}}$ coordinate of the central path, when $\mu$ is sufficiently small.
\end{remark}

This branch, in analogy with Example~\ref{derivatives_diverge}, is described by a set of $q_{i\ell_i}$ distinct Puiseux expansions, including~\eqref{central_path_Puiseux_exp},
\begin{align*}
 \psi_{i k} (\mu)= \sum_{j = 0}^{\infty} c_{ij} \Big(\exp\!\big(2\pi \sqrt{-1} (k-1)/q_{i\ell_i}\big)\mu^{1/q_{i\ell_i}}\Big)^j, \qquad k=1,\ldots,q_{i\ell_i}, 
\end{align*} 
which they all converge to $(0,v_i^{**})$. Thus, letting $q_i$ be the ramification index of $\psi_{i \ell_i} (\mu)$, $q$ the least common multiple of all $q_{i \ell_i}$ over $i \in \{1,\ldots,\bar{n}\}$, and $\rho$ be a positive integer multiple of $q$, then we get the series
\begin{align*}
\psi_{i \ell_i} (\mu^{\rho}) \in \mathbb{C}\{\mu\}, \quad i=1,\ldots,\bar{n},
\end{align*}
which are all convergent in a neighborhood of $\mu = 0$. This indicates that $v(\mu^{\rho})$ is analytic at $\mu = 0$, providing an affirmative answer to Problem~\ref{CP_Analytic_Extension}.

\hide{
\begin{definition}\label{ramification_index_centra_path}
We call $q$, the least common multiple of $q_{i\ell_i}$s from~\eqref{central_path_Puiseux_exp}, the ramification index of the central path.
\end{definition}}

\begin{remark}
We should note that $q$ is well-defined and independent of the $P_i$ and the semi-algebraic description~\eqref{central_solution_formula}.
\end{remark}

\begin{remark}
The authors in~\cite[Page~4]{HLMZ21} indicate the analyticity of $v(\mu^{\rho})$ near $\mu = 0$ in terms of the cycle number of the central path, see also~\cite[Remark~2]{HLMZ21}. However, in contrast to~\cite{HLMZ21}, our algorithmic derivation of $\rho$ is explicit in terms of the degrees and Puiseux expansions of the defining polynomials $P_i$. 
\end{remark}

Now, we can provide the proof of Theorem~\ref{thm_reparametrization}.

\begin{proof}[Proof of Theorem~\ref{thm_reparametrization}]
By Theorem~\ref{14:the:tqe}, the quantifier elimination applied to~\eqref{central_solution_formula} returns quantifier-free formulas involving polynomials $P_i \in \mathbb{Z}[\mu,V_i]_{\le 2^{O(m+n^2)}}$. With no loss of generality, see Section~\ref{Newton_Polygon_Alg}, we can assume that 
\begin{align*}
P_i(0,V_i) \neq 0, \quad P_i(0,0)=0, \quad i=1,\ldots,\bar{n}.
\end{align*}
By Proposition~\ref{WPT}, $P_i$ is the product of a Weierstrass polynomial $W_i \in \mathbb{C} \{ \mu \}[V_i]$ of degree $\deg_{V_i}(P_i)$ and a unit $U_i \in \mathbb{C} \{ \mu \}[V_i]$. We may also assume that $W_i$ is irreducible over $\mathbb{C}\{\mu\}$ (by only considering the unique component of $W_i$ whose zero set contains the graph of the $i^{\mathrm{th}}$ coordinate of the central path, see Remark~\ref{branch_alt_def}). Now, the application of Proposition~\ref{geometric_Puiseux} to $W_i$ implies that 
\begin{align*}
q_{i \ell_i} \le \deg_{V_i}(P_i), \quad i=1,\ldots,\bar{n}. 
\end{align*}
Finally, we get the result by noting that $\deg_{V_i}(P_i)=2^{O(m+n^2)}$ and $q \le \prod_{i=1}^{\bar{n}} q_{i}$.
\hide{
For the purpose of our symbolic algorithm in Section~\ref{Newton_Polygon_Alg}, we provide a proof on the basis of the Newton-Puiseux algorithm in~\cite[Section~3.2 of Chapter IV]{W78}%
\footnote{The bound can be also derived using Propositions~\ref{geometric_Puiseux} and~\ref{WPT}.}. By Theorem~\ref{14:the:tqe}, the quantifier elimination applied to~\eqref{central_solution_formula} returns quantifier-free formulas involving polynomials $P_i \in \mathbb{Z}[\mu,V_i]_{\le 2^{O(m+n^2)}}$. By the the proof of the Newton-Puiseux theorem, the roots of $P_i(\mu,V_i) = 0$ near $\mu = 0$ can be constructed as 
\begin{align*}
\varphi_{is}(\mu)=a_{i1} \mu^{\gamma_{i1}} + a_{i2} \mu^{\gamma_{i1} + \gamma_{i2}}+a_{i3} \mu^{\gamma_{i1} + \gamma_{i2}+ \gamma_{i3}}+\cdots,
\end{align*}
where $\varphi_{is}$ corresponds to the segment $s$ of the Newton polygon of $P_i$, $\gamma_{i1} \in \mathbb{Q}$ is the negative of the slope of a segment of the Newton polygon of $P_i$, and $a_{i1} \in \mathbb{C}$ is a (multiple) root of a polynomial in $\mathbb{Z}[T]$ by which the terms of the lowest order of $\mu$ in the polynomial 
\begin{align}\label{Newton_Puiseux_Steps_1}
P_i(\mu,\mu^{\gamma_{i1}}(V_i+T))
\end{align}
vanish (see~\cite[(3.4) on Page~98]{W78}). Notice that every segment $s$ of the Newton polygon of $P_i$ determines the order $\gamma_{i1}$ of a set of Puiseux series, as root(s) of $P_i = 0$ near $\mu = 0$. This procedure continues by forming the Newton polygon for
\begin{align}\label{Newton_Puiseux_Steps_j}
P_i^{(j+1)}:=P_i^{(j)}(\mu,\mu^{\gamma_{ij}}(V_i+a_{ij})), \quad j=1,2,\ldots
\end{align}
where $P_i^{(1)}:=P_i$, choosing a segment $\gamma_{i(j+1)} > 0$, and finding the root of a polynomial by which the terms of the lowest order of $\mu$in the polynomial $P^{j
+1}_i(\mu,\mu^{\gamma_{i(j+1)}}(V_i+T))$ vanish.
In particular, for the Puiseux series~\eqref{central_path_Puiseux_exp} we have $\gamma_{i1} \ge 0$ (because it converges) and 
\begin{align*}
c_{i0} &=\begin{cases} a_{i1} \ \ &\text{if} \  \gamma_{i1} = 0\\
0 \ \ &\text{if} \ \gamma_{i1} > 0  \end{cases}\\
c_{ij} &=\begin{cases} a_{i(j+1)} \ \ &\text{if} \  \gamma_{i1} = 0\\
a_{ij} \ \ &\text{if} \ \gamma_{i1} > 0  \end{cases} & j&=1,2,\ldots,\\
j/q_{i} &=\begin{cases} \gamma_{i2} + \cdots+ \gamma_{i(j+1)} \ \ &\text{if} \  \gamma_{i1} = 0\\
\gamma_{i1} + \cdots+ \gamma_{ij} \ \ &\text{if} \ \gamma_{i1} > 0  \end{cases} & j&=1,2,\ldots,
\end{align*}
where $\gamma_{i1} \ge 1/\deg_{V_i}(P_i)$ when it is nonzero. This procedure produces $q_{i}$, as the common denominator of $\gamma_{ij}$, after a finite number of iterations (see~\cite[Page~100]{W78}). From~\eqref{Newton_Puiseux_Steps_1} and~\eqref{Newton_Puiseux_Steps_j}, it is obvious that the powers of $V_i$ in all terms of $P^{(j)}_i$ remain constant $q_{i \ell_i}$ should be bounded above by $\deg_{V_i}(P_i)$. Consequently, we get the result by noting that $\deg_{V_i}(P_i)=2^{O(m+n^2)}$ and $q \le \prod_{i=1}^{\bar{n}} q_{i}$. 
}
\end{proof}

\begin{remark}
 In the presence of the strict complementarity condition, $q=1$ must be exactly 1, i.e., $v_i^{**}$ is a simple root of $P_i(0,V_i) = 0$, since otherwise the analyticity of the central path would fail at $\mu= 0$~\cite[Theorem~1]{H02}. 
 \end{remark}
\begin{remark}
 We should note that $q_{i\ell_i}$ from two different coordinates need not be identical. For instance, the ramification indices of the coordinates of the central path in Example~\ref{lower_bound_analyticity} with $n=4$ are $1$, $2$, or $4$:
\begin{align*}
X(\mu)&=\begin{pmatrix} O(\mu) & 0 & O(\mu^{\frac 34}) & O(\mu^{\frac 12})\\0 & O(\mu^{\frac 34}) & 0 & 0\\O(\mu^{\frac 34}) & 0 & O(\mu^{\frac 12}) & O(\mu^{\frac 14})\\O(\mu^{\frac 12}) & 0 & O(\mu^{\frac 14}) & 1 \end{pmatrix}, \ \ 
y(\mu)=\begin{pmatrix} 0 \\ O(\mu^{\frac 14}) \\ O(\mu^{\frac 12}) \\ O(\mu)\end{pmatrix},\\
S(\mu)&=\begin{pmatrix} 1 & 0 & O(\mu^{\frac 14}) & O(\mu^{\frac 12})\\0 & O(\mu^{\frac 14}) & 0 & 0\\O(\mu^{\frac 14}) & 0 & O(\mu^{\frac 12}) & 0\\O(\mu^{\frac 12}) & 0 & 0 & O(\mu) \end{pmatrix}. 
\end{align*}
\end{remark}
\section{A symbolic algorithm based on the Newton-Puiseux theorem}\label{Newton_Polygon_Alg}
In this section, we address the complexity of computing a feasible $\rho$ using Algorithm~\ref{analytic_param}, which applies the Newton-Puiseux theorem to $P_i$. 

Given the real univariate representation $((f,g),\sigma)$ from Algorithm~\ref{alg:sampling} (after discarding $g_{\bar{n}+1}$), we choose the semi-algebraic description~\eqref{central_solution_formula} and apply the quantifier elimination algorithm to obtain a finite set $\mathcal{P}_i \subset \mathbb{Z}[\mu,V_i]$. We then identify a polynomial $P_i\in \mathcal{P}_i$ such that $P_i(\mu,v_i(\mu))=0$ for sufficiently small $\mu$. This can be done by computing $\SI(\Der(f),\mathcal{V}_{ij})$, see Section~\ref{sign_cond_Thom_encod}, where
\begin{align}\label{branch_identification_by_central_path}
 \mathcal{V}_{ij}:=\zero\Big(g^{\deg_{V_i}(R_{ij})}_0R_{ij}\big(\mu,g_i/g_0\big),\mathbb{R} \langle \mu \rangle\Big)   
\end{align}
 for every $R_{ij} \in \mathcal{P}_i$ and then checking $\sigma \in \SI(\Der(f),\mathcal{V}_{ij})$. 
 
 \subsection{Newton-Puiseux algorithm}\label{Newton_Puiseux_algorithm}
By the the proof of the Newton-Puiseux theorem, the roots of $P_i = 0$ near $\mu = 0$ are constructed as 
\begin{align*}
\varphi_{is}(\mu)=a_{i1} \mu^{\gamma_{i1}} + a_{i2} \mu^{\gamma_{i1} + \gamma_{i2}}+a_{i3} \mu^{\gamma_{i1} + \gamma_{i2}+ \gamma_{i3}}+\cdots,
\end{align*}
where $\varphi_{is}$ corresponds to the segment $s$ (described by the equation $y+\gamma_{i1} x = \beta_{i1}$) of the Newton polygon of $P_i$, $\gamma_{i1} \in \mathbb{Q}$ is the negative of the slope of the segment $s$ of the Newton polygon of $P_i$, and $a_{i1} \in \mathbb{C}$ is a (multiple) root of a polynomial in $\mathbb{Z}[T]$ by which the terms of the lowest order of $\mu$ in the polynomial 
\begin{align}\label{Newton_Puiseux_Steps_1}
P_i(\mu,\mu^{\gamma_{i1}}(V_i+T))
\end{align}
vanish (see~\cite[(3.4) on Page~98]{W78}). Notice that every segment $s$ of the Newton polygon of $P_i$ determines the order $\gamma_{i1}$ of a set of Puiseux series, as root(s) of $P_i = 0$ near $\mu = 0$. This procedure continues by forming the Newton polygon for
\begin{align}\label{Newton_Puiseux_Steps_j}
P_i^{(j+1)}\!:=\mu^{-\beta_{ij}} P_i^{(j)}(\mu,\mu^{\gamma_{ij}}(V_i+a_{ij})), \quad j=1,2,\ldots,
\end{align}
where $P_i^{(1)}:=P_i$, choosing a segment $\gamma_{i(j+1)} > 0$ (which always exists), and finding a (multiple) root $a_{i(j+1)}$ of a polynomial by which the terms of the lowest order of $\mu$ in the polynomial 
\begin{align}\label{Newton_Puiseux_Equation}
P^{j
+1}_i(\mu,\mu^{\gamma_{i(j+1)}}(V_i+T))
\end{align}
vanish. 

In particular, for the Puiseux series~\eqref{central_path_Puiseux_exp} we have $\gamma_{i1} \ge 0$ (because $\psi_{i \ell_i} (\mu)$ is convergent) and 
\begin{align*}
c_{i0} &=\begin{cases} a_{i1} \ \ &\text{if} \  \gamma_{i1} = 0\\
0 \ \ &\text{if} \ \gamma_{i1} > 0  \end{cases}\\
c_{ij} &=\begin{cases} a_{i(j+1)} \ \ &\text{if} \  \gamma_{i1} = 0\\
a_{ij} \ \ &\text{if} \ \gamma_{i1} > 0  \end{cases} & j&=1,2,\ldots.
\end{align*}

The multiplicity of $a_{ij}$ decreases monotonically, and it stabilizes at a constant integer after a finite number of iterations, say $N_i$. Then $q_i$ will be equal to the smallest common denominator of $\gamma_{i1},\ldots,\gamma_{iN_i}$ (see~\cite[Page~100]{W78}).

\begin{remark}\label{root_mult_treatment}
The multiplicity of $a_{ij}$ eventually stabilizes at 1 if $P_i = 0$ has no multiple root in $\mathbb{C} \langle \langle \mu \rangle \rangle$. By~\cite[Theorem~IV.3.5]{W78}, this can be guaranteed if $\content_{\mu}(P_{i}) = 1$ and
\begin{align*}
\deg_{V_i}(\gcd(P_i,\partial P_i/\partial V_i))=0,
\end{align*}
where $\content_{\mu}(P_{i})$ is the content of $P_i \in \mathbb{Z}[\mu][V_i]$, i.e., the greatest common divisor of the coefficients of $P_i$ in $\mathbb{Z}[\mu]$.
\end{remark}

\hide{
\begin{definition}\label{singular_part}
The \textit{singular part} of the Puiseux expansion $\varphi_{is}$ is a finite truncation
\begin{align*}
\sum_{j =1}^{N_i} a_{ij} \mu^{\gamma_{i1} +\cdots+ \gamma_{ij}},
\end{align*}
where $N_i$ is the least integer for which $a_{ij}$ is a simple root of~\eqref{Newton_Puiseux_Equation} for every $j \ge N_i$. 
\end{definition}
}

\subsection{Symbolic computation}
We apply a symbolic version of the Newton-Puiseux algorithm in~\cite[Algorithm~1]{W00} to compute $q_i$ for all bounded Puiseux expansions of $P_i(\mu,V_i)=0$ with limit equal to $v^{**}_i$. The idea of~\cite[Algorithm~1]{W00} is to compute the exponents $\gamma_{ij}$~\cite[Algorithm~1, Step~1]{W00} and then carry the roots symbolically using the minimal polynomials $S_{ij} \in \mathbb{Z}[T]$ of $a_{ij}$~\cite[Algorithm~1, Step~6]{W00} and the minimal polynomials $Z_{ij} \in \mathbb{Z}[T]$ of the primitive elements $\alpha_{ij}$ of the extension field $\mathbb{Q}(a_{i1},\ldots,a_{ij})$, $\alpha_{ij}$ being an algebraic integer such that $\mathbb{Q}(\alpha_{ij})=\mathbb{Q}(a_{i1},\ldots,a_{ij})$~\cite[Algorithm~1, Step~9]{W00}.

\hide{
\begin{remark}
Complexity analysis of computing a truncation of a Puiseux expansion has been performed in~\cite{Ch86} and also~\cite{KT78}, although the complexity bound in~\cite{Ch86} is not explicitly determined. In contrast to~\cite{Ch86,KT78}, the complexity analysis in~\cite{W00} is explicit and involves the size of the coefficients of $P_i$.
\end{remark}
}

\vspace{5px}
\noindent
For the purpose of computing the ramification indices, the Newton-Puiseux algorithm in~\cite[Algorithm~1]{W00} continues until $a_{ij}$ becomes a simple root of ~\eqref{Newton_Puiseux_Equation}. To that end, see Remark~\ref{root_mult_treatment}, we replace $P_i$ by $P_{i}/\content_{\mu}(P_{i})$, and we set $P_i:=\mu^{\alpha_i} P_i(\mu,V_i/\mu^{\theta_i})$, where $\alpha_i$ and $\theta_i$ are nonnegative integers satisfying
\begin{align}\label{prelim_transformation}
\begin{cases}
\alpha_i + o(p_{id_i}) = \theta_id_i, \\
\alpha_i + o(p_{ij}) \ge j \theta_i, \qquad j=1,\ldots,d_i-1, 
\end{cases}
\end{align}  
where $d_i=\deg_{V_i}(P_i)$. This technique~\cite[Page~247]{KT78} ensures the boundedness of the roots of $P_i = 0$. We also assume that $P_i$ is a square-free polynomial for every $i=1,\ldots,\bar{n}$. Notice that if
\begin{align*}
\deg_{V_i}(\gcd(P_i,\partial P_i/\partial V_i))>0,
\end{align*}
then $P_i$ has a multiple factor in $\mathbb{Z}(\mu)[V_i]$~\cite[Proposition~4.15]{BPR06}, and by~\cite[Theorem~I.9.5]{W78} $P_i$ has also a multiple factor in $\mathbb{Z}[\mu,V_i]$. In this case, we can compute the separable part of $P_i$ in $\mathbb{Z}[\mu,V_i]$ (which is also square-free) by applying~\cite[Algorithm~10.1]{BPR06} to $P_i$ and $\partial P_i/\partial V_i$, see~\cite[Corollary~10.15]{BPR06}. 

\begin{proposition}
The Newton-Puiseux algorithm in~\cite[Algorithm~1]{W00} computes the ramification indices in $2^{O(m+n^2)}$ iterations.
\end{proposition}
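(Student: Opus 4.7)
The plan is to bound the number of iterations of the Newton-Puiseux algorithm in~\cite[Algorithm~1]{W00} in terms of the degree $d_i := \deg_{V_i}(P_i)$, and then invoke the degree bound on $P_i$ coming from the quantifier elimination procedure. First, I would observe that by the preprocessing described immediately above the statement, each $P_i$ has been replaced by its separable part in $\mathbb{Z}[\mu,V_i]$, so $P_i$ is square-free. By~\cite[Theorem~IV.3.5]{W78}, this implies that $P_i = 0$ has no repeated Puiseux roots in $\mathbb{C}\langle\langle \mu \rangle\rangle$, and the outer transformation $P_i(\mu,V_i) \mapsto \mu^{\alpha_i} P_i(\mu, V_i/\mu^{\theta_i})$ together with the satisfaction of~\eqref{prelim_transformation} guarantees that all Puiseux roots of the transformed polynomial are bounded, so every iteration is triggered by a segment of the Newton polygon with strictly positive slope.

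Next, I would use the monotonicity property emphasized in the paragraph preceding the claim: the multiplicity of the root $a_{ij}$ selected at iteration $j$ is non-increasing in $j$ (see~\cite[Algorithm~1]{W00} and~\cite[Section IV.3.2]{W78}), and since $P_i$ has no repeated Puiseux roots, this sequence of multiplicities must eventually stabilize at $1$. Starting with multiplicity at most $d_i$ and decreasing by at least one every time it strictly drops, the number $N_i$ of iterations per branch is bounded above by $d_i$. Summing over the at most $d_i$ branches of $P_i = 0$ at $\mu = 0$ (each corresponding to a distinct irreducible factor in $\mathbb{C}\{\mu\}[V_i]$, cf.\ Proposition~\ref{Puiseux_Newton_Thm}), the total number of iterations across all branches is at most $d_i^2$.

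The final step is to plug in the degree bound. Theorem~\ref{14:the:tqe} applied to the quantified formula~\eqref{central_solution_formula}, where the block of existentially quantified variables is $\{T\}$ and the free variables are $(\mu,V_i)$, together with the input degree and size estimates inherited from Lemma~\ref{how_much_small}, yields
\[
\deg_{V_i}(P_i) \;=\; 2^{O(m+n^2)}.
\]
Combined with the preceding paragraph, this gives $N_i \le d_i$ and a total iteration count bounded by $d_i^2 = 2^{O(m+n^2)}$, as claimed. The statement follows for every $i=1,\ldots,\bar n$, and the least common multiple step in Algorithm~\ref{analytic_param} does not increase the iteration count.

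The main obstacle I anticipate is the monotonicity-plus-termination argument: strictly speaking, the multiplicity of $a_{ij}$ need not decrease at every single iteration, only at the iterations where the Newton polygon forces a genuine splitting of branches. The careful point is therefore to argue that ``flat'' iterations (those at which the multiplicity stays constant) can occur only finitely often before a strict decrease must happen, and that the total number of iterations until stabilization at $1$ is still bounded by $d_i$. This follows from the structural analysis of Walsh's scheme in~\cite[pp.~98--100]{W78} applied branch-by-branch; I would make that bookkeeping explicit by tracking, at each iteration, the degree of the factor of $P_i^{(j)}$ in $\mathbb{C}\{\mu\}[V_i]$ containing the branch of interest, and observing that this degree is a non-increasing integer-valued quantity that can be ``stuck'' for only boundedly many steps before the Newton polygon produces a genuine refinement.
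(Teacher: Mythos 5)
Your final complexity estimate is the right one, but the route you take to the iteration bound contains a genuine gap, and it is precisely the point you flag as ``the main obstacle.'' The claim that the number of iterations per branch is bounded by $d_i = \deg_{V_i}(P_i)$ is false: the number of ``flat'' iterations (where the multiplicity of $a_{ij}$ stays constant) is \emph{not} controlled by $d_i$ alone, but by the degree of $P_i$ in $\mu$ as well. A standard example: take $P = \bigl(V - \mu - \mu^2 - \cdots - \mu^k\bigr)^2 - \mu^{2k+1}$, which is square-free and has $\deg_V(P)=2$, yet the Newton--Puiseux iteration produces the characteristic root $a_{ij}$ with multiplicity $2$ for $k$ consecutive steps before the branch finally splits; the iteration count grows linearly in $\deg_\mu(P)$ while $d_i$ stays fixed at $2$. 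Your proposed bookkeeping device --- the degree of the $\mathbb{C}\{\mu\}[V_i]$-factor containing the branch, a non-increasing integer --- cannot rescue the argument, because that integer can indeed be ``stuck'' for a number of steps proportional to $\deg_\mu(P_i)$ (more precisely, to the $\mu$-adic valuation of the discriminant), not for a number of steps bounded in terms of $d_i$.

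The paper avoids this entirely by citing the explicit bound from Walsh,
\begin{align*}
N_i \le 4\,\deg_{\mu}(P_i)\,\deg_{V_i}(P_i)^2,
\end{align*}
which correctly incorporates the $\mu$-degree, and then observing that \emph{both} $\deg_\mu(P_i)$ and $\deg_{V_i}(P_i)$ are $2^{O(m+n^2)}$ by the quantifier-elimination degree estimates. Your conclusion survives because you could equally well invoke $\deg_\mu(P_i)=2^{O(m+n^2)}$, but as written your argument asserts an iteration bound ($N_i \le d_i$, total $\le d_i^2$) that is simply not true. To repair the proof, replace the multiplicity-monotonicity count with either Walsh's stated bound or a bound via the valuation of $\mathrm{Disc}_{V_i}(P_i)$, both of which necessarily involve $\deg_\mu(P_i)$.
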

\begin{proof}
The maximum number of iterations follows from the bound 
\begin{align*}
N_i \le 4\deg_{\mu}(P_i) \deg_{V_i}(P_i)^2
\end{align*}
in~\cite[Page~1170]{W00}.
\end{proof}

\hide{
we then replace $P_i$ by $P_i/\gcd(P_i,\partial P_i/\partial V_i)$}

 \hide{
\begin{remark}
A polynomial complexity bound was derived in~\cite{KT78} for computing $N$ terms of the Puiseux expansion. This procedure builds on the singular part of a Puiseux expansion as follows
\begin{align*}
\varphi_{i s} (\mu)=\sum_{j = 1}^{N_i-1} a_{ij} \mu^{\gamma_{i1}+\ldots+\gamma_{ij}} + \mu^{\gamma_{i1}+\ldots+\gamma_{iN_i}} \tilde{\varphi}_{i s}(\mu^{1/q_{is}}), 
\end{align*}
in which $\tilde{\varphi}_{i s}(\mu)$ is the algebraic function corresponding to $\bar{P}_i(\mu,V_i):=\mu^{-\beta_{N_i}}P(\mu^{q_{is}},\mu^{\gamma_{iN_i}}V_i)$. The algebraic function $\tilde{\varphi}_{i s}(\mu)$ can be constructed by a \textit{Newton-like iteration}
\begin{align}\label{Newton_Like_iter}
W^{(k+1)}(\mu) \equiv W^{(k)}(\mu) - \bar{P}_i(\mu,W^{(k)}(\mu))/\bar{P}'_i(\mu,W^{(k)}(\mu)) \ \ \mod \ \ \mu^{2^{k+1}},
\end{align}
where $W^{(k+1)}$ are convergent power series. In~\eqref{Newton_Like_iter}, $k$ indicates the number of terms in the approximation of $\varphi_{i s} (\mu)$, and $W^{(k)} \to \tilde{\varphi}_{i s}$ as $k \to \infty$~\cite[Corollary~5.1]{KT78}. 
\end{remark}}

\subsection{On computing the optimal $\rho$}\label{Rho_Computation}
In order to obtain the optimal $\rho$, one would still need to identify a factor in~\eqref{central_path_factorization} which describes the graph of the $i^{\mathrm{th}}$ coordinate of the central path. However, this identification cannot be made by solely using the truncation of a Puiseux expansion (although we can generate as many terms as we want using the technique in~\cite{KT78}). More precisely, the Newton-Puiseux algorithm only computes a truncation of the Puiseux expansion of $v_i(\mu)$, which, in general, will not satisfy~\eqref{central_path_representation} exactly. On the other hand, we should also recall that $P_i = 0$ may have more than one branch over $\mu = 0$ with the same center $(0,v_i^{**})$, because $P_i$ is not necessarily irreducible over $\mathbb{C}\{\mu\}$, see Example~\ref{joint_center}. Thus, an optimal $\rho$ may not be always obtained using this approach. 

To get as smallest feasible $\rho$ as possible, we first check the irreducibility of $P_i$. If it holds, then $\zero(P_i,\mathbb{C})$ has exactly one branch over $\mu = 0$, which contains the graph of the $i^{\mathrm{th}}$ coordinate of the central path. Otherwise, we identify all branches of $P_i = 0$ which have a center at $(0,v_i^{**})$. We use the following technical result in Algorithm~\ref{analytic_param} to decide whether $P_i$ is irreducible over $\mathbb{C}\{\mu\}$.

\hide{
By Proposition~\ref{WPT}, the degree of an irreducible Weierstrass polynomial $f \in \mathbb{C}\{X\}[Y]$ determines the ramification index of the Puiseux expansions of $f = 0$ near $x = 0$. In particular, this result can be applied to $P_{i} \in \mathbb{Z}[\mu,V_i]$.} 

\begin{proposition}\label{irreducibility_test}
Suppose that all zeros of $P_i$ are bounded and $P_i$ is square-free. Then $P_i$ is irreducible over $\mathbb{C}\{\mu\}$ iff $P_i = 0$ has a Puiseux expansion with a ramification index equal to $\deg_{V_i}(P_i)$.
\end{proposition}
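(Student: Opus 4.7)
The plan is to link the irreducibility of $P_i$ in $\mathbb{C}\{\mu\}[V_i]$ to the ramification indices of its Puiseux roots by combining the Weierstrass preparation theorem (Proposition~\ref{WPT}) with the structural description of irreducible Weierstrass polynomials (Proposition~\ref{geometric_Puiseux}).

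First I would normalize. Since every root of $P_i = 0$ is bounded, the leading coefficient $p_{id_i}(\mu)$ cannot vanish at $\mu = 0$, so it is a unit in $\mathbb{C}\{\mu\}$ and I may replace $P_i$ by the monic polynomial $\hat{P}_i := P_i / p_{id_i} \in \mathbb{C}\{\mu\}[V_i]$, which is still square-free, has the same Puiseux roots, and is irreducible in $\mathbb{C}\{\mu\}[V_i]$ exactly when $P_i$ is. Using that $\mathbb{C}\{\mu\}$ is a UFD, I would factor $\hat{P}_i = \pi_1 \cdots \pi_r$ into monic irreducibles in $\mathbb{C}\{\mu\}[V_i]$, with $e_k := \deg_{V_i}(\pi_k)$ so that $e_1 + \cdots + e_r = d_i := \deg_{V_i}(P_i)$.

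The main intermediate step is to show that all Puiseux roots of each factor $\pi_k$ share a single center $c_k \in \mathbb{C}$. Granting this, I would shift $V_i \mapsto V_i + c_k$: the polynomial $\pi_k(\mu, V_i + c_k) \in \mathbb{C}\{\mu\}[V_i]$ is monic of degree $e_k$ in $V_i$ and specializes at $\mu = 0$ to $V_i^{e_k}$, so Proposition~\ref{WPT} writes it as $U \cdot W$ with $W$ a Weierstrass polynomial of degree $e_k$ and $U \in \mathbb{C}\{\mu\}[V_i]$. Comparing $V_i$-degrees forces $U = 1$, so $\pi_k(\mu, V_i + c_k) = W$ is itself an irreducible Weierstrass polynomial. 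Proposition~\ref{geometric_Puiseux} then gives that all $e_k$ distinct Puiseux roots of $\pi_k = 0$ share a common ramification index equal to $e_k$. Square-freeness of $P_i$ ensures these $e_k$ roots are distinct from the roots of the other factors, so the set of ramification indices occurring among the Puiseux roots of $P_i = 0$ is exactly $\{e_1, \ldots, e_r\}$. Hence an index equal to $d_i$ appears if and only if $r = 1$, i.e., if and only if $P_i$ is irreducible over $\mathbb{C}\{\mu\}$.

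The hard part will be the centering claim, which I plan to prove by contradiction. Suppose some $\pi_k$ has two distinct Puiseux roots $\psi, \psi'$ with $\psi(0) = c \neq c' = \psi'(0)$, and set $s := |\{j : \psi_j(0) = c\}|$, so $0 < s < e_k$. Boundedness of all roots lets me evaluate $\pi_k(\mu, V_i) = \prod_j (V_i - \psi_j(\mu))$ at $\mu = 0$, showing that $\pi_k(0, V_i + c)$ has order exactly $s$ as a power series in $V_i$. Applying Proposition~\ref{WPT} to $\pi_k(\mu, V_i + c)$ (valid because $\pi_k \in \mathbb{C}\{\mu\}[V_i]$, and the last sentence of the proposition keeps the unit factor in $\mathbb{C}\{\mu\}[V_i]$) produces $\pi_k(\mu, V_i + c) = U \cdot \tilde{W}$ with $\tilde{W}$ Weierstrass of degree $s$. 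Translating back, $\pi_k(\mu, V_i) = U(\mu, V_i - c) \cdot \tilde{W}(\mu, V_i - c)$ is a factorization in $\mathbb{C}\{\mu\}[V_i]$ into two monic factors of strictly positive $V_i$-degrees $e_k - s$ and $s$, contradicting the irreducibility of $\pi_k$. Once this centering claim is in hand, the remainder of the argument is routine bookkeeping.
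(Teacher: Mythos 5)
Your proof is correct, but it takes a more structural route than the paper's. The paper argues by counting: by Proposition~\ref{Puiseux_Newton_Thm}, a Puiseux root of ramification index $q$ belongs to a place $(T^{q},\phi(T))$ that accounts for $q$ of the $\deg_{V_i}(P_i)$ bounded roots, so an index equal to $\deg_{V_i}(P_i)$ forces a single branch over $\mu=0$ (hence a single irreducible factor over $\mathbb{C}\{\mu\}$, cf.\ Remark~\ref{branch_alt_def}), while if every index is smaller there must be several branches and hence several factors. You instead factor the monicized polynomial into irreducibles over $\mathbb{C}\{\mu\}$ and show directly that each irreducible factor of degree $e_k$ becomes, after translating by its common center, an irreducible Weierstrass polynomial, so that Proposition~\ref{geometric_Puiseux} forces all $e_k$ of its roots to have ramification index exactly $e_k$; the equivalence then follows from $e_1+\cdots+e_r=d_i$. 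The centering claim --- that an irreducible factor cannot have roots with two distinct limits, proved by running Weierstrass preparation at one of the limits and extracting a proper factorization into two non-units of $\mathbb{C}\{\mu\}[V_i]$ --- is the substantive extra step, and your argument for it is sound; in effect you re-derive the branch/irreducible-factor correspondence that the paper simply invokes. One caveat, shared by both proofs: the inference ``all roots bounded $\Rightarrow p_{id_i}(0)\neq 0$'' additionally needs $\content_{\mu}(P_i)$ to be a unit (e.g.\ $\mu(V_i^2-\mu)$ has bounded roots, is square-free, and has a root of ramification index $2=\deg_{V_i}$, yet is reducible over $\mathbb{C}\{\mu\}$). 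This is harmless where the proposition is applied, since Step~\ref{Content_Step} of Algorithm~\ref{analytic_param} divides out the content beforehand, but it is an implicit hypothesis in your opening normalization.
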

\begin{proof}
\hide{
In order to apply Proposition~\ref{WPT}, we need to assume that $\mathbf{0}$ is the limit point of the central path (i.e., by making the translation $V_i-v^{**}_i$). To that end, we define the polynomial
\begin{align*}
\bar{P}_i:= P_{i}(\mu,V_i+v_i^{**}) \in \mathbb{R}[\mu][V_i].
\end{align*}

By Proposition~\ref{WPT}, $\bar{P}_i$ can be written uniquely as the product of a unit element in $\mathbb{C}\{\mu\}[V_i]$ and a Weierstrass polynomial $W_i \in \mathbb{C}\{\mu\}[V_i]$ where $\deg(W_i)=o(\bar{P}_{i}(0,V_i))$. Notice that if $\bar{P}_i$ is irreducible over $\mathbb{C}\{\mu\}$, then  $W_i$ must be irreducible as well. 
}
By the assumptions and Proposition~\ref{Puiseux_Newton_Thm}, a Puiseux expansion with ramification index $q_{is}$ implies a branch $(T^{q_{is}},\phi(T))$ and $q_{is}$ distinct roots of $P_i = 0$. Thus $q_{is} = \deg_{V_i}(P_i)$ implies that $P_i = 0$ has exactly one branch over $\mu = 0$, since otherwise $P_i = 0$ would have more than $\deg_{V_i}(P_i)$ bounded roots. Analogously, $q_{is} < \deg_{V_i}(P_i)$, implies that $P_i=0$ must have more than one branch and thus $P_i$ must have more than one factor in $\mathbb{C}\{\mu\}[V_i]$.
\end{proof}
\noindent
By Proposition~\ref{irreducibility_test}, if $P_i$ is irreducible over $\mathbb{C}\{\mu\}$, then $q_i=\deg_{V_i}(P_i)$. Otherwise, we compute the product of ramification indices over all bounded Puiseux expansions of $P_i = 0$ with limit $v_i^{**}$, i.e., only segments with slopes $\le -\theta_i$, see~\eqref{prelim_transformation}, which yield a Puiseux expansion with $S_{i1}(v_i^{**}) = 0$. Let $(\bar{u},\bar{\sigma})$ with $\bar{u}:=\!\big(\bar{f},\big(\bar{g}_0,\bar{g}_{1},\ldots,\bar{g}_{\bar{n}}\big)\big)\! \in \mathbb{Z}[T]^{\bar{n}+2}$ be the real univariate representation of $v^{**}$ from~\cite[Algorithm~3.2]{BM22}, and let \begin{align}\label{branch_identificatioin_by_limit_point}
\mathcal{V}_i^{**}:=\zero\Big(\bar{g}^{\deg(S_{i1})}_0S_{i1}\big(\bar{g}_i/\bar{g}_0\big), \mathbb{R}\Big).
\end{align}
Then given a truncation of a Puiseux expansion of $P_i =0$ from~\cite[Algorithm~1]{W00}, the truth or falsity of $S_{i1}(v_i^{**}) = 0$ can be decided by computing $\SI(\Der(\bar{f}),\{0\})$ if $\gamma_{i1} > \theta_i$ or  $\SI(\Der(\bar{f}),\mathcal{V}_i^{**})$ if $\gamma_{i1} =\theta_i$ and then checking the inclusion $\bar{\sigma} \in \SI(\Der(\bar{f}),\{0\})$ or $\bar{\sigma} \in \SI(\Der(\bar{f}),\mathcal{V}_i^{**})$.

\vspace{5px}
\noindent
Finally, we compute $\rho$ as the least common multiple of all $\rho_i$, where $\rho_i$ is the product of all distinct $q_{is}$ corresponding to the above Puiseux expansions of $P_i = 0$. The outline of the above procedure is summarized in Algorithm~\ref{analytic_param}.

\begin{remark}
It is clear that $\rho=\prod_{i=1}^{\bar{n}} (\deg_{V_i}(P_i)!)$ will be a feasible integer for Problem~\ref{CP_Analytic_Extension}. However, analogous to the proof of Theorem~\ref{thm_reparametrization}, our goal is to compute the smallest possible feasible $\rho$.  
\end{remark}

\begin{remark}
It is worth noting that Algorithm~\ref{analytic_param} outputs the optimal $\rho$ as long as the branch of $P_i = 0$ containing the graph of the $i^{\mathrm{th}}$ coordinate of the central path is isolated for every $i=1,\ldots,\bar{n}$. In particular, an optimal $\rho$ is obtained if $P_i$ is irreducible over $\mathbb{C}\{\mu\}$ for all $i=1,\ldots,\bar{n}$.
\end{remark}

\begin{algorithm}
\caption{Reparametrization based on the Newton-Puiseux theorem}
\label{analytic_param}
\begin{algorithmic}
\State Input: The polynomial $\tilde{Q} \in \mathbb{Z} [\mu,V_1,\ldots,V_{\bar{n}+1}]$; An empty set $\mathcal{L}_i$ for every $i=1,\ldots,\bar{n}$ \\
\Return A feasible $\rho$ for which $\big(X({\mu^{\rho}),y(\mu^{\rho}),S(\mu^{\rho}})\big)$ is analytic at $\mu = 0$

\vspace{5px}
\State \textbf{Procedure:}
\begin{enumerate}
\item\label{Rep_Step} Apply Algorithm~\ref{alg:sampling} to $\tilde{Q}$ and let the output be $((f,g),\sigma)$.
\item Apply~\cite[Algorithm~3.2]{BM22} to $\tilde{Q}$ and let the output be $((\bar{f},\bar{g}),\bar{\sigma})$.
\item\label{Quant_Elim_Step} For every $i=1,\ldots,\bar{n}$, apply the quantifier elimination algorithm~\cite[Algorithm 14.5]{BPR06} with input~\eqref{central_solution_formula}. Let the output be a finite set $\mathcal{P}_i$ of polynomials in $\mathbb{Z}[\mu,V_i]$.
\item\label{Polynomial_Identification_Step} For every $i=1,\ldots,\bar{n}$ and every $R_{ij} \in \mathcal{P}_i$ apply~\cite[Algorithm~10.13]{BPR06} (Univariate Sign Determination) with input $g^{\deg_{V_i}(R_{ij})}_0R_{ij}\big(\mu,g_i/g_0\big)$ and $\Der(f)$, and let $\SI(\Der(f),\mathcal{V}_{ij})$ be the output, see~\eqref{branch_identification_by_central_path}. If $\sigma \in \SI(\Der(f),\mathcal{V}_{ij})$, then choose $R_{ij}$ as the polynomial whose zero set contains the graph of the $i^{\mathrm{th}}$ coordinate of the central path for sufficiently small $\mu$. Set $P_i := R_{ij}$.
\item\label{Boundedness_Step} For every $i=1,\ldots,\bar{n}$ apply the procedure~\eqref{prelim_transformation} to $P_i$.
\item\label{Content_Step} For every $i=1,\ldots,\bar{n}$ compute $\content_{\mu}(P_{i})$ by applying~\cite[Algorithm~10.1]{BPR06} iteratively to the coefficients of $P_i$ (as a polynomial in $V_i$). Then replace $P_{i}$ by $P_{i}/\content_{\mu}(P_{i})$.
\item\label{Sq_Free_Step} For every $i=1,\ldots,\bar{n}$ apply~\cite[Algorithm~10.1]{BPR06} with inputs $P_i,\partial P_i/\partial V_i \in \mathbb{Z}[\mu][V_i]$ and replace $P_i$ by its separable part.
\item\label{irreducible_Step} For every $i=1,\ldots,\bar{n}$ apply the symbolic Newton-Puiseux algorithm~\cite[Algorithm~1]{W00} to only one segment of the Newton polygon of $P_{i}$. If for the given segment the ramification index of the Puiseux expansion is equal to $\deg_{V_i}(P_i)$, then set $\rho_i=\deg_{V_i}(P_i)$. Otherwise, go to Step~\ref{Newton_Polygon_Step}.  
\item\label{Newton_Polygon_Step} For every $i$ failing the condition of Step~\ref{irreducible_Step}, apply the symbolic Newton-Puiseux algorithm~\cite[Algorithm~1]{W00} to $P_{i}$ for the rest of segments $s$ with negative slope of magnitude $\ge \theta_i$, see~\eqref{prelim_transformation}. Let the output be $q_{is}$, $\gamma_{i1}$, and the minimal polynomial $S_{i1} \in \mathbb{Z}[T]$.
\item\label{Convergence_Step} Given $\gamma_{i1}$ and $S_{i1}$ for every $i$ from Step~\ref{Newton_Polygon_Step}, if $\gamma_{i1} > \theta_i$, then apply~\cite[Algorithm~10.11]{BPR06} (Sign Determination) with input $\{0\}$ and $\Der(\bar{f})$. If $\gamma_{i1} = \theta_i$, then apply~\cite[Algorithm~10.13]{BPR06} (Univariate Sign Determination) with input $\bar{g}^{\deg(S_{i1})}_0S_{i1}\big(\bar{g}_i/\bar{g}_0\big)$ and $\Der(\bar{f})$. Let the output be $\SI(\Der(\bar{f}),\{0\})$ or $\SI(\Der(\bar{f}),\mathcal{V}_i^{**})$, respectively, see~\eqref{branch_identificatioin_by_limit_point}. If $\bar{\sigma} \in \SI(\Der(\bar{f}),\{0\})$ or $\bar{\sigma} \in \SI(\Der(\bar{f}),\mathcal{V}_i^{**})$, then add $s$ to $\mathcal{L}_{i}$.
\item\label{Final_Comp_Step} For every $i$ from Step~\ref{Convergence_Step} compute $\rho_i:=\prod_{s \in \mathcal{L}_{i}} q_{is}$. Then compute the least common multiple of $\rho_{i}$ over $i=1,\ldots,\bar{n}$.
\end{enumerate}
\end{algorithmic}
\end{algorithm}

\begin{example}
It is easy to see from Figure~\ref{3elliptope_alg_curve} that $F = 0$ is not irreducible over $\mathbb{C}\{\mu\}$, because it involves two isolated branches over $\mu = 0$. However, Algorithm~\ref{analytic_param} still returns the optimal $\rho = 2$ for Example~\ref{derivatives_diverge}. 
\end{example}

\subsection{Complexity and the proof of correctness} The correctness of Algorithm~\ref{analytic_param} follows from the correctness of Algorithm~\ref{alg:sampling},~\cite[Algorithm~3.2]{BM22}, Theorem~\ref{14:the:tqe}, and~\cite[Algorithm~1]{W00}. Step~\ref{Polynomial_Identification_Step} identifies $P_i$ for every $i=1,\ldots,\bar{n}$. Step~\ref{Boundedness_Step} guarantees that the roots of $P_i = 0$ are all bounded, and Steps~\ref{Content_Step} and~\ref{Sq_Free_Step} guarantee that the Puiseux expansions of $P_i$ are all distinct. Step~\ref{irreducible_Step} checks the irreducibility of $P_i$ using Proposition~\ref{irreducibility_test}. Step~\ref{Newton_Polygon_Step} applies the symbolic Newton-Puiseux algorithm to every segment of the Newton polygon of $P_i$, if $P_i$ is not irreducible over $\mathbb{C}\{\mu\}$. Step~\ref{Convergence_Step} decides the truth or falsity of $S_{i1}(v_i^{**})= 0$ for all truncated Puiseux expansions obtained from Step~\ref{Newton_Polygon_Step} and thus identifies the branches centered at $(0,v_i^{**})$. 

\vspace{5px}
\noindent
Now, we can prove Theorem~\ref{Algorithm_Complexity}. First, we need the following results.
\begin{lemma}[Theorem~3.9 in~\cite{BM22}]\label{real_univariate_limit_point}
There exists an algorithm to compute the real univariate representation $(\bar{u},\bar{\sigma})$ using $2^{O(m+n^2)}$ arithmetic operations, where
\begin{align*}
\deg(\bar{f}),\deg(\bar{g})=2^{O(m+n^2)}. 
\end{align*}
\end{lemma}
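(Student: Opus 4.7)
The plan is to adapt the structure of Algorithm~\ref{alg:sampling} by composing parameterized bounded algebraic sampling with the limit map $\lim_\mu : \mathbb{R}\langle\mu\rangle_b \to \mathbb{R}$. First I would apply~\cite[Algorithm~12.18]{BPR06} to $\tilde{Q}$ with parameter $\mu$ in order to obtain the finite set $\mathcal{U}$ of parameterized univariate representations, produce the Thom encodings of the associated roots of $f$ via~\cite[Algorithm~10.14]{BPR06}, and then invoke~\cite[Algorithm~14.3]{BPR06} on the sentences~\eqref{central_path_selection} to identify the pair $((f,g),\sigma)$ describing the central path for all sufficiently small positive $\mu$. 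By Theorem~\ref{uni_var_degree_of_polynomials}, these three steps together require $2^{O(m+n^2)}$ arithmetic operations and yield polynomials of degree $2^{O(m+n^2)}$ in $\mathbb{Z}[\mu,T]$.

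Next I would pass to the limit as $\mu \downarrow 0$. Since $v(\mu)=g(\mu,t_\sigma)/g_0(\mu,t_\sigma)$ lies in $\mathbb{R}\langle\mu\rangle_b^{\bar n}$ by Lemma~\ref{central_path_ball}, the limit $v^{**}$ exists in $\mathbb{R}^{\bar n}$. To encode this limit algebraically, I would apply a normalization step analogous to~\eqref{prelim_transformation} in the variable $\mu$, multiplying $f$ and $g_0, g_1, \ldots, g_{\bar n}$ by appropriate (possibly negative) powers of $\mu$ so that their specializations at $\mu = 0$ are nonzero and preserve the root structure of the branch realizing $\sigma$. Setting $\bar{f}(T):=f(0,T)$ and $\bar{g}_i(T):=g_i(0,T)$ after normalization, Proposition~\ref{prop:Thom} guarantees that the Thom encoding $\bar\sigma$ inherited from $\sigma$ uniquely identifies a real root $t_{\bar\sigma}$ of $\bar f$ with $v_i^{**}=\bar g_i(t_{\bar\sigma})/\bar g_0(t_{\bar\sigma})$. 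An equivalent approach would be to compute a cylindrical algebraic decomposition of $\mathbb{R}^2$ adapted to $\Der(f)$ and extract the one-dimensional cell whose closure meets the line $\mu=0$ along the branch realizing $\sigma$.

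The degree and complexity estimates are preserved under these operations: the normalization only requires computing a few lowest-order terms in $\mu$ of the coefficients of $f$ and $g$, and the specialization $\mu=0$ is free; if a cylindrical decomposition is used instead, its complexity and output size for a single bivariate polynomial of degree $2^{O(m+n^2)}$ are again $2^{O(m+n^2)}$ by~\cite[Chapter~11]{BPR06}. The main obstacle is correctly extracting the limit of the parametric univariate representation when the real roots of $f(\mu,T)$ collide at $\mu = 0$, which would make the Thom encoding $\sigma$ (defined for positive $\mu$) ambiguous on $\bar f$. Overcoming this is precisely what the normalization step accomplishes: after dividing out the $\mu$-valuations of $g_0, g_1, \ldots, g_{\bar n}$ and suitably scaling $f$, the surviving root of $\bar f$ corresponding to the central path is isolated, and its sign condition on $\Der(\bar f)$ is uniquely inherited from $\sigma$ via Thom's lemma.
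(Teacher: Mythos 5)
First, a point of comparison: the paper itself offers no proof of this lemma. It is imported verbatim as Theorem~3.9 of \cite{BM22}, whose Algorithm~3.2 is then invoked as a black box in Algorithm~\ref{analytic_param}. Your proposal is therefore a from-scratch reconstruction, and its overall architecture --- parameterized bounded algebraic sampling on $\tilde{Q}$, Thom encodings, General Decision to select the branch carrying the central path, and then a limit as $\mu \downarrow 0$ --- is the right one and consistent with how \cite{BM22} proceeds. The complexity and degree bounds for the first three steps are correctly accounted for by Theorem~\ref{uni_var_degree_of_polynomials} and Lemma~\ref{how_much_small}.

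The gap is in the limit-extraction step, which is where essentially all of the content of the cited theorem sits. Setting $\bar f(T):=f(0,T)$ and $\bar g_i(T):=g_i(0,T)$ after ``multiplying by suitable powers of $\mu$'' does not work as stated. First, boundedness of the represented point $v(\mu)=g(\mu,t_\sigma(\mu))/g_0(\mu,t_\sigma(\mu))$ does not imply boundedness of the root $t_\sigma(\mu)$ itself as $\mu\downarrow 0$; if $t_\sigma(\mu)\to\infty$ there is no root of $\bar f$ to specialize to, and no rescaling of the coefficients of $f$ by powers of $\mu$ alone repairs this --- one must first perform a change of the variable $T$, which is precisely the nontrivial first stage of \cite[Algorithm~12.20]{BPR06} (Limit of Bounded Points), the standard tool here. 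Second, the Thom encoding $\bar\sigma$ is not ``inherited'' from $\sigma$ via Proposition~\ref{prop:Thom}: when several roots of $f(\mu,\cdot)$ collide at $\mu=0$, derivatives of $f$ whose signs were $\pm 1$ along the branch can degenerate to $0$ in the limit, so $\bar\sigma$ must be recomputed (by a sign determination at the limit root), not read off from $\sigma$; Thom's lemma tells you the limit root \emph{has} a characterizing encoding, not that you already know it. Third, \eqref{prelim_transformation} is a normalization of a single bivariate polynomial designed to bound its roots; it does not by itself supply the compatibility between the rescalings of $f$, $g_0$ and the $g_i$ needed so that the limit of the quotient equals the quotient of the specializations. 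None of these issues is fatal --- they are exactly what \cite[Algorithm~12.20]{BPR06} handles, and your complexity and degree bounds survive --- but as written the proposal asserts, rather than establishes, the one step that is not routine.
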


\begin{lemma}\label{Puiseux_Complexity}
The ramification indices of Puiseux expansions of $P_i = 0$ can be computed using $2^{O(m+n^2)}$ arithmetic operations. 
\end{lemma}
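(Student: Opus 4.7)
The plan is to combine the degree and bitsize bounds on $P_i$ coming from the quantifier elimination step with the complexity estimate of the symbolic Newton--Puiseux algorithm of~\cite[Algorithm~1]{W00}, together with a quick check that the preprocessing Steps~\ref{Boundedness_Step}--\ref{Sq_Free_Step} of Algorithm~\ref{analytic_param} are themselves cheap.

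First I would apply Theorem~\ref{14:the:tqe} to the formula~\eqref{central_solution_formula}. The input polynomials are $V_i g_0 - g_i$ together with the elements of $\Der(f)$; by Lemma~\ref{how_much_small} they all lie in $\mathbb{Z}[\mu, T, V_i]$ with degrees $O(1)^{\bar{n}+1}$ and bitsizes bounded by $\tau 2^{O(m+n^2)}$. With one quantified block of size $k_1 = 1$ (the variable $T$) and a free block of size $\ell = 2$ (the variables $\mu, V_i$), Theorem~\ref{14:the:tqe} yields $\deg_{\mu}(P_i), \deg_{V_i}(P_i) = 2^{O(m+n^2)}$ and coefficient bitsizes $\tau 2^{O(m+n^2)}$. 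Next, I would observe that the preprocessing in Algorithm~\ref{analytic_param} preserves these orders of magnitude: the rescaling~\eqref{prelim_transformation} is a change of variable whose parameters $\alpha_i, \theta_i$ are read off from orders of univariate integer polynomials, and the content extraction and separable-part computation through~\cite[Algorithm~10.1]{BPR06} are subresultant-style GCDs with complexity polynomial in $\deg_\mu(P_i)\deg_{V_i}(P_i)$ and the bitsizes, i.e., still $2^{O(m+n^2)}$.

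Finally I would invoke the complexity analysis of~\cite[Algorithm~1]{W00}. The iteration bound recalled in the preceding proposition is $N_i \le 4\deg_\mu(P_i)\deg_{V_i}(P_i)^2 = 2^{O(m+n^2)}$, and each iteration of~\cite[Algorithm~1]{W00} performs work polynomial in $\deg_\mu(P_i)$, $\deg_{V_i}(P_i)$, and the coefficient bitsizes, which are all of order $2^{O(m+n^2)}$ or smaller. Hence each invocation of the symbolic Newton--Puiseux algorithm on a single $P_i$ costs $2^{O(m+n^2)}$ arithmetic operations; summing over $i = 1,\ldots,\bar{n}$ with $\bar{n} = m + 2n^2$ absorbs into the same exponential bound. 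The main bookkeeping obstacle is that the primitive elements $\alpha_{ij}$ of the extensions $\mathbb{Q}(a_{i1},\ldots,a_{ij})$ tracked in~\cite[Algorithm~1, Step~9]{W00} have minimal polynomials $Z_{ij}$ whose degrees can multiply along a branch; this is precisely why Algorithm~\ref{analytic_param} cannot avoid iterating until the multiplicity of $a_{ij}$ stabilizes. However, this multiplicative blow-up is already absorbed into the polynomial bound given in~\cite{W00}, so citing that analysis as a black box closes the argument.
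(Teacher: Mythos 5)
Your proposal is correct and follows essentially the same route as the paper: bound $\deg_{V_i}(P_i)$ and $\deg_\mu(P_i)$ by $2^{O(m+n^2)}$ via Theorem~\ref{14:the:tqe} applied to~\eqref{central_solution_formula}, then invoke the complexity estimate of~\cite[Theorem~1]{W00}, which is polynomial in the product of these degrees. The paper's own proof is just this two-line combination (it does not even discuss the preprocessing steps or the primitive-element bookkeeping, and since complexity is counted in arithmetic operations the bitsize bounds are not needed), so your extra verifications are harmless but not required.
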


\begin{proof}
The result is immediate from~\cite[Theorem~1]{W00} and Theorem~\ref{14:the:tqe}. The total complexity of computing a ramification index~\cite[Theorem~1]{W00} is 
\begin{align*}
(\deg_{V_i} (P_i) \cdot \deg_{\mu}(P_i)) ^{O(1)},
\end{align*}
where 
\hide{$\tau'$ is an upper bound on the bitsizes of the coefficients of $P_i$.}by Theorem~\ref{14:the:tqe} we have
\begin{align*}
\deg_{V_i} (P_i),\deg_{\mu} (P_i)=2^{O(m+n^2)}. 
\end{align*}
\end{proof}

\begin{proof}[Proof of Theorem~\ref{Algorithm_Complexity}]
The overall complexity of Algorithm~\ref{analytic_param} is dominated by the complexity of Algorithm~\ref{alg:sampling}, ~\cite[Algorithm~3.2]{BM22}, and the complexity of the quantifier elimination~\cite[Algorithm 14.5]{BPR06}. By Lemmas~\ref{how_much_small} and~\ref{real_univariate_limit_point}, Theorem~\ref{14:the:tqe}, and Theorem~\ref{uni_var_degree_of_polynomials}, Steps~\ref{Rep_Step} to~\ref{Quant_Elim_Step} run with complexity $2^{O(m+n^2)}$, and the quantifier elimination applied to~\eqref{central_solution_formula} outputs quantifier-free formulas with polynomials of degree $2^{O(m+n^2)}$ and coefficients of bitsizes $\tau 2^{O(m+n^2)}$. The complexity of Steps~\ref{Polynomial_Identification_Step} to~\ref{Sq_Free_Step} and Steps~\ref{Convergence_Step} and~\ref{Final_Comp_Step} depend on $\bar{n}$, $\card(\mathcal{P}_i)$, $\deg(R_{ij})$, $\deg(f)$, $\deg(\bar{f})$, $\card(\Der(\bar{f}))$, and $\card(\Der(f))$, and they are all $2^{O(m+n^2)}$. Steps~\ref{irreducible_Step} and~\ref{Newton_Polygon_Step} apply the Newton-Puiseux algorithm to segments of the Newton polygon of $P_i$ which, by Lemma~\ref{Puiseux_Complexity}, have the total complexity $2^{O(m+n^2)}$. The doubly exponential bound on the magnitude of $\rho$ follows from the proof of Theorem~\ref{thm_reparametrization}, $\deg_{V_i}(P_i)=2^{O(m+n^2)}$, and the inequality 
\begin{align*}
\prod_{s \in \mathcal{L}_{i}} q_{is} \le \bigg\lceil e^{\tfrac{\deg_{V_i}(P_i)}{e}} \bigg\rceil.
\end{align*}
\end{proof}

\begin{remark}
Suppose that for every $i=1,\ldots,\bar{n}$, a factorization $P_i=\prod_j W_{ij}$ is available in Step~\ref{irreducible_Step} of Algorithm~\ref{analytic_param}, where $W_{ij} \in \mathbb{C}\{\mu\}[V_i]$ is an irreducible factor of $P_i$. Then the zero set of $W_{ij} \in \mathbb{C}\{\mu\}[V_i]$, for every $j$, represents a single branch of $P_i$ over $\mu = 0$. Let us represent $W_{ij}$ as $\real(W_{ij}) + \imag(W_{ij}) \sqrt{-1}$, where $\real(W_{ij}) \in \mathbb{R}\{\mu\}[V_i] $ and $\imag(P_i) \in \mathbb{R}\{\mu\}[V_i]$ are the real and imaginary parts of $W_{ij}$, respectively, i.e., their coefficients are convergent power series obtained by real and imaginary parts of the coefficients of $W_{ij}$. Since $\mathbb{R}\{\mu\} \subset \mathbb{R}\langle \mu \rangle$ is an ordered integral domain, the Univariate Sign Determination~\cite[Algorithm~10.13]{BPR06} is again applicable, with complexity $2^{O(m+n^2)}$ in the ordered ring $\mathbb{R}\{\mu\}$, to check the inclusion of the graph of the $i^{\mathrm{th}}$ coordinate of the central path in $\zero(W_{ij},\mathbb{C}\{\mu\})$. With this modification, we would still get the optimal $\rho$ from Algorithm~\ref{analytic_param}. 

\vspace{5px}
\noindent
We are not aware of any factorization procedure over $\mathbb{C}\{\mu\}$. In practice, however, a factorization procedure over $\mathbb{C}\{\mu\}$ would only generate a finite number of terms from each factor. Thus, even with the existence of a factorization method, we would not be able to identify the branch of the central path, because only a truncation of the convergent power series, i.e., the coefficients of an irreducible factor, would be available to us.  
\end{remark}

\hide{
\begin{remark}
Obviously $o(\bar{P}_{i}(0,V_i)) = 1$ for all $i=1,\ldots,\bar{n}$ imply the analyticity of the central path at $\mu  = 0$. Hence, the above sequential procedure yields a subroutine to check the strict complementarity condition.
\end{remark}
}

\section{Concluding remarks and future research}\label{conclusion}
In this paper, we studied the analyticity of the central path of SDO in the absence of the strict complementarity condition. In essence, the superlinear convergence of primal-dual IPMs rests on the analyticity of the central path at the limit point, which is guaranteed only under the stronger condition of strict complementarity. By means of the semi-algebraic description~\eqref{central_solution_formula} and the Puiseux expansions of the roots of $P_i = 0$ for every $i=1,\ldots,\bar{n}$, we showed that there exists a reparametrization $\mu \mapsto \mu^{\rho}$ such that $\big(X(\mu^{\rho}),y(\mu^{\rho}),S(\mu^{\rho})\big)$ is analytic at $\mu = 0$, in which $\rho$ attains its optimal value at $q$, i.e., the least common multiple of ramification indices of the Puiseux expansions of $v_i(\mu)$. Our semi-algebraic approach provides an upper bound $2^{O(m^2+n^2m+n^4)}$ on $q$ and leads to Algorithm~\ref{analytic_param} for an efficient computation of a feasible $\rho$. Algorithm~\ref{analytic_param} computes a feasible $\rho$, using $2^{O(m+n^2)}$ arithmetic operations, as the least common multiple of $\prod_s q_{is}$, where the product is over all distinct ramification indices corresponding to bounded Puiseux expansions with limit $v^{**}_i$. We proved that a feasible $\rho$ from Algorithm~\ref{analytic_param} is bounded by $2^{2^{O(m+n^2)}}$. In case that the polynomials $P_i$ are all irreducible over $\mathbb{C}\{\mu\}$, Algorithm~\ref{analytic_param} outputs the optimal $\rho$.  

\vspace{5px}
\noindent
\paragraph{\textit{Real analyticity of a semi-algebraic function}}
Broadly speaking, Algorithm~\ref{analytic_param} can be modified to guarantee the analyticity of any semi-algebraic function. In contrast to complex analyticity, which can be verified using the Cauchy-Riemann conditions, checking the real analyticity is a harder problem. Suppose that the graph of a bounded semi-algebraic function $f:\mathbb{R} \to \mathbb{R}$ is described by a quantified formula $\Psi$. Then, as a sufficient condition, the analyticity of $f$ at a given $x_0 \in \mathbb{R}$ is confirmed if an analog of Algorithm~\ref{analytic_param} with input $\Psi$ returns $\rho = 1$. However, the output would be inconclusive if $\rho >1$. The reason lies in the fact that neither Algorithm~\ref{analytic_param} nor its analog for an arbitrary semi-algebraic function $f$ distinguishes between different branches, and therefore $\rho$ may not be optimal. This is the subject of our future research. 

 
\section*{Acknowledgments} 
\noindent
The authors are supported by NSF grants CCF-1910441 and CCF-2128702.

\bibliographystyle{abbrv}
\bibliography{mybibfile}

\def\cprime{$'$} \def\cprime{$'$}
\begin{thebibliography}{10}

\bibitem{AHO98}
F.~Alizadeh, J.-P.~A. Haeberly, and M.~L. Overton.
\newblock Primal-dual interior-point methods for semidefinite programming:
  Convergence rates, stability and numerical results.
\newblock {\em SIAM Journal on Optimization}, 8(3):746--768, 1998.

\bibitem{B17}
S.~Basu.
\newblock Algorithms in real algebraic geometry: a survey.
\newblock In {\em Real Algebraic Geometry}, volume~51 of {\em Panor.
  Synth\`eses}, pages 107--153. Soc. Math. France, Paris, 2017.

\bibitem{BM22}
S.~Basu and A.~Mohammad-Nezhad.
\newblock On the central path of semidefinite optimization: Degree and
  worst-case convergence rate.
\newblock {\em SIAM Journal on Applied Algebra and Geometry}, 6(2):299--318,
  2022.

\bibitem{BPR06}
S.~Basu, R.~Pollack, and M.-F. Roy.
\newblock {\em Algorithms in Real Algebraic Geometry}.
\newblock Springer, New York, NY, USA, 2006.

\bibitem{BR10}
S.~Basu and M.-F. Roy.
\newblock Bounding the radii of balls meeting every connected component of
  semi-algebraic sets.
\newblock {\em Journal of Symbolic Computation}, 45(12):1270--1279, 2010.

\bibitem{BL89a}
D.~A. Bayer and J.~C. Lagarias.
\newblock The nonlinear geometry of linear programming. {I} affine and
  projective scaling trajectories.
\newblock {\em Transactions of the American Mathematical Society},
  314(2):499--526, 1989.

\bibitem{BL89b}
D.~A. Bayer and J.~C. Lagarias.
\newblock The nonlinear geometry of linear programming. {II} {Legendre}
  transform coordinates and central trajectories.
\newblock {\em Transactions of the American Mathematical Society},
  314(2):527--581, 1989.

\bibitem{BG91}
C.~A. Berenstein and R.~Gay.
\newblock {\em Complex Variables. An Introduction}.
\newblock Springer-Verlag, New York, NY, USA, 1991.

\bibitem{BCSS98}
L.~Blum, F.~Cucker, M.~Shub, and S.~Smale.
\newblock {\em Complexity and Real Computation}.
\newblock Springer-Verlag, New York, 1998.
\newblock With a foreword by Richard M. Karp.

\bibitem{BCR98}
J.~Bochnak, M.~Coste, and M.-F. Roy.
\newblock {\em Real Algebraic Geometry}.
\newblock Springer-Verlag, Berlin, 1998.

\bibitem{Kl02}
E.~de~Klerk.
\newblock {\em Aspects of Semidefinite Programming. Interior Point Algorithms
  and Selected Applications}, volume~65 of {\em of Series Applied
  Optimization}.
\newblock Springer, New York, NY, USA, 2006.

\bibitem{DSV12}
J.~A. De~Loera, B.~Sturmfels, and C.~Vinzant.
\newblock The central curve in linear programming.
\newblock {\em Foundations of Computational Mathematics}, 12(4):509--540, Aug
  2012.

\bibitem{DMS05}
J.-P. Dedieu, G.~Malajovich, and M.~Shub.
\newblock On the curvature of the central path of linear programming theory.
\newblock {\em Foundations of Computational Mathematics}, 5(2):145--171, Apr
  2005.

\bibitem{F01}
G.~Fischer.
\newblock {\em Plane Algebraic Curves}, volume~15.
\newblock American Mathematical Society, Providence, Rhode Island, USA, 2001.

\bibitem{GS98}
D.~Goldfarb and K.~Scheinberg.
\newblock Interior point trajectories in semidefinite programming.
\newblock {\em SIAM Journal on Optimization}, 8(4):871--886, 1998.

\bibitem{G94}
O.~G{\"u}ler.
\newblock Limiting behavior of weighted central paths in linear programming.
\newblock {\em Mathematical Programming}, 65(1):347--363, Feb 1994.

\bibitem{H98}
J.-P.~A. Haeberly.
\newblock Remarks on nondegeneracy in mixed semidefinite-quadratic programming,
  1998.
\newblock Unpublished memorandum, available from
  \url{http://citeseerx.ist.psu.edu/viewdoc/download?doi=10.1.1.43.7501&rep=rep1&type=pdf}.

\bibitem{H02}
M.~Halick{\'a}.
\newblock Analyticity of the central path at the boundary point in semidefinite
  programming.
\newblock {\em European Journal of Operational Research}, 143(2):311--324,
  2002.

\bibitem{HKR02}
M.~Halick{\'a}, E.~de~Klerk, and C.~Roos.
\newblock On the convergence of the central path in semidefinite optimization.
\newblock {\em SIAM Journal on Optimization}, 12(4):1090--1099, 2002.

\bibitem{HLMZ21}
J.~D. Hauenstein, A.~C. Liddell, S.~McPherson, and Y.~Zhang.
\newblock Numerical algebraic geometry and semidefinite programming.
\newblock {\em Results in Applied Mathematics}, 11:100166, 2021.

\bibitem{IPRT00}
T.~Ill\'es, J.~Peng, C.~Roos, and T.~Terlaky.
\newblock A strongly polynomial rounding procedure yielding a maximally
  complementary solution for $p^*(\kappa)$ linear complementarity problems.
\newblock {\em SIAM Journal on Optimization}, 11(2):320--340, 2000.

\bibitem{JS87}
G.~A. Jones and D.~Singerman.
\newblock {\em Complex Functions. An Algebraic and Geometric Viewpoint}.
\newblock Cambridge University Press, Cambridge, UK, 1987.

\bibitem{KT78}
H.~T. Kung and J.~F. Traub.
\newblock All algebraic functions can be computed fast.
\newblock {\em Journal of Association for Computing Machinery}, 25(2):245--260,
  1978.

\bibitem{LSZ98}
Z.-Q. Luo, J.~F. Sturm, and S.~Zhang.
\newblock Superlinear convergence of a symmetric primal-dual path following
  algorithm for semidefinite programming.
\newblock {\em SIAM Journal on Optimization}, 8(1):59--81, 1998.

\bibitem{M68}
J.~Milnor.
\newblock {\em Singular Points of Complex Hypersurfaces}.
\newblock Annals of Mathematics Studies. Princeton University Press;University
  of Tokyo Press, Princeton, NJ, 1968.

\bibitem{MT19}
A.~Mohammad-Nezhad and T.~Terlaky.
\newblock On the identification of the optimal partition for semidefinite
  optimization.
\newblock {\em INFOR: Information Systems and Operational Research},
  58(2):225--263, 2020.

\bibitem{MT96}
R.~Monteiro and T.~Tsuchiya.
\newblock Limiting behavior of the derivatives of certain trajectories
  associated with a monotone horizontal linear complementarity problem.
\newblock {\em Mathematics of Operations Research}, 21(4):793--814, 1996.

\bibitem{M87}
K.~G. Murty and S.~N. Kabadi.
\newblock Some {NP}-complete problems in quadratic and nonlinear programming.
\newblock {\em Mathematical Programming}, 39(2):117--129, 1987.

\bibitem{NN94}
Y.~Nesterov and A.~Nemirovskii.
\newblock {\em Interior-Point Polynomial Algorithms in Convex Programming}.
\newblock Society for Industrial and Applied Mathematics, Philadelphia, PA,
  USA, 1994.

\bibitem{NFOM05}
J.~Neto, O.~Ferreira, and R.~Monteiro.
\newblock Asymptotic behavior of the central path for a special class of
  degenerate {SDP} problems.
\newblock {\em Mathematical Programming}, 103(3):487--514, 2005.

\bibitem{PS04}
M.~Prei{\ss} and J.~Stoer.
\newblock Analysis of infeasible-interior-point paths arising with semidefinite
  linear complementarity problems.
\newblock {\em Mathematical Programming}, 99(3):499--520, Apr 2004.

\bibitem{R97}
M.~V. Ramana.
\newblock An exact duality theory for semidefinite programming and its
  complexity implications.
\newblock {\em Mathematical Programming}, 77(2, Ser. B):129--162, 1997.

\bibitem{RP96}
M.~V. Ramana and P.~M. Pardalos.
\newblock Semidefinite programming.
\newblock In T.~Terlaky, editor, {\em Interior Point Methods of Mathematical
  Programming}, pages 369--398. Springer US, New York, NY, USA, 1996.

\bibitem{SWP08}
J.~R. Sendra, F.~Winkler, and S.~P\'erez-Diaz.
\newblock {\em {Rational Algebraic Curves. A Computer Algebra Approach}}.
\newblock Algorithms and Computation in Mathematics. Springer, New York, NY,
  USA, 2008.

\bibitem{S13}
I.~R. Shafarevich.
\newblock {\em Basic Algebraic Geometry 1. Varieties in Projective Space}.
\newblock Springer-Verlag, Heidelberg, Germany, 2013.

\bibitem{SW98}
J.~Stoer and M.~Wechs.
\newblock Infeasible-interior-point paths for sufficient linear complementarity
  problems and their analyticity.
\newblock {\em Mathematical Programming}, 83(1):407--423, Jan 1998.

\bibitem{SW99}
J.~Stoer and M.~Wechs.
\newblock On the analyticity properties of infeasible-interior-point paths for
  monotone linear complementarity problems.
\newblock {\em Numerische Mathematik}, 81(4):631--645, Feb 1999.

\bibitem{Tarski51}
A.~Tarski.
\newblock {\em A Decision Method for Elementary Algebra and Geometry}.
\newblock University of California Press, Berkeley and Los Angeles, CA, USA,
  1951.
\newblock 2nd ed.

\bibitem{W78}
R.~J. Walker.
\newblock {\em Algebraic Curves}.
\newblock Springer, New York, NY, USA, 1978.

\bibitem{W00}
P.~G. Walsh.
\newblock A polynomial-time complexity bound for the computation of the
  singular part of a {Puiseux} expansion of an algebraic function.
\newblock {\em Mathematics of Computation}, 69(231):1167--1182, 2000.

\bibitem{WW10}
H.~Wei and H.~Wolkowicz.
\newblock Generating and measuring instances of hard semidefinite programs.
\newblock {\em Mathematical Programming}, 125(1):31--45, 2010.

\end{thebibliography}
\end{document}